\newtheorem{theorem}{Theorem}[section]
\newtheorem{conjecture}[theorem]{Conjecture}
\newtheorem{corollary}[theorem]{Corollary}
\newtheorem{definition}[theorem]{Definition}
\newtheorem{lemma}[theorem]{Lemma}
\newtheorem{proposition}[theorem]{Proposition}
\theoremstyle{remark}
\newtheorem{remark}[theorem]{Remark}
\numberwithin{equation}{section}
\newcommand{\ep}{\varepsilon}
\newcommand{\s}[1]{{\mathcal #1}}
\begin{document}

\title[Damped Wave Equations with Dynamic Boundary Conditions]{Attractors for Strongly Damped Wave Equations with Nonlinear Hyperbolic Dynamic Boundary Conditions}

\author[P. J. Graber and J. L. Shomberg]{P. Jameson Graber$^1$ and Joseph L. Shomberg$^2$}

\subjclass[2010]{Primary: 35B41, 35L71; Secondary: 35Q74, 35L20.}

\keywords{Hyperbolic dynamic boundary condition, semilinear hyperbolic equation, damped wave equation, global attractor, weak exponential attractor.}

\address{$^1$P. Jameson Graber, Naveen Jindal School of Management, The University of Texas at Dallas, 800 West Campbell Road, SM30, Richardson, Texas 75080, \\ {\tt{pjg140130@utdallas.edu}}}

\address{$^2$Joseph L. Shomberg, Department of Mathematics and Computer Science, Providence College, 1 Cunningham Square, Providence, Rhode Island 02918, USA, \\ {\tt{jshomber@providence.edu}}}

\date{\today}

\thanks{$^1$Research supported by NSF grant DMS-1303775.}

\begin{abstract}
We establish the well-posedness of a strongly damped semilinear wave equation equipped with nonlinear hyperbolic dynamic boundary conditions.
Results are carried out with the presence of a parameter distinguishing whether the underlying operator is analytic, $\alpha>0$, or only of Gevrey class, $\alpha=0$. 
We establish the existence of a global attractor for each $\alpha\in[0,1],$ and we show that the family of global attractors is upper-semicontinuous as $\alpha\rightarrow0.$
Furthermore, for each $\alpha\in[0,1]$, we show the existence of a weak exponential attractor.
A weak exponential attractor is a finite dimensional compact set in the weak topology of the phase space.
This result insures the corresponding global attractor also possess finite fractal dimension in the weak topology; moreover, the dimension is independent of the perturbation parameter $\alpha$.
In both settings, attractors are found under minimal assumptions on the nonlinear terms.
\end{abstract}

\maketitle
\tableofcontents

\section{Introduction}

Let $\Omega $ be a bounded domain in $\mathbb{R}^{3}$ with boundary $\Gamma:=\partial \Omega $ of class $C^{2}$. 
We consider the semilinear strongly damped wave equation, 
\begin{equation}
u_{tt} - \omega\Delta u_t + u_t - \Delta u + u + f(u) = 0 \quad \text{in} \quad (0,\infty) \times \Omega,  \label{pde}
\end{equation}
where $0<\omega\le1$ represents the diffusivity of the momentum.
The equation is endowed with the dynamic boundary condition, with $0\le\alpha\le 1,$
\begin{equation}
u_{tt} + \partial_{\mathbf{n}}(u+\omega u_t) - \alpha\omega\Delta_\Gamma u_t + u_t - \Delta_\Gamma u + u + g(u) = 0 \quad \text{on} \quad (0,\infty )\times \Gamma,  \label{bc}
\end{equation}
and with the initial conditions, 
\begin{equation}
u(0,x) = u_{0}(x), \quad u_{t}(0,x) = u_{1}(x) \quad \text{at} \quad \{0\}\times \Omega,  \label{ic-1}
\end{equation}
and
\begin{equation}
u_{\mid\Gamma}(0,x)=\gamma_0(x), \quad u_{t\mid\Gamma}(0,x)=\gamma_1(x) \quad \text{at} \quad \{0\}\times \Gamma.  \label{ic-2}
\end{equation}

For the nonlinear terms, we assume $f,g\in C(\mathbb{R})$ satisfies the sign conditions, 
\begin{align}
\liminf_{|s|\rightarrow \infty }\frac{f(s)}{s}>-1,  \label{assm-1} \\
\liminf_{|s|\rightarrow \infty }\frac{g(s)}{s}>-1,  \label{assm-2}
\end{align}
and we assume the growth assumptions hold, for all $r,s\in \mathbb{R}$, 
\begin{align}
|f(r)-f(s)| & \le \ell_1|r-s|\left(1+|r|^{2} + |s|^{2} \right),  \label{assm-3} \\
|g(r)-g(s)| & \le \ell_2|r-s|\left(1+|r|^{\rho-1}+|s|^{\rho-1}\right),  \label{assm-4}
\end{align}
for some positive constants $\ell_1,\ell_2$, and $2 \le \rho < \infty$.
We will refer to equations (\ref{pde})--(\ref{ic-2}) under assumptions (\ref{assm-1})--(\ref{assm-4}) as Problem {\textbf{P}}$_\alpha$, for $\alpha\in[0,1]$.

Problem {\textbf{P}}$_\alpha$ draws motivation from viscoelastic material; i.e., physical phenomena that exhibit both elasticity and viscosity when undergoing deformation. 
In models that approximate the behavior of non-Hookean materials under high strains, the term $-\Delta u_t$ not only indicates that the stress is proportional to the strain, but, in addition, the term communicates that the stress is proportional to the strain {\em{rate}}.
Thus, such terms appear when modeling viscoelastic materials such as Kelvin--Voigt type materials (cf. e.g. \cite[Section 4.9.2]{Hale88} and \cite[Section 13.10]{Meyers&Chawla_2008}).
In the present work, we allow the term $-\omega\Delta u_t$, $0<\omega\le1$, to appear in the sense of a strong damping perturbation to the (weakly) damped semilinear wave equation. 
Hence, equation (\ref{pde}) contains the perturbed (homogeneous) sine-Gordon equation,
\[
u_{tt} - \omega\Delta u_t + u_t - \Delta u + \sin u = 0, 
\]
used in modeling the evolution of the current $u$ in a Josephson junction (cf. \cite{Hale88,LSC82}).
Moreover, Problem {\textbf{P}}$_\alpha$ may be used to describe the perturbed Klein--Gordon wave equation appearing in quantum mechanics (cf. \cite[Section IV.3]{Temam88}),
\[
u_{tt} - \omega\Delta u_t + u_t - \Delta u + |u|^{\gamma-1}u = 0,
\]
for $1\le\gamma\le3.$

The damped wave equation,
\begin{equation}  \label{stda}
u_{tt}+Au_t+Bu=F(u),
\end{equation}
has been the topic of several important works, of which we will only mention a few. 
For standard Dirichlet--Laplacian operators, $A=-\omega\Delta$ and $B=-\Delta$, and any potential $F:D(-\Delta)=H^2(\Omega)\cap H^1_0(\Omega)\rightarrow L^2(\Omega)$ locally Lipschitz continuous, \cite{Webb80} established the global existence of strong solutions.
No differentiability assumption on $F$ is needed for the result; indeed, the result can be attributed to the local existence theorem for analytic semigroups (cf. e.g. \cite{Pazy83}). 
Carvalho and Cholewa \cite{Carvalho_Cholewa_02} show the existence of local weak solutions (in $H^1_0(\Omega)\times L^2(\Omega)$) to \eqref{stda} with $A^\theta=\omega(-\Delta)^{\theta}$, $\theta\in[\frac{1}{2},1]$, $B=-\Delta$, and $F\in C(\mathbb{R})$ satisfying
\begin{align}
|F(r)-F(s)| & \le \ell_1|r-s|\left(1+|r|^{\rho-1}+|s|^{\rho-1}\right),
\end{align}
where $1\le\rho\le\frac{n+2}{n-2}$ for $\Omega\subset\mathbb{R}^n$ bounded and smooth.
For course, of particular importance is the critical nonlinearity $\rho=5.$
To that end, the same authors prove in \cite{Carvalho_Cholewa_02-2} the global well-posedness for the problem associated to \eqref{stda}.
They also show the existence of compact global (universal) attractor in $H^1_0(\Omega)\times L^2(\Omega)$ with the aid of the dissipation assumption \eqref{assm-1}.
For this result, the case $\theta=1$ is important because the associated operator,
\begin{equation}  \label{st-op}
\mathcal{A^\theta}:=\begin{pmatrix} 0 & -I \\ B & A^\theta \end{pmatrix},
\end{equation}
does not possesses a compact resolvent (cf. \cite[Proposition 1]{Carvalho_Cholewa_02}). 
It is certainly worth mentioning the work \cite{Chen&Triggiani90} who show that the operator \eqref{st-op} is of Gevrey class for $\theta\in(0,\frac{1}{2}).$
Pata and Squassina prove in \cite{Pata&Squassina05} that the subcritical problem with $\theta=1$ admits an exponential attractor of optimal regularity in the standard energy phase space.
In \cite{Pata&Zelik06}, the authors Pata and Zelik show the problem with critical and supercritical nonlinearities also admit global attractors with optimal regularity.
In the present work, we do not consider the critical case, rather, for the existence of a local (mild) solution will rely on classical semigroup theory.
Additionally, our results are presented for the $\theta=1$ setting. 

Much of the present literature on the strongly damped wave equation contains only the case of the Dirichlet boundary condition. 
Of course, sometimes only trivial modification are required to reproduce the results for Neumann, Robin, or periodic boundary conditions. 
However, it is becoming increasingly apparent that the importance of so-called {\em{dynamic}} boundary conditions be considered and developed in future mathematical models. 
The damped wave equation with dissipation appearing (in the sense of fractional damping) on the boundary appears in \cite{Wu&Zheng06}.
Since this result appeared, it has become physically relevant and hence important to adopt dynamic boundary conditions.  
A source of special emphasis on this front appears quite naturally in the analysis of process of spinodial decomposition, relevant to the Cahn--Hilliard equations.
We quote \cite{GMS2010}:
\begin{quote}
In most works, the equations are endowed with Neumann boundary conditions for both [unknowns] $u$ and $w$ (which means that the interface is orthogonal to the boundary and that there is no mass flux at the boundary) or with periodic boundary conditions. Now, recently, physicists have introduced the so-called dynamic boundary conditions, in the sense that the kinetics, i.e., $\partial_t u$, appears explicitly in the boundary conditions, in order to account for the interaction of the components with the walls for a confined system.
\end{quote}
The dynamic boundary condition present in \eqref{bc} is of hyperbolic type.
A related hyperbolic boundary condition---though not exhibiting surface diffusion---appears in \cite{Graber_Said-Houari_2012}. 
The present Problem {\textbf{P}}$_\alpha$, $\alpha\in[0,1]$, is the strongly damped perturbation of the (weakly) damped semilinear wave equation.
It describes the dynamics of a wave in a bounded domain under the influence of dissipative effects. 
In our model, the importance of the momentum at the boundary of the domain is not neglected; hence, an important dynamical feature of our model is the effect of the dynamical flux present on the boundary; i.e., the $\partial_{\mathbf{n}}u_t$ term which describes the evolution of the surface (tangential) gradient of the velocity component on $\Gamma$.
For example, Problem {\textbf{P}}$_\alpha$, $\alpha\in[0,1]$, may describe a gas experiencing irrotational forces from a rest state in a domain $\Omega$. 
The surface $\Gamma$, now governed by its own wave equation, acts as a locally reacting dissipation mechanism in response to excess pressure in $\Omega$. 
Hence, (\ref{bc}) describe $\Gamma$ as a so-called locally reactive surface. 

The main results in this paper are:

\begin{itemize}

\item For each $\alpha\in[0,1],$ we establish the existence and uniqueness of global mild solutions under only minimal assumptions on the nonlinear terms.

\item The global mild solutions generate a locally Lipschitz continuous semiflow on the standard energy phase space.

\item For each $\alpha\in[0,1]$, the semiflow admits a bounded absorbing set, bounded in the phase space independent of the parameter $\alpha$.

\item The semiflow admits a family of global attractors for each $\alpha\in[0,1].$
The required asymptotic compactness for the semiflow is established using a suitable $\alpha$-contraction argument.
We show that the family of global attractors is upper-semicontinuous with respect to the parameter $\alpha$ as $\alpha\rightarrow0.$

\item Finally, for each $\alpha\in[0,1]$, the existence of a so-called weak exponential attractor is proven.
This result guarantees the finite (fractal) dimension of the global attractors in the weak topology. The dimension is uniform in $\alpha.$

\end{itemize}

{\textbf{Notation and conventions.}} We take the opportunity here to introduce some notations and conventions that are used throughout the paper. 
Norms in the associated space are clearly denoted $\|\cdot\|_{B}$ where $B$ is the corresponding Banach space. 
We use the notation $\langle \cdot ,\cdot \rangle_{H}$ to denote the inner product on the Hilbert space $H$.
In many calculations, functional notation indicating dependence on the variable $t$ is dropped; for example, we will write $u$ in place of $u(t)$. 
Throughout the paper, $C>0$ will denote a \emph{generic} constant, while $Q:\mathbb{R}_{+}\rightarrow \mathbb{R}_{+}$ will denote a \emph{generic} increasing function. 
All these quantities may depend on various structural parameters, however, unless explicitly stated, they are \emph{independent} of the perturbation parameter $\alpha$.
Let $\lambda_\Omega>0$ denote the best constant satisfying the Sobolev/Poincar\'{e} inequality in $\Omega$,
\begin{equation}  \label{Poincare}
\lambda_\Omega \int_\Omega u^2 dx \le \int_\Omega\left( |\nabla u|^2+u^2 \right) dx.
\end{equation}
We will also rely on the Laplace--Beltrami operator $-\Delta_\Gamma$ on the surface $\Gamma.$ 
This operator is positive definite and self-adjoint on $L^2(\Gamma)$ with domain $D(-\Delta_\Gamma)$.
The Sobolev spaces $H^s(\Gamma)$, for $s\in\mathbb{R}$, may be defined as $H^s(\Gamma)=D((-\Delta_\Gamma)^{s/2})$ when endowed with the norm whose square is given by, for all $u\in H^s(\Gamma)$,
\begin{equation}  \label{LB-norm}
\|u\|^2_{H^s(\Gamma)} := \|u\|^2_{L^2(\Gamma)} + \left\|(-\Delta_\Gamma)^{s/2}u\right\|^2_{L^2(\Gamma)}.
\end{equation}
On the boundary, let $\lambda_\Gamma>0$ denote the best constant satisfying the Sobolev/Poincar\'{e} inequality on $\Gamma$,
\begin{equation}  \label{bndry-Poincare}
\lambda_\Gamma \int_\Gamma u^2 d\sigma \le \int_\Gamma\left( |\nabla_\Gamma u|^2+u^2 \right) d\sigma.
\end{equation}

Throughout the paper, the reader should be mindful that the results contained here belong to two classes corresponding to the ``analytic'' Problem {\textbf{P}}$_\alpha$, where $\alpha\in(0,1]$, and to the ``almost analytic'' or Gevrey Problem {\textbf{P}}$_0$, when $\alpha=0$.
As for the plan of the paper, in Section \ref{s:functional}, we review the functional setting and framework for the abstract model problem; in particular, the standard energy phase space is introduced and the appropriate semigroups associated with Problem {\textbf{P}}$_\alpha$ and Problem {\textbf{P}}$_0$ are discussed. 
In Section \ref{s:sf} we determine various properties of the solutions to Problem \textbf{P}$_\alpha$ and Problem {\textbf{P}}$_0$. 
The well-posedness of Problem {\textbf{P}}$_\alpha$ is obtained by virtue of the analyticity of the underlying operator; i.e., it is the infinitesimal generator of an analytic semigroup, whereas the well-posedness of Problem {\textbf{P}}$_0$ relies on the fact that the underlying operator is known to generate a $C_0$-semigroup of contractions of Gevrey class $\delta$, for $\delta>2.$ 
The above assumptions on the nonlinear terms define a locally Lipschitz functional on the standard energy phase space. 
Together, semigroup theory provides the existence of local mild solutions which are readily found to be globally defined. 
Uniqueness of the solutions follows from a continuous dependence estimate utilizing sharp Sobolev embeddings.
The mild solutions generate a locally Lipschitz continuous semiflow, uniformly in $t$ on compact intervals, on the standard energy phase space. 
In Section \ref{s:dissipativity} we prove the existence of a bounded absorbing set admitted by the semiflow. 
This result holds for all problems $\alpha\in[0,1]$ with a bound independent of $\alpha$.
In order to establish the existence of a global attractor for each $\alpha\in[0,1],$ it suffices to prove the associated semiflows are precompact.
As in \cite[Proposition 1]{Carvalho_Cholewa_02} when $\theta=1$ (which also occurs in our case), the semigroup of solution operators is {\em{not}} compact; which in turn means we cannot rely on the regularizing effects of the solution operators to obtain the required (pre)compactness.
Instead, to obtain the precompactness of the solution operators, we rely on the method of $\alpha$-contractions (cf. \cite{Chow_Hale_1974} and the references therein).
Finally, in Section \ref{s:exp-attr} we prove the existence of weak exponential attractors which are compact in the weak topology and bounded in the standard phase space.
Using this result, we are able to show the global attractors possess finite fractal dimension only in the weak topology.
The final Section \ref{s:conclusions} contains some remarks and notes for future research, as well as some observations and conjectures on the characterization of the domain for the fractional powers of the operator $A_\alpha$ when $\alpha\in(0,1]$. 

\section{Functional setting}  \label{s:functional}

We begin with the consequences of the assumptions made on the nonlinear terms.
From assumption (\ref{assm-1}) and the definition of the $H^1(\Omega)$ norm, it follows that, for some constants $\mu_1 \in (0,1]$ and $c_1=c_1(f,|\Omega|)\ge 0$, and for all $\xi \in H^1(\Omega)$,
\begin{align}
\langle f(\xi), \xi \rangle_{L^2(\Omega)} & \ge -(1 - \mu_1) \|\xi\|^2_{L^2(\Omega)} - c_1  \notag \\ 
& \ge -(1 - \mu_1) \|\xi\|^2_{H^1(\Omega)} - c_1.  \label{cons-3}
\end{align}
For some constant $c_2=c_2(f,|\Omega |)\ge 0$, and for all $\xi \in L^2(\Omega)$, 
\begin{align} \int_\Omega F(\xi) dx & \ge
-\frac{1-\mu_1}{2}\|\xi\|^2_{L^2(\Omega)} - c_2  \notag \\ 
& \ge -\frac{1-\mu_1}{2}\|\xi\|^2_{H^1(\Omega)} - c_2,  \label{cons-4}
\end{align}
where $F(s)=\int_0^s f(\xi) d\xi$.
Notice that (\ref{assm-3}) and (\ref{assm-4}) imply, when we fix $s=0$, that for all $r\in\mathbb{R}$, 
\begin{align}
|f(r)| & \le \ell_1 \left( |r| + |r|^{3} \right) + |f(0)|,  \label{afix-1} \\
|g(r)| & \le \ell_2 \left( |r| + |r|^{\rho} \right) + |g(0)|.  \label{afix-2}
\end{align}
Together (\ref{assm-1}), (\ref{Poincare}), (\ref{afix-1}), and the continuous embedding $H^1(\Omega) \hookrightarrow L^6(\Omega)$ give the upper-bounds, for all $\xi\in H^1(\Omega),$
\begin{align} 
\int_\Omega F(\xi) dx & \le \ell_1 \left( \|\xi\|^2_{L^2(\Omega)} + \|\xi\|^6_{L^6(\Omega)} \right) + |f(0)|\|\xi\|_{L^1(\Omega)} \notag \\
& \le C \left( \|\xi\|^2_{H^1(\Omega)} + \|\xi\|^6_{H^1(\Omega)} + \|\xi\|_{H^1(\Omega)} \right),  \label{cons-5}
\end{align}
where $C=C(\ell_1,\lambda_\Omega,\Omega,f).$
Reflecting the above estimates, due to (\ref{assm-2}) and (\ref{LB-norm}), there are constants $\mu_2\in(0,1]$, $c_3, c_4\ge0$ such that for all $\xi\in L^2(\Gamma),$
\begin{align}  \label{cons-6}
\langle g(\xi),\xi \rangle_{L^2(\Gamma)} & \ge -(1-\mu_2)\|\xi\|^2_{L^2(\Gamma)} - c_3  \notag \\ 
& \ge -(1-\mu_2)\|\xi\|^2_{H^1(\Gamma)} - c_3,
\end{align}
\begin{equation}  \label{cons-7}
\int_\Gamma G(\xi) d\sigma \ge -\frac{1-\mu_2}{2}\|\xi\|^2_{L^2(\Gamma)} - c_4,
\end{equation}
where $G(s)=\int_0^s g(\xi) d\xi$ and $d\sigma$ represents the natural surface measure on $\Gamma$, and with (\ref{assm-2}), (\ref{bndry-Poincare}), (\ref{afix-2}), and the embedding $H^1(\Gamma)\hookrightarrow L^p(\Gamma)$, we also find, for all $\xi\in H^1(\Gamma),$ 
\begin{align} 
\int_\Omega G(\xi) d\sigma & \le \ell_2 \left( \|\xi\|^2_{L^2(\Gamma)} + \|\xi\|^{\rho+1}_{L^{\rho+1}(\Gamma)} \right) + |g(0)|\|\xi\|_{L^1(\Gamma)}  \notag \\
& \le C \left( \|\xi\|^2_{H^1(\Gamma)} + \|\xi\|^{\rho+1}_{H^1(\Gamma)} + \|\xi\|_{H^1(\Gamma)} \right),  \label{cons-8}
\end{align}
where here $C=C(\ell_2,\lambda_\Gamma,\Gamma,g).$

The ``standard energy'' phase space and abstract formulation for Problem {\textbf{P}}$_\alpha$, $\alpha\in[0,1]$, are now given. 
Let 
\[
\mathcal{H}_0 := H^1(\Omega) \times L^2(\Omega) \times H^1(\Gamma) \times L^2(\Gamma).
\]
The space $\mathcal{H}_0$ is Hilbert with the norm whose square is given by, for $\zeta=(u,v,\gamma,\delta)\in\mathcal{H}_0$,
\begin{align}
\|\zeta\|^2_{\mathcal{H}_0} & := \|u\|^2_{H^1(\Omega)} + \|v\|^2_{L^2(\Omega)} + \|\gamma\|^2_{H^1(\Gamma)} + \|\delta\|^2_{L^2(\Gamma)}   \notag \\ 
& = \left( \|\nabla u\|^2_{L^2(\Omega)} + \|u\|^2_{L^2(\Omega)} \right) + \|v\|^2_{L^2(\Omega)} + \left( \|\nabla_\Gamma\gamma\|^2_{L^2(\Gamma)} + \|\gamma\|^2_{L^2(\Gamma)} \right) + \|\delta\|^2_{L^2(\Gamma)}.   \notag
\end{align}
Before we continue, let us now recall that the Dirichlet trace map, $tr_{D}:C^{\infty}\left( \overline{\Omega }\right) \rightarrow C^{\infty }\left( \Gamma\right),$ defined by $tr_{D}\left(u\right)=u_{\mid\Gamma},$
extends to a linear continuous operator $tr_{D}:H^{r}\left( \Omega \right) \rightarrow H^{r-1/2}\left( \Gamma \right) ,$ for all $r>\frac{1}{2}$, which is surjective for $\frac{1}{2}<r<\frac{3}{2}.$ 
This map also possesses a bounded right inverse $tr_{D}^{-1} : H^{r-1/2}\left( \Gamma \right) \rightarrow H^{r}\left( \Omega \right) $ such that $tr_{D}(tr_{D}^{-1}\psi)=\psi,$ for any $\psi \in H^{r-1/2}\left(\Gamma \right)$.

We find it worthwhile to repeat \cite[Remark 1.1]{Graber&Lasiecka14}:

\begin{remark}
In the space $\mathcal{H}_0$, the trace of $u_t(0)\in L^2(\Omega)$ is not well-defined in $L^2(\Gamma).$
This also means that we cannot identify the second and fourth components of some $\zeta_0=(u_0,v_0,\gamma_0,\delta_0)\in\mathcal{H}_0$ through the trace. 
However, we will see that along trajectories of Problem {\textbf{P}}$_\alpha$, $\alpha\in[0,1],$ when $t>0$ the identification $tr_D(u_t(t))=u_{t\mid\Gamma}(t)$ is allowed. 
The instantaneous regularization of the solutions to Problem {\textbf{P}}$_\alpha$, $\alpha\in[0,1]$, guarantees the trace is well-defined.
\end{remark}

For any $\alpha\in[0,1]$, $\omega\in(0,1]$, let
\begin{align}
D(A_{\alpha}):=\left\{ \zeta=(u,v,\gamma,\delta)\in \mathcal{H}_0 : v\in H^1(\Omega), ~v_{\mid\Gamma}\in H^1(\Gamma), ~\Delta(u+\omega v)\in L^2(\Omega), \right.  \notag \\ 
\left. ~\partial_{\mathbf{n}}(u+\omega v)_{\mid\Gamma}-\Delta_\Gamma(u_{\mid\Gamma}+\alpha\omega v_{\mid\Gamma})\in L^2(\Gamma), ~\gamma=u_{\mid\Gamma}, ~\delta=v_{\mid\Gamma} ~\text{on}~\Gamma \right\},  \label{domain}
\end{align}
and define the linear unbounded operator $A_{\alpha}:D(A_{\alpha})\subset\mathcal{H}_0\rightarrow\mathcal{H}_0$ by
\begin{equation}  \label{operator}
A_{\alpha}:=\begin{pmatrix} 0 & I & 0 & 0 \\ \Delta-I & \omega\Delta-I & 0 & 0 \\ 0 & 0 & 0 & I \\ -\partial_{\mathbf{n}} & -\omega\partial_{\mathbf{n}} & \Delta_\Gamma-I & \alpha\omega\Delta_\Gamma-I \end{pmatrix}.
\end{equation}

\begin{lemma}  \label{t:operator-A}
For each $\alpha\in[0,1]$, the operator $A_{\alpha}:D(A_{\alpha})\subset\mathcal{H}_0\rightarrow\mathcal{H}_0$ given in (\ref{domain})--(\ref{operator}) is closed, densely defined, and is the infinitesimal generator of a $C_0$-semigroup of contractions in $\mathcal{H}_0.$
We denote this semigroup by $e^{A_{0}t}$ when $\alpha=0$, or by $e^{A_{\alpha}t}$ for $\alpha\in(0,1]$.

\begin{enumerate}

\item When $\alpha=0$, the semigroup $e^{A_{0}t}$ is of Gevrey class $\delta$ for all $\delta>2$.
More specifically, for $C$ and $R$ sufficiently large, we have the following estimates:
\begin{align}
\|A_0e^{A_0t}\|_{\mathcal{L(H}_0)} &\leq \frac{C}{t^2}, \quad \forall  t > 0,  \notag \\
\|R(i\beta,A_0)\|_{\mathcal{L(H}_0)} &\leq \frac{C}{|\beta|^{1/2}}, \quad \forall  \beta \in (-\infty,-R) \cup (R,\infty).  \notag
\end{align}

\item When $\alpha\in(0,1]$, the semigroup $e^{A_{\alpha}t}$ is analytic in $\mathcal{H}_0.$ 

\end{enumerate}

\end{lemma}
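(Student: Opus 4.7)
The plan is to verify the hypotheses of the Lumer--Phillips theorem to obtain the $C_0$-semigroup of contractions, and then to establish resolvent estimates along the imaginary axis that characterize Gevrey (when $\alpha=0$) and analytic (when $\alpha\in(0,1]$) semigroups on a Hilbert space.

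First, I would check that $D(A_\alpha)$ is dense: smooth functions on $\overline{\Omega}$ restricted to $\Gamma$ along with their counterparts from $\Omega$ satisfying the trace identifications $\gamma = u_{|\Gamma}$, $\delta = v_{|\Gamma}$ form a dense subset of $\mathcal{H}_0$. Closedness of $A_\alpha$ follows from elliptic regularity, because the distributional quantities $\Delta(u+\omega v)$ and $\partial_{\mathbf{n}}(u+\omega v) - \Delta_\Gamma(u_{|\Gamma}+\alpha\omega v_{|\Gamma})$ are stable under $\mathcal{H}_0$-convergence of the components when the images converge in $\mathcal{H}_0$.

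For dissipativity, given $\zeta=(u,v,\gamma,\delta)\in D(A_\alpha)$ with $\gamma=u_{|\Gamma}$ and $\delta=v_{|\Gamma}$, I compute $\langle A_\alpha\zeta,\zeta\rangle_{\mathcal{H}_0}$ using Green's identity on $\Omega$ (which produces boundary terms $\langle \partial_{\mathbf{n}}(u+\omega v),\delta\rangle_\Gamma$ that exactly cancel against the conormal contribution in the fourth component) together with integration by parts on the closed manifold $\Gamma$. After the symmetric cross-terms cancel, what remains is
\[
\mathrm{Re}\,\langle A_\alpha\zeta,\zeta\rangle_{\mathcal{H}_0} = -\omega\|\nabla v\|^2_{L^2(\Omega)} - \|v\|^2_{L^2(\Omega)} - \alpha\omega\|\nabla_\Gamma\delta\|^2_{L^2(\Gamma)} - \|\delta\|^2_{L^2(\Gamma)} \le 0.
\]
To obtain surjectivity of $I-A_\alpha$, for $\eta=(f_1,f_2,f_3,f_4)\in\mathcal{H}_0$ I eliminate $v=u-f_1$ and $\delta=\gamma-f_3$ (with $\gamma=u_{|\Gamma}$), reducing the problem to an elliptic system for $u$ posed variationally on the Hilbert space $V:=\{u\in H^1(\Omega) : u_{|\Gamma}\in H^1(\Gamma)\}$ with bilinear form
\[
a(u,\phi):= (1+\omega)\int_\Omega \nabla u\cdot\nabla\phi\,dx + 3\int_\Omega u\phi\,dx + (1+\alpha\omega)\int_\Gamma \nabla_\Gamma u\cdot\nabla_\Gamma \phi\,d\sigma + 3\int_\Gamma u\phi\,d\sigma.
\]
This $a$ is clearly continuous and coercive on $V$, so Lax--Milgram produces a unique $u\in V$, and elliptic regularity places the resulting $(u,v,\gamma,\delta)$ in $D(A_\alpha)$. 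The Lumer--Phillips theorem then gives the claimed $C_0$-semigroup of contractions.

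For the two regularity statements, I would invoke the standard Hilbert-space characterizations: $e^{A_\alpha t}$ is analytic iff $\|R(i\beta,A_\alpha)\|_{\mathcal{L}(\mathcal{H}_0)}\le C/|\beta|$ for $|\beta|$ large, while $e^{A_0 t}$ is of Gevrey class $\delta$ for all $\delta>2$ whenever $\|R(i\beta,A_0)\|_{\mathcal{L}(\mathcal{H}_0)}\le C/|\beta|^{1/2}$ for large $|\beta|$ (following Chen--Triggiani \cite{Chen&Triggiani90}); the time-domain estimate $\|A_0 e^{A_0 t}\|\le C/t^2$ is then extracted from the contour integral representation. To prove the resolvent bounds, I would write $(i\beta I - A_\alpha)\zeta = \eta$ componentwise, take the real part paired with $\zeta$ to recover the dissipation above (thereby controlling $v$ in $H^1(\Omega)$ and $\delta$ in $L^2(\Gamma)$, plus $\delta$ in $H^1(\Gamma)$ when $\alpha>0$), and then, using the relations $v = i\beta u - \eta_1$ and $\delta = i\beta\gamma - \eta_3$, test the remaining equations against $u$ and $\gamma$ to recover the $H^1(\Omega)$ and $H^1(\Gamma)$ norms of $u,\gamma$ with appropriate $\beta$-powers.

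The main obstacle is the Gevrey estimate when $\alpha=0$: without $H^1(\Gamma)$ dissipation on $\delta$, one cannot hope for the full $1/|\beta|$ decay, and the half-power $1/|\beta|^{1/2}$ arises from a delicate interplay between the interior strong damping $\omega\Delta v$ and the undamped tangential second-order operator on $\Gamma$. The boundary multiplier must be chosen so that the boundary terms produced by eliminating $v$ are absorbed into the available interior dissipation with exactly this half-power loss, which is the technical core of the result and essentially the content of \cite{Chen&Triggiani90} adapted to the hyperbolic dynamic boundary setting.
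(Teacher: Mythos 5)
Your overall strategy---Lumer--Phillips for generation, then frequency-domain resolvent bounds on the imaginary axis to get analyticity for $\alpha>0$ and Gevrey class for $\alpha=0$---is exactly the strategy behind this lemma. The paper itself does not carry it out: the proof is a citation to \cite{Graber&Lasiecka14} (Proposition 2.2 and Theorem 1.5 there), with the remark that the only change is that the static damping terms replace the Poincar\'e inequality in controlling the full $H^1$ norms. Your dissipativity computation is correct and reproduces the identity recorded in Remark \ref{r:adjoint}, and the Lax--Milgram reduction for the range condition is the right device (one caveat: after eliminating $v=u-f_1$ and $\delta=\gamma-f_3$, membership in $D(A_\alpha)$ forces $\delta=v_{\mid\Gamma}$, hence $f_3=f_{1\mid\Gamma}$; this is consistent only if the first and third components of $\mathcal{H}_0$ are identified through the trace, an identification the paper leaves implicit but which you should state before claiming surjectivity onto all of $\mathcal{H}_0$).

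The genuine gap is in part (1). The estimate $\|R(i\beta,A_0)\|_{\mathcal{L}(\mathcal{H}_0)}\le C|\beta|^{-1/2}$ \emph{is} the content of the Gevrey statement, and your proposal never establishes it: you correctly identify that the half power must come from trading the undamped tangential operator on $\Gamma$ against the interior damping $\omega\Delta v$, but you do not exhibit the multiplier or the interpolation step that produces $|\beta|^{-1/2}$ rather than some weaker power, and you explicitly label this ``the technical core.'' Deferring it to \cite{Chen&Triggiani90} does not close the gap, because Chen--Triggiani treat abstract elastic systems of the form $\ddot x+A^{1/2}BA^{1/2}\dot x+Ax=0$ with no boundary dynamics; the operator here couples an interior wave equation to a second hyperbolic equation on $\Gamma$ through the flux $\partial_{\mathbf{n}}(u+\omega u_t)$, and carrying the $|\beta|^{-1/2}$ bound over to this coupled system is precisely the nontrivial work done in \cite{Graber&Lasiecka14}, which is why the authors cite that paper rather than Chen--Triggiani. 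To make your argument self-contained you would either have to reproduce that resolvent analysis (including the verification that the static $+u$, $+u_t$ terms supply the coercivity that the Poincar\'e inequality supplies in \cite{Graber&Lasiecka14}) or cite \cite{Graber&Lasiecka14} directly, as the paper does. The remaining pieces of your sketch (the equivalence of analyticity with the $C/|\beta|$ bound for $\alpha>0$, and the passage from the $|\beta|^{-1/2}$ bound to $\|A_0e^{A_0t}\|\le Ct^{-2}$ via the contour representation) are standard and correctly invoked.
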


\begin{proof}
By the proof of \cite[Proposition 2.2]{Graber&Lasiecka14}, we see that, for each $\alpha\in[0,1],$ the operator $A_\alpha$
is closed and densely defined, and it generates a $C_0$-semigroup of contractions on $\mathcal{H}_0$.
Then by the proof of \cite[Theorem 1.5]{Graber&Lasiecka14}, we also find that the semigroup $e^{A_{0}t}$ is of Gevrey class $\delta$ for $\delta>2$, and the semigroup $e^{A_{\alpha}t}$ is analytic in $\mathcal{H}_0$. 
The only difference is that, whereas in \cite{Graber&Lasiecka14} the Poincar\'e inequality is valid so that the norm $\|u\|_{H^1(\Omega)}$ is equivalent to $\|\nabla u\|_{L^2(\Omega)}$, in our case we use the additional ``static damping" terms to get control of the full $H^1(\Omega)$ norm of $u$.
All necessary adjustments are therefore trivial.
This completes the proof.
\end{proof}

The fractional powers of $A_0$ are defined here in anticipation of Theorem \ref{t:sg-theory} below.
(Fractional powers of $A_\alpha$, $\alpha\in(0,1]$ are discussed in Section \ref{s:conclusions}.)
For any $\theta>0$, we define the fractional power of the linear operator $A_0^\theta$ as follows (cf. e.g. \cite[II.5.c, in particular, Definition 5.31]{Engel&Nagel00}): first let $\Sigma$ be an open sector in $\mathbb{C}$ such that $\mathbb{R}_+\subset\Sigma\subset\rho(A_0)$. 
Next, define 
\[
A^{-\theta}_0:=\frac{1}{2\pi i}\int_\gamma \lambda^{-\theta}R(\lambda,A_0)d\lambda,
\]
where $\gamma$ is any piecewise smooth path in $\Sigma$ connecting $\infty e^{-i\phi}$ to $\infty e^{i\phi}$, for some $\phi>0$.
Then, for $\theta>0$, the operator $A^\theta_0$ is defined as the inverse of $A_0^{-\theta}$ with $D(A_0^\theta)={\mathrm{Range}}(A_{0}^{-\theta})$.

\begin{remark}  \label{r:adjoint}
For all $\alpha\in[0,1]$, the operator $A_{\alpha}$ is dissipative on $\mathcal{H}_0.$
Indeed, for any $\zeta=(u,v,\gamma,\delta)\in\mathcal{H}_0,$
\begin{align}
\left\langle A_{\alpha}\zeta,\zeta \right\rangle_{\mathcal{H}_0} & = -\omega\|\nabla v\|^2_{L^2(\Omega)} - \|v\|^2_{L^2(\Omega)} - \alpha\omega\|\nabla_\Gamma\delta\|^2_{L^2(\Gamma)} - \|\delta\|_{L^2(\Gamma)}.  \notag
\end{align}
Moreover, if we define an operator $A^*_{\alpha}$ with domain
\begin{align}
D(A^*_{\alpha}) := \left\{ \theta=(\chi,\psi,\phi,\xi)\in \mathcal{H}_0 : \psi\in H^1(\Omega), ~\psi_{\mid\Gamma}\in H^1(\Gamma), ~\Delta(\chi-\omega\psi)\in L^2(\Omega), \right.  \notag \\ 
\left. ~\partial_{\mathbf{n}}(\chi-\omega\psi)_{\mid\Gamma}-\Delta_\Gamma(\chi_{\mid\Gamma}-\alpha\omega\psi_{\mid\Gamma})\in L^2(\Gamma), ~\phi=\chi_{\mid\Gamma}, ~\xi=\psi_{\mid\Gamma} ~\text{on}~ \Gamma \right\},  \notag
\end{align}
for all $\alpha\in(0,1]$ and $\omega\in(0,1]$ by, 
\[
A^*_{\alpha}:=\begin{pmatrix} 0 & -I & 0 & 0 \\ -\Delta+I & \omega\Delta-I & 0 & 0 \\ 0 & 0 & 0 & -I \\ -\partial_{\mathbf{n}} & \omega\partial_{\mathbf{n}} & -\Delta_\Gamma+I & \alpha\omega\Delta_\Gamma-I \end{pmatrix},
\]
then there holds,
\[
\left\langle A_{\alpha}\zeta,\theta \right\rangle_{\mathcal{H}_0} - \left\langle \zeta, A^*_{\alpha}\theta \right\rangle_{\mathcal{H}_0} = \langle \partial_{\mathbf{n}}(u+\omega v),\psi-\xi \rangle_{L^2(\Gamma)} - \langle  v-\delta,\partial_{\mathbf{n}}(\chi-\omega\psi) \rangle_{L^2(\Gamma)}.
\]
Hence, in the limit $\omega=0$, we see the operator $\{A^*_{\alpha}\}_{\mid\omega=0}$ is the adjoint of the operator $\{A_{\alpha}\}_{\mid\omega=0}$ associated with the weakly damped wave equation endowed with nonlinear hyperbolic boundary conditions.
It is important to note that when any operator $A$ possesses an explicit adjoint $A^*$, then mild solutions---coming directly from the solution methods of semigroup theory---are in fact equivalent to the variational formulation of weak solutions.
For more on this, see \cite[Section 3]{Ball04}.
\end{remark}

Finally, define the map $\mathcal{F}:\mathcal{H}_0\rightarrow\mathcal{H}_0$ by 
\[
\mathcal{F}(\zeta):=\begin{pmatrix} 0 \\ -f(u) \\ 0 \\ -g(\gamma) \end{pmatrix}
\]
for all $\zeta=(u,v,\gamma,\delta)\in\mathcal{H}_0$.
Then Problem {\textbf{P}}$_\alpha$, $\alpha\in[0,1]$, may be put into the abstract form in $\mathcal{H}_0,$
\begin{equation}   \label{abstract-p}
\left\{
\begin{array}{ll} \displaystyle\frac{d\zeta}{dt} = A_{\alpha}\zeta + \mathcal{F}(\zeta) & \text{for}\ t>0, \\ 
\zeta(0)=\zeta_0, \end{array}
\right.
\end{equation}
where $\zeta=\zeta(t)=(u(t),u_t(t),u_{\mid\Gamma}(t),u_{t\mid\Gamma}(t))$ and $\zeta_0=(u_0,u_1,\gamma_0,\gamma_1)\in\mathcal{H}_0$, now where $v=u_t$ and $\delta=\gamma_t$ in the sense of distributions.

The following result will be used to obtain local mild solutions for Problem {\textbf{P}}$_\alpha$, $\alpha\in[0,1]$, in the next section.

\begin{lemma}  \label{t:operator-F}
Under the assumptions (\ref{assm-3})--(\ref{assm-4}), the map $\mathcal{F}:\mathcal{H}_0\rightarrow \mathcal{H}_0$ is locally Lipschitz continuous. 
\end{lemma}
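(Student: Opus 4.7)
The plan is to reduce the statement to two scalar estimates, one on $\Omega$ and one on $\Gamma$, and then exploit the matching between the growth exponents in (\ref{assm-3})--(\ref{assm-4}) and the Sobolev embeddings available in the three-dimensional bulk and on the two-dimensional boundary. Since only the second and fourth components of $\mathcal{F}(\zeta)$ are nonzero, for $\zeta_i=(u_i,v_i,\gamma_i,\delta_i)\in\mathcal{H}_0$, $i=1,2$, we have
\[
\|\mathcal{F}(\zeta_1)-\mathcal{F}(\zeta_2)\|_{\mathcal{H}_0}^2 = \|f(u_1)-f(u_2)\|_{L^2(\Omega)}^2 + \|g(\gamma_1)-g(\gamma_2)\|_{L^2(\Gamma)}^2,
\]
so it suffices to estimate each term.

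For the bulk term, I would square the pointwise bound (\ref{assm-3}) and apply H\"older's inequality with conjugate exponents $3$ and $3/2$ to the integral
\[
\int_\Omega |u_1-u_2|^2\bigl(1+|u_1|^2+|u_2|^2\bigr)^2\,dx \le \|u_1-u_2\|_{L^6(\Omega)}^2\bigl\||\Omega|^{2/3}+\|u_1\|_{L^6(\Omega)}^4+\|u_2\|_{L^6(\Omega)}^4\bigr).
\]
The critical three-dimensional embedding $H^1(\Omega)\hookrightarrow L^6(\Omega)$ then yields
\[
\|f(u_1)-f(u_2)\|_{L^2(\Omega)} \le C\bigl(1+\|u_1\|_{H^1(\Omega)}^2+\|u_2\|_{H^1(\Omega)}^2\bigr)\|u_1-u_2\|_{H^1(\Omega)},
\]
which is exactly the local Lipschitz estimate we need in the first component.

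For the boundary term, the same H\"older strategy applies, but now the exponent $\rho$ in (\ref{assm-4}) is arbitrary. Here I would use the fact that $\Gamma$ is a compact two-dimensional manifold, so $H^1(\Gamma)\hookrightarrow L^p(\Gamma)$ for every $p\in[1,\infty)$. Choosing, say, conjugate H\"older exponents $(2,2)$ on the pair $|\gamma_1-\gamma_2|^2$ and $(1+|\gamma_1|^{\rho-1}+|\gamma_2|^{\rho-1})^2$ and using the embedding with $p=4$ for the first factor and $p=4(\rho-1)$ for the second, one obtains
\[
\|g(\gamma_1)-g(\gamma_2)\|_{L^2(\Gamma)} \le C\bigl(1+\|\gamma_1\|_{H^1(\Gamma)}^{\rho-1}+\|\gamma_2\|_{H^1(\Gamma)}^{\rho-1}\bigr)\|\gamma_1-\gamma_2\|_{H^1(\Gamma)}.
\]
Adding the two estimates and noting that $\|u_i-u_j\|_{H^1(\Omega)}$ and $\|\gamma_i-\gamma_j\|_{H^1(\Gamma)}$ are each controlled by $\|\zeta_i-\zeta_j\|_{\mathcal{H}_0}$ completes the argument.

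There is no real obstacle: the content of the lemma is just the exact match between (\ref{assm-3}) and the three-dimensional critical exponent, together with the fact that the two-dimensional boundary imposes no restriction on $\rho$. The only point worth care is choosing the H\"older exponents on $\Gamma$ so that the constants depend only on the $H^1(\Gamma)$ norms of the boundary traces; this is guaranteed by the supercritical Sobolev embedding on the surface.
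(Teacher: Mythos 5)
Your proposal is correct and follows essentially the same route as the paper: reduce to the two scalar maps $f:H^1(\Omega)\to L^2(\Omega)$ and $g:H^1(\Gamma)\to L^2(\Gamma)$, use H\"older with exponents $(3,3/2)$ and the critical embedding $H^1(\Omega)\hookrightarrow L^6(\Omega)$ in the bulk, and use the supercritical embeddings $H^1(\Gamma)\hookrightarrow L^p(\Gamma)$, $p<\infty$, on the two-dimensional boundary (the paper keeps a general H\"older exponent $q>2$ where you fix $q=4$, an inessential difference). No gaps.
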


\begin{proof}
The proof is straightforward.
Let us first assume (\ref{assm-3})--(\ref{assm-4}) hold. 
By the definition of $\mathcal{F}$ and the space $\mathcal{H}_0$, it suffices to show that $f:H^1(\Omega)\rightarrow L^2(\Omega)$ is locally Lipschitz continuous and that $g:H^1(\Gamma)\rightarrow L^2(\Gamma)$ is locally Lipschitz continuous, both of which follow from rather standard arguments. 
Indeed, let $R>0$ and $u,v\in H^1(\Omega)$ be such that $\|u\|_{H^1(\Omega)}\le R$ and $\|v\|_{H^1(\Omega)}\le R.$
Using assumption (\ref{assm-3}), there holds
\begin{align}
\|f(u)-f(v)\|_{L^2(\Omega)} & \le C\left\||u-v|(1+|u|^2+|v|^2)\right\|_{L^2(\Omega)}  \notag \\ 
& \le C\|u-v\|_{L^6(\Omega)} \left( 1+\|u\|^2_{L^6(\Omega)}+\|v\|^2_{L^6(\Omega)} \right)  \notag \\ 
& \le Q(R)\|u-v\|_{H^1(\Omega)}.  \notag
\end{align}
For the component with $g$, let $u,v\in H^1(\Gamma)$ be such that $\|u\|_{H^1(\Gamma)}\le R$ and $\|v\|_{H^1(\Gamma)}\le R$.
Recall $H^1(\Gamma)\hookrightarrow L^p(\Gamma)$ for $p\in[1,\infty)$ because $\Gamma$ is a two-dimensional manifold.
Now using (\ref{assm-4}), we find 
\begin{align}
\|g(u)-g(v)\|_{L^2(\Gamma)} & \le C\||u-v|(1+|u|^{\rho-1}+|v|^{\rho-1})\|_{L^2(\Gamma)}  \notag \\ 
& \le C\|u-v\|_{L^q(\Gamma)} \left( 1+\|u\|^{\rho-1}_{L^{\frac{2q(\rho-1)}{q-2}}(\Gamma)}+\|v\|^{\rho-1}_{L^{\frac{2q(\rho-1)}{q-2}}(\Gamma)} \right) \quad \text{(for any $q>2$)}  \notag \\ 
& \le Q(R)\|u-v\|_{H^1(\Gamma)}.  \notag
\end{align}
This finishes the proof. 
\end{proof}

\section{The semiflow and attractors}  \label{s:sf}

\subsection{Semilinear equations with Gevrey semigroups}

Before we move on to the discussion of Problem {\textbf{P}}$_\alpha$, $\alpha\in[0,1]$, we address an abstract problem.
The following proposition will become useful when we seek the existence of local mild solutions to abstract ODE,
\begin{equation}   \label{abstract-ode}
\left\{
\begin{array}{ll} \displaystyle\frac{dU}{dt} = AU + F(U) & \text{for}\ t>0, \\ 
U(0)=U_0. \end{array}
\right.
\end{equation}
A mild solution is a function $U$ in the variation of parameters form,
\begin{equation}  \label{mild-2}
U(t)=e^{At}U_0 + \int_0^t e^{A(t-s)}F(U(s)) ds.
\end{equation}

\begin{theorem}  \label{t:sg-theory}
Assume the (unbounded) operator $A$ with domain $D(A)$ is the infinitesimal generator of a $C_0$-semigroup of contractions $e^{At}$ on a reflexive Banach space $X$, and let $F:D(F) \subset X \to X$ be a nonlinear operator.
Let $U_0 \in X$ be given.
\begin{enumerate}
\item Suppose $e^{At}$ satisfies the estimate, for all $t>0,$
\begin{equation} \label{eq:gevrey estimate}
\|Ae^{At}\|_{\mathcal{L}(X)} \leq Gt^{-\gamma}.
\end{equation}
Note well that this implies that $e^{At}$ is of Gevrey class $\delta$ for all $\delta > \gamma$ (cf. \cite[Chapter: {\em{Gevrey semigroups}}]{Taylor89}).
Assume moreover that $F:D(A^\theta) \to X$ is locally Lipschitz continuous for some $0 \leq \theta < \frac{1}{\gamma}$.
Then there exists a maximal time $T \in (0,\infty]$ such that \eqref{mild-2} has a unique solution in
\begin{equation*}
C([0,T);X) \cap C((0,T);D(A^{1/\gamma})). 
\end{equation*} 
Moreover, if $T < \infty$, then $\lim_{t \to T^-} \|U(t)\|_X = +\infty$.
\item In particular, if $e^{At}$ is analytic, then the previous statement (1) holds for $\gamma = 1$.
\end{enumerate}
\end{theorem}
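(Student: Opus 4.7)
The plan is to carry out a contraction-mapping argument on the variation-of-parameters integral (\ref{mild-2}), with a weighted supremum norm adapted to the Gevrey smoothing exponent $\gamma$. The decisive analytical input is a bound of the form
\[
\|A^\sigma e^{At}\|_{\mathcal{L}(X)} \le C_\sigma\, t^{-\gamma \sigma}, \qquad \sigma \in [0,1],\ t>0.
\]
I would obtain this by combining the contractivity $\|e^{At}\|_{\mathcal{L}(X)} \le 1$ with the hypothesis (\ref{eq:gevrey estimate}) and the moment/interpolation inequality $\|A^\sigma y\|_X \le C \|Ay\|_X^\sigma \|y\|_X^{1-\sigma}$ for fractional powers of a closed operator, applied via the semigroup identity $e^{At} = e^{At/2} e^{At/2}$. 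The decisive role of the condition $\theta < 1/\gamma$ is already visible: it is exactly the threshold that makes the singular kernel $(t-s)^{-\gamma\theta}$ integrable.

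For $T, R > 0$ to be fixed, I would work in the complete metric space
\[
Y_{T,R} := \left\{ U \in C([0,T];X) :\ U(0)=U_0,\ \sup_{0 \le t \le T}\|U(t)\|_X + \sup_{0 < t \le T} t^{\gamma\theta}\|A^\theta U(t)\|_X \le R \right\},
\]
and define $\Phi(U)(t) := e^{At}U_0 + \int_0^t e^{A(t-s)} F(U(s))\,ds$. For the linear part, contractivity yields $\|e^{At}U_0\|_X \le \|U_0\|_X$ and the smoothing estimate yields $t^{\gamma\theta}\|A^\theta e^{At}U_0\|_X \le C_\theta \|U_0\|_X$. For the nonlinear part, $U \in Y_{T,R}$ ensures $\|A^\theta U(s)\|_X \le R\, s^{-\gamma\theta}$, and so the local Lipschitz hypothesis on $F : D(A^\theta) \to X$ gives $\|F(U(s))\|_X \le K(R)$; then
\[
\left\|A^\theta \int_0^t e^{A(t-s)} F(U(s))\, ds\right\|_X \le C_\theta K(R) \int_0^t (t-s)^{-\gamma\theta}\,ds = \frac{C_\theta K(R)}{1-\gamma\theta}\, t^{1-\gamma\theta},
\]
which is $o(1)$ as $t \downarrow 0$ in the weighted norm. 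Choosing $R$ of the order of $\|U_0\|_X$ and then $T$ small forces $\Phi : Y_{T,R} \to Y_{T,R}$. A parallel computation, using $\|F(U(s)) - F(V(s))\|_X \le L(R)\|A^\theta U(s) - A^\theta V(s)\|_X$, shows that $\Phi$ is a strict contraction provided $T$ is sufficiently small, and Banach's theorem supplies a unique fixed point solving (\ref{mild-2}).

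Extension to a maximal interval $[0, T^*)$ and the blow-up alternative then follow by the standard continuation argument: one takes $T^*$ to be the supremum of existence times, and if $T^* < \infty$ with $\limsup_{t \uparrow T^*} \|U(t)\|_X < \infty$, restarting the local existence argument from a time $t_0 < T^*$ sufficiently close to $T^*$ produces a solution on $[t_0, t_0+\tau]$ with $t_0 + \tau > T^*$, contradicting maximality. To upgrade the regularity to $C((0,T^*); D(A^{1/\gamma}))$, I would bootstrap: having established $U \in C((0,T^*); D(A^\theta))$, rerunning the fixed-point argument on small subintervals with initial datum $U(t_0) \in D(A^\theta)$ allows one to replace $\theta$ by any $\theta' \in [\theta, 1/\gamma)$ and iterate towards the borderline exponent. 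Assertion (2) is immediate because an analytic semigroup satisfies (\ref{eq:gevrey estimate}) with $\gamma = 1$ (cf.\ \cite{Pazy83}). The principal technical subtlety throughout is the interplay between the temporal weight $t^{\gamma\theta}$ and the integrable singularity $(t-s)^{-\gamma\theta}$: the whole argument rests on the regime $\gamma\theta < 1$, and the bootstrap towards $D(A^{1/\gamma})$ is what prevents one from closing the contraction directly at the borderline exponent itself.
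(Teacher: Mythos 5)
Your fixed-point scheme in the weighted space $Y_{T,R}$ is essentially the paper's argument in different clothing (the paper substitutes $W(t)=t^{\gamma\theta}A^\theta U(t)$ and contracts in $C([0,T];X)$, which is the same device), and your treatment of local existence, uniqueness, the smoothing bound $\|A^\sigma e^{At}\|_{\mathcal{L}(X)}\le C_\sigma t^{-\gamma\sigma}$ via the moment inequality, and the blow-up alternative all match the paper and are sound. The genuine gap is the final regularity claim. The theorem asserts $U\in C((0,T);D(A^{1/\gamma}))$ at the \emph{borderline} exponent, and your bootstrap cannot get there: the contraction at exponent $\theta'$ needs the kernel $(t-s)^{-\gamma\theta'}$ to be integrable, i.e.\ $\gamma\theta'<1$ strictly, and the constants $C_{\theta'}/(1-\gamma\theta')$ blow up as $\theta'\uparrow 1/\gamma$. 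Iterating "towards the borderline" yields $U(t)\in D(A^{\theta'})$ for every $\theta'<1/\gamma$, which is not the same as $U(t)\in D(A^{1/\gamma})$ (nor is $D(A^{1/\gamma})$ the intersection of the $D(A^{\theta'})$ in general). You in fact acknowledge that the contraction cannot be closed at the borderline, but you never supply the mechanism that crosses it.

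The missing idea, which is how the paper proceeds, is to trade spatial regularity of $F(U(s))$ for \emph{temporal} regularity. One first shows that $t\mapsto A^\theta U(t)$ is locally H\"older continuous on $(0,T)$ (via the standard decomposition of $W(t+h)-W(t)$ into three terms and the estimate $\|e^{Ah}Y-Y\|_X\le C_\beta h^{1-\gamma(1-\beta)}\|A^\beta Y\|_X$ with $1-\tfrac1\gamma<\beta<1-\theta$), hence $t\mapsto F(U(t))$ is locally H\"older continuous with values in $X$. Then a Pazy-type lemma for the linear inhomogeneous equation applies: writing
\begin{equation*}
\int_{t_0}^t e^{A(t-s)}f(s)\,ds=\int_{t_0}^t e^{A(t-s)}\bigl(f(s)-f(t)\bigr)\,ds+\int_{t_0}^t e^{A(t-s)}f(t)\,ds,
\end{equation*}
the second integral is explicitly in $D(A)$, while in the first the H\"older modulus $|s-t|^\beta$ compensates the nonintegrable singularity $\|A^{1/\gamma}e^{A(t-s)}\|_{\mathcal{L}(X)}\lesssim (t-s)^{-1}$, giving an integrand of order $(t-s)^{\beta-1}$. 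This is what lands the solution in $D(A^{1/\gamma})$; without it your argument proves a strictly weaker conclusion than the one stated.
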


\begin{remark}
Part (2) is well-known and applies to Problem {\textbf{P}}$_\alpha$ for $\alpha > 0$.
Concerning Problem {\textbf{P}}$_0$, we will apply part (1) of this theorem with $\gamma = 2$.
\end{remark}

Theorem \ref{t:sg-theory} is a corollary of the following two propositions.
\begin{proposition}  \label{t:sg-theory1}
Assume the (unbounded) operator $A$ with domain $D(A)$ is the infinitesimal generator of a $C_0$-semigroup of contractions $e^{At}$ on a reflexive Banach space $X$, and let $F:D(F) \subset X \to X$ be a nonlinear operator.
Let $U_0 \in X$ be given.

Suppose that for some $\gamma \geq 1$, $e^{At}$ satisfies the estimate \eqref{eq:gevrey estimate} for all $t>0.$
Assume moreover that $F:D(A^\theta) \to X$ is locally Lipschitz continuous for some $0 \leq \theta < \frac{1}{\gamma}$.
Then there exists a maximal time $T \in (0,\infty]$ such that \eqref{mild-2} has a unique solution in
$$
C([0,T);X) \cap C((0,T);D(A^\theta)).
$$
Moreover, if $T < \infty$, then $\lim_{t \to T^-} \|U(t)\|_X = +\infty$.
\end{proposition}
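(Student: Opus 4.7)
The plan is to establish the existence of the mild solution via a standard Picard-type fixed point argument on a suitable complete metric space, adapting the classical strategy of Pazy and Henry for analytic semigroups to the Gevrey setting. The crucial new ingredient is a smoothing estimate at the fractional-power level; everything else reduces to a contraction mapping argument plus a standard continuation/blow-up alternative.

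First, I would derive the key smoothing estimate
\[
\|A^\theta e^{At}\|_{\mathcal{L}(X)} \le C\, t^{-\gamma\theta}, \qquad \forall t>0.
\]
Since $A$ is the generator of a $C_0$-semigroup of contractions on the reflexive Banach space $X$, the fractional powers $A^\theta$ are well defined (via a Dunford integral analogous to the one used to define $A_0^\theta$ in the paper) and satisfy the moment inequality $\|A^\theta x\|_X \le C\,\|x\|_X^{1-\theta}\|Ax\|_X^{\theta}$ for $x\in D(A)$. Applying this to $x=e^{At}U_0$ and combining with $\|e^{At}\|_{\mathcal{L}(X)}\le 1$ and the hypothesis $\|Ae^{At}\|_{\mathcal{L}(X)}\le G t^{-\gamma}$ yields the estimate above. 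The hypothesis $\theta<1/\gamma$ is exactly what makes the singularity $(t-s)^{-\gamma\theta}$ appearing in the forcing term integrable at $s=0$.

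Next, for $R>2\|U_0\|_X$, let $L=L(R)$ denote the Lipschitz constant of $F$ on the $D(A^\theta)$-ball of radius $R$. For $T_0>0$ to be chosen, consider the complete metric space
\[
\mathcal{K}_{T_0,R}=\Bigl\{U\in C([0,T_0];X) : U(t)\in D(A^\theta) \text{ for } t\in(0,T_0],\ \ \sup_{[0,T_0]}\|U(t)\|_X+\sup_{(0,T_0]} t^{\gamma\theta}\|A^\theta U(t)\|_X\le R\Bigr\}
\]
equipped with the natural metric. I would verify that the map
\[
\Phi(U)(t):=e^{At}U_0+\int_0^t e^{A(t-s)}F(U(s))\,ds
\]
sends $\mathcal{K}_{T_0,R}$ into itself and is a strict contraction for $T_0$ small. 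The self-mapping follows from $\|e^{At}U_0\|_X\le \|U_0\|_X$, $\|A^\theta e^{At}U_0\|_X\le C t^{-\gamma\theta}\|U_0\|_X$, the pointwise bound $\|F(U(s))\|_X\le L\, R\, s^{-\gamma\theta}+\|F(0)\|_X$, and the beta-function style integrals $\int_0^t(t-s)^{-\gamma\theta}s^{-\gamma\theta}\,ds\le C t^{1-2\gamma\theta}$, which converge precisely because $\gamma\theta<1$. The contraction estimate is analogous, using local Lipschitz continuity of $F$. The unique fixed point is the desired local mild solution; the continuity of $U$ in $X$ on $[0,T_0]$ and in $D(A^\theta)$ on $(0,T_0]$ follows from the variation-of-parameters formula, the smoothing estimate, and strong continuity of $e^{At}$ in the appropriate norms, while uniqueness in $C([0,T_0];X)$ follows from Gronwall applied to the difference of two solutions (which automatically lie in $\mathcal{K}_{T_0,R}$ by the smoothing estimate applied to the integral equation itself).

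Finally, I would extend to a maximal existence time $T\in(0,\infty]$ in the usual way: set $T=\sup\{T'>0:$ a mild solution exists on $[0,T')\}$, and if $T<\infty$ with $\limsup_{t\to T^-}\|U(t)\|_X<\infty$, one reapplies the local existence theorem starting from $U(T-\tau)$ to extend the solution past $T$, contradicting the maximality of $T$. The main obstacle in this proof is the careful handling of the singular integral $\int_0^t (t-s)^{-\gamma\theta}\|F(U(s))\|_X\,ds$; the condition $\theta<1/\gamma$ is essential precisely here, and all the fixed-point bookkeeping turns on making this integral behave. The rest—continuity of the fixed point in the two topologies, uniqueness, and the blow-up dichotomy—is routine once the smoothing estimate and the $\mathcal{K}_{T_0,R}$ framework are in place.
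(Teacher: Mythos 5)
Your proposal is correct and follows essentially the same route as the paper: the same moment-inequality smoothing estimate $\|A^\theta e^{At}\|_{\mathcal{L}(X)}\le C t^{-\gamma\theta}$, the same singular convolution (beta-function) integrals made convergent by $\gamma\theta<1$, a contraction on a small time interval, and the standard continuation argument for the blow-up alternative. The only cosmetic difference is that you run the fixed point in the weighted space $\mathcal{K}_{T_0,R}$, whereas the paper performs the equivalent change of variables $W(t)=t^{\gamma\theta}A^\theta U(t)$ so as to contract in an ordinary sup-norm ball.
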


\begin{proof}
We will begin by showing that the equation,
\begin{equation} \label{change of var}
W(t) = t^{\gamma\theta}A^\theta e^{At}U_0 + t^{\gamma\theta}\int_0^t A^\theta e^{A(t-s)}F(s^{-\gamma\theta}A^{-\theta}W(s))ds,
\end{equation}
has a unique solution in $C([0,T];X)$ for some $T > 0$.
For $R > 0,T > 0$ denote by $X_{R,T}$ the complete metric space given by,
$$
X_{R,T} := C([0,T];B_X(R)) \quad \text{with} \quad \|V\|_{X_{R,T}} = \sup_{t \in [0,T]} \|V(t)\|_X,
$$
where $B_X(R) := \{x \in X: \|x\|_X \leq R\}$.
We want to show that the map given by
$$
\psi(W)(t) := t^{\gamma\theta}A^\theta e^{At}U_0 + t^{\gamma\theta}\int_0^t A^\theta e^{A(t-s)}F(s^{-\theta}A^{-\gamma\theta}W(s))ds
$$
is a contraction on $X_{R,T}$ for appropriately chosen $R>0$ and $T>0$.

First we recall that by interpolation (e.g. \cite[Theorem II.5.34]{Engel&Nagel00}) there exists $L > 0$ such that, for all $x\in D(A)$, the moment inequality holds
$$
\|A^\theta x\|_X \leq L\|x\|^{1-\theta}_X\|Ax\|^\theta_X.
$$
Then using the assumption on $e^{At}$, this gives, for all $x\in D(A)$,
$$
\|A^\theta e^{At}x\|_X \leq LG^\theta t^{-\gamma \theta}\|x\|_X,
$$
and since $D(A)$ is dense in $X$, it follows that, for all $t > 0$,
\begin{equation}  \label{sg bound}
\|A^\theta e^{At}\|_{\mathcal{L}(X)} \leq LG^\theta t^{-\gamma \theta}.
\end{equation}
Next, since $F:D(A^\theta) \to X$ is locally Lipschitz, there exists $K = K(R)$ such that, for all $x,y\in B_X(R),$
\begin{equation*}
\|F(A^{-\theta}x)-F(A^{-\theta}y)\|_X \leq K\|x-y\|_X.
\end{equation*}
We will also use below the following kernel estimates:
\begin{align}
\int_0^t (t-s)^{-\gamma \theta}ds & = \int_0^t s^{-\gamma \theta}ds  \notag \\ 
& = \frac{t^{1-\gamma \theta}}{1-\gamma \theta},  \label{kernel1}
\end{align}
and
\begin{align}
\int_0^t (t-s)^{-\gamma \theta}s^{-\gamma \theta} ds & = 2\int_0^{t/2} (t-s)^{-\gamma \theta}s^{-\gamma \theta}ds   \notag \\ 
& \leq  2\left(\frac{t}{2}\right)^{-\gamma \theta} \int_0^{t/2} s^{-\gamma \theta}ds  \notag \\
& = \frac{4^{\gamma \theta}t^{1-2\gamma \theta}}{1-\gamma \theta}.\label{kernel2}
\end{align}

Now observe that for $W \in X_{R,T}$, $t > 0$,
\begin{align}
\|\phi(W)(t)\|_X & \leq t^{\gamma\theta}\|A^\theta e^{At}\|_{\mathcal{L}(X)}\|U_0\|_X + t^{\gamma\theta}\int_0^t \|A^\theta e^{A(t-s)}\|_{\mathcal{L}(X)}\|F(s^{-\gamma\theta}A^{-\theta}W(s))-F(0)\|_X ds  \notag \\ 
& + t^{\gamma\theta}\int_0^t \|A^\theta e^{A(t-s)}\|_{\mathcal{L}(X)}\|F(0)\|_X ds  \notag \\ 
& \leq LG^\theta\|U_0\|_X + LG^\theta K t^{\gamma\theta}\int_0^t (t-s)^{-\gamma \theta}s^{-\gamma\theta}\|W(s)\|_X ds  \notag \\ 
& + LG^\theta t^{\gamma\theta}\int_0^t (t-s)^{-\gamma \theta} \|F(0)\|_X ds  \notag \\ 
& \leq LG^\theta\|U_0\|_X + LG^\theta KR \frac{4^{\gamma \theta}t^{1-\gamma \theta}}{1-\gamma \theta} + LG^\theta\|F(0)\|_X  \frac{t}{1-\gamma \theta}.  \notag
\end{align}
Therefore,
$$
\sup_{t \in [0,T]} \|\phi(W)(t)\|_X \leq LG^\theta\left(\|U_0\|_X +  \frac{4^{\gamma \theta}KRT^{1-\gamma \theta}+ T\|F(0)\|_X}{1-\gamma \theta}\right).
$$
Similarly, for $W,V \in X_{R,T}$, $t > 0$, we have
\begin{multline}
\|\phi(W)(t)-\phi(V)(t)\|_X \leq t^{\gamma\theta}\int_0^t \|A^\theta e^{A(t-s)}\|_{\mathcal{L}(X)}\|F(s^{-\gamma\theta}A^{-\theta}W(s))-F(s^{-\gamma\theta}A^{-\theta}V(s))\|_X ds 
\\
\leq LG^\theta K t^{\gamma\theta} \int_0^t (t-s)^{-\gamma \theta}s^{-\gamma\theta} \|W(s)-V(s)\|_X ds
\leq LG^\theta K \frac{4^{\gamma \theta}t^{1-\gamma \theta}}{1-\gamma \theta} \|W-V\|_{X_{R,T}},  \notag
\end{multline}
so that
$$
\|\phi(W)-\phi(V)\|_{X_{R,T}} \leq LG^\theta K \frac{4^{\gamma \theta}T^{1-\gamma \theta}}{1-\gamma \theta} \|W-V\|_{X_{R,T}}.
$$
Fix $R = 2LG^\theta \|U_0\|_X$.
Then it follows to choose $T > 0$ small enough such that
$$
LG^\theta\left(\|U_0\|_X +  \frac{4^{\gamma \theta}KRT^{1-\gamma \theta}+ T\|F(0)\|_X}{1-\gamma \theta}\right) \leq R,
$$
and
$$
LG^\theta K \frac{4^{\gamma \theta}T^{1-\gamma \theta}}{1-\gamma \theta} < 1.
$$
Then it follows that $\phi$ is a well-defined contraction on $X_{R,T}$.
It therefore has a unique fixed point, which is the same as saying \eqref{change of var} has a unique solution in $C([0,T];X)$.

To finish, let $U(t) = t^{-\gamma\theta}A^{-\theta} W(t)$ for $t > 0$, $U(0) = U_0$.
Then $U \in C((0,T);D(A^\theta))$, and for $t > 0$ we see by multiplying \eqref{change of var} by $t^{-\gamma\theta}A^{-\theta}$ that $U(t)$ satisfies \eqref{mild-2}.
It remains to show that $\lim_{t \to 0+} \|U(t) - U_0\|_X = 0$.
Observe:
\begin{align}
\|U(t) - U_0\|_X & \leq \|e^{At}U_0-U_0\|_X + \int_0^t \|F(s^{-\gamma\theta}A^{-\theta}W(s))-F(0)\|_Xds + t\|F(0)\|_X  \notag \\
& \leq \|e^{At}U_0-U_0\|_X + K\|W\|_{C([0,T];X)}\frac{t^{1-\gamma \theta}}{1-\gamma \theta} + t\|F(0)\|_X,  \notag
\end{align}
which converges to zero as $t \to 0$ by the strong continuity of $e^{At}$.
Hence $U \in C([0,T];X)$.

Finally, observe that if $\lim_{t \to T^-} \|U(T)\|_X = R < +\infty$, then by the argument just given, we can choose a $T_1 > 0$, depending on $R$ such that, for all $\epsilon > 0$, there exists $V \in C([0,T_1];X) \cap C((0,T_1];D(A^\theta))$ satisfying \eqref{mild-2} with $U_0$ replaced by $U(T-\epsilon)$.
Since $T_1$ depends on $R$ and not on $\epsilon$, we can choose $\epsilon = \frac{T_1}{2}$.
Then setting $\widetilde{U}(t) = U(t)$ for $t \in [0,T]$ and $\widetilde{U}(t) = V(t-T+\frac{T_1}{2})$ for $t \in [T,T+\frac{T_1}{2}]$, we see that $\widetilde{U} \in C([0,T+\frac{T_1}{2}];X) \cap C((0,T+\frac{T_1}{2}];D(A^\theta))$ solves \eqref{mild-2}, so $T$ is not maximal.
\end{proof}

\begin{remark} \label{smoothing bound 1}
In the above proof, $R$ was chosen proportional to $\|U_0\|_X$ and $T$ was chosen as a function of $R$.
We derive from the proof that, for small enough $t > 0$, we have $\|A^\theta U(t)\|_X \leq Ct^{-\gamma \theta}\|U_0\|_X$ where $C$ is a constant (for instance $C = 2LG^\theta$ as in the proof).
\end{remark}

The next proposition requires a lemma.
\begin{lemma} \label{holder f}
Assume the (unbounded) operator $A$ with domain $D(A)$ is the infinitesimal generator of a $C_0$-semigroup of contractions $e^{At}$ on a reflexive Banach space $X$, such that for some $\gamma \geq 1$, $e^{At}$ satisfies the estimate \eqref{eq:gevrey estimate} for all $t>0.$
Suppose $f \in L^1(0,T;X)$ is locally H\"older continuous on $(0,T)$.
Then for any $U_0 \in X$, there exists a unique $U \in C([t_0,T];X)$ which is a mild solution of
\begin{equation}   \label{abstract-ode-f}
\left\{
\begin{array}{ll} \displaystyle\frac{dU}{dt} = AU + f & \text{for}\ t>0, \\ 
U(t_0)=U_0, \end{array}
\right.
\end{equation}
and we have $U \in C((t_0,T);D(A^{1/\gamma}))$.
\end{lemma}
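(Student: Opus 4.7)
The mild solution is by definition
\[
U(t) := e^{A(t-t_0)}U_0 + \int_{t_0}^{t} e^{A(t-s)} f(s)\,ds,
\]
so existence and uniqueness in $C([t_0,T];X)$ reduce to verifying that this formula defines a continuous $X$-valued function. The first term is continuous by strong continuity of $e^{At}$ together with the contraction bound; continuity of the convolution is a standard dominated-convergence argument using $f \in L^1(t_0,T;X)$ and $\|e^{At}\|_{\mathcal{L}(X)} \leq 1$. Uniqueness among functions satisfying the variation-of-parameters formula is immediate. So the substantive claim is the parabolic-type smoothing $U(t) \in D(A^{1/\gamma})$ for $t > t_0$, with continuity.

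The first step is the moment inequality, already used in the proof of Proposition \ref{t:sg-theory1}: combining $\|Ae^{At}\|_{\mathcal{L}(X)} \leq Gt^{-\gamma}$ with $\|A^\theta x\|_X \leq L \|x\|^{1-\theta}_X \|Ax\|^{\theta}_X$ at $\theta = 1/\gamma$ yields
\[
\|A^{1/\gamma} e^{At}\|_{\mathcal{L}(X)} \leq L G^{1/\gamma} t^{-1}, \quad t > 0.
\]
This immediately handles the homogeneous part: for $t > t_0$, $e^{A(t-t_0)}U_0 \in D(A^{1/\gamma})$, with norm bounded by $LG^{1/\gamma}(t-t_0)^{-1}\|U_0\|_X$, and continuity in $t$ follows from the semigroup law and strong continuity on $D(A^{1/\gamma})$.

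For the convolution $V(t) := \int_{t_0}^{t} e^{A(t-s)}f(s)\,ds$, fix $t \in (t_0,T)$ and choose $\delta > 0$ so small that $f$ is H\"older continuous on a neighborhood of $[t-\delta,t]$ with exponent $\eta \in (0,1]$ and constant $C_\eta$. Split
\[
V(t) = \int_{t_0}^{t-\delta} e^{A(t-s)} f(s)\,ds + \int_{t-\delta}^{t} e^{A(t-s)}(f(s)-f(t))\,ds + \int_{0}^{\delta} e^{A\sigma} f(t)\,d\sigma.
\]
On the first piece, $(t-s)^{-1} \leq \delta^{-1}$, so $\int_{t_0}^{t-\delta}\|A^{1/\gamma} e^{A(t-s)}f(s)\|_X ds \leq L G^{1/\gamma} \delta^{-1}\|f\|_{L^1}$, and closedness of $A^{1/\gamma}$ places the piece in $D(A^{1/\gamma})$. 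On the second piece, the H\"older bound gives
\[
\int_{t-\delta}^{t} \|A^{1/\gamma} e^{A(t-s)}(f(s)-f(t))\|_X\,ds \leq L G^{1/\gamma} C_\eta \int_{t-\delta}^{t} (t-s)^{\eta-1} ds < \infty,
\]
so again by closedness the piece lies in $D(A^{1/\gamma})$. On the third piece, the classical identity $\int_0^\delta e^{A\sigma}f(t)\,d\sigma \in D(A)$ with $A\int_0^\delta e^{A\sigma}f(t)\,d\sigma = (e^{A\delta}-I)f(t)$ applies, and since $\gamma \geq 1$ forces $1/\gamma \leq 1$ and hence $D(A) \subset D(A^{1/\gamma})$, this piece too is in $D(A^{1/\gamma})$.

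The main obstacle is the third piece: one must know that $D(A) \hookrightarrow D(A^{1/\gamma})$ for $\gamma \geq 1$, which is standard but relies on the contractivity of $e^{At}$ to guarantee $(0,\infty) \subset \rho(A)$ and hence a well-defined functional calculus for $A^{1/\gamma}$. Once all three pieces are controlled, continuity of $A^{1/\gamma} V(t)$ on $(t_0,T)$ follows from uniform integrability of the kernel estimates above under small perturbations of $t$, via dominated convergence. Together with Step 1 this gives $U \in C((t_0,T); D(A^{1/\gamma}))$, completing the proof.
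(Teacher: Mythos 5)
Your proposal is correct and follows essentially the same route as the paper: the moment inequality gives $\|A^{1/\gamma}e^{At}\|_{\mathcal{L}(X)}\le LG^{1/\gamma}t^{-1}$, the convolution is handled by subtracting $f(t)$ near the singularity to exploit local H\"older continuity, the constant part $\int_0^\delta e^{A\sigma}f(t)\,d\sigma$ is treated with the classical $D(A)$ identity, and closedness of $A^{1/\gamma}$ (equivalently, the paper's $V_\epsilon$ approximation plus dominated convergence) finishes the argument. The only difference is organizational: you split the time interval at $t-\delta$ before subtracting $f(t)$, whereas the paper subtracts $f(t)$ over all of $[t_0,t]$ and uses a two-regime kernel bound, which amounts to the same estimates.
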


\begin{proof}
By definition of mild solution, we define, for all $t\in[t_0,T]$,
\begin{equation*}
U(t) = e^{A(t-t_0)}U_0 + \int_{t_0}^t e^{A(t-s)}f(s)ds.
\end{equation*}
Since $e^{At}$ satisfies \eqref{eq:gevrey estimate}, it follows that $t \mapsto A^{1/\gamma}e^{A(t-t_0)}U_0$ is continuous for $t_0 < t < T$.
We need to show that $\int_{t_0}^t e^{A(t-s)}f(s)ds \in D(A^{1/\gamma})$ for all $t_0 < t < T$ and that it is continuous in $t$.
Note that $\int_{t_0}^t e^{A(t-s)}f(t)ds$ is in $D(A)$ with $A\int_{t_0}^t e^{A(t-s)}f(t)ds = (e^{At}-e^{At_0})f(t)$, which is also continuous in $t$ for $t_0 < t < T$.
So it suffices to show that
$$
V(t) := \int_{t_0}^t e^{A(t-s)}(f(s)-f(t))ds
$$
is in $D(A^{1/\gamma})$ and is continuous in $t$.

We essentially follow the proof in \cite[Theorem 4.3.2]{Pazy83}.
Indeed, the approximations $V_\epsilon$ defined by
$$
V_\epsilon(t) := \int_{t_0}^{t-\epsilon} e^{A(t-s)}(f(s)-f(t))ds, \quad t \geq \epsilon
$$
and $V(t) = 0$ for $t < \epsilon$ converge to $V(t)$ as $\epsilon \to 0$ and satisfy $V(t) \in D(A)$ by \eqref{eq:gevrey estimate}.
We want to show that $A^{1/\gamma}V_\epsilon(t)$ converges.
For a fixed  $\delta_t > 0$ small enough, we can write
\begin{equation*}
\|f(s)-f(t)\| \leq C|s-t|^\beta, \quad \forall |s-t| < \delta_t,
\end{equation*}
for some $\beta \in (0,1)$, hence
\begin{equation} \label{f estimate}
\|A^{1/\gamma}e^{A(t-s)}(f(s)-f(t))\| \leq C|s-t|^{\beta-1} + LG^{1/\gamma}\delta_t^{-1}\|f(s)-f(t)\| \quad \forall s \in [t_0,t].
\end{equation}
Since the right-hand side is integrable, we conclude that
\begin{equation*}
\lim_{\epsilon \to 0} A^{1/\gamma}V_\epsilon(t) = \int_{t_0}^t A^{1/\gamma}e^{A(t-s)}(f(s)-f(t))ds,
\end{equation*}
by the dominated convergence theorem.
Since $A$ is closed we get $V(t) \in D(A^{1/\gamma})$ with $A^{1/\gamma}V(t)$ equal to the right-hand side.

Finally, to see that $A^{1/\gamma}V(t)$ is continuous, we can split it as
\begin{equation} \label{V split}
A^{1/\gamma}V(t) = \int_{t_0}^{t_0 + \delta} A^{1/\gamma}e^{A(t-s)}(f(s)-f(t))ds + \int_{t_0 + \delta}^t A^{1/\gamma}e^{A(t-s)}(f(s)-f(t))ds
\end{equation}
for arbitrary $\delta > 0$.
The second term on the right-hand side is continuous in $t$ by the observations made above.
The first term, on the other hand, can be made arbitrarily small by choosing $\delta > 0$ small.
This completes the proof.
\end{proof}

\begin{remark} \label{smoothing bound-2}
We note for future reference that, using \eqref{V split} and \eqref{f estimate}, we can estimate
\begin{equation} \label{AU estimate}
\|A^{1/\gamma}U(t)\| \leq LG^{1/\gamma}(t-t_0)^{-1}\|U_0\| + LG^{1/\gamma}\delta_t^{-1}\int_{t_0}^{t-\delta_t}\|f(s)-f(t)\|ds + 2\|A^{1/\gamma-1}f(t)\|.
\end{equation}
\end{remark}

\begin{proposition} \label{t:sg-theory2}
Assume the (unbounded) operator $A$ with domain $D(A)$ is the infinitesimal generator of a $C_0$-semigroup of contractions $e^{At}$ on a reflexive Banach space $X$, and let $F:D(F) \subset X \to X$ be a nonlinear operator.
Suppose that for some $\gamma \geq 1$, $e^{At}$ satisfies the estimate \eqref{eq:gevrey estimate} for all $t>0.$
Assume moreover that $F:D(A^\theta) \to X$ is locally Lipschitz continuous for some $0 \leq \theta < \frac{1}{\gamma}$.

Then for any initial data $U_0 \in D(A^\theta)$ and $t_0 \geq 0$ there exists a maximal time $T \in (t_0,\infty]$ such that
\begin{equation}   \label{abstract-odet_0}
\left\{
\begin{array}{ll} \displaystyle\frac{dU}{dt} = AU + F(U) & \text{for}\ t>0, \\ 
U(t_0)=U_0, \end{array}
\right.
\end{equation}
has a unique mild solution in
$$
C([t_0,T);X) \cap C((t_0,T);D(A^{1/\gamma})).
$$
Moreover, if $T < \infty$, then $\lim_{t \to T^-} \|U(t)\|_X = +\infty$.
\end{proposition}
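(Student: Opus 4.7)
The plan is to combine Proposition \ref{t:sg-theory1} (local existence in $X$) with Lemma \ref{holder f} (regularity of non-autonomous mild solutions with a H\"older forcing term), using a bootstrap based on the smoothing estimate of Remark \ref{smoothing bound 1} to bridge the two.

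First, since $U_0 \in D(A^\theta) \subset X$, an application of Proposition \ref{t:sg-theory1} (with a translation from $0$ to $t_0$) yields a maximal time $T \in (t_0, \infty]$ and a unique mild solution $U \in C([t_0, T); X) \cap C((t_0, T); D(A^\theta))$ of \eqref{abstract-odet_0}, along with the blow-up alternative. The stronger initial regularity $U_0 \in D(A^\theta)$ lets one upgrade to $U \in C([t_0, T); D(A^\theta))$ by a direct inspection of the mild formula: $A^\theta$ commutes with the homogeneous term $e^{A(t-t_0)}U_0$, and the Duhamel integral is continuous into $D(A^\theta)$ because the smoothing bound $\|A^\theta e^{A\tau}\|_{\mathcal{L}(X)} \leq LG^\theta \tau^{-\gamma \theta}$ is integrable at $\tau = 0$ under the assumption $\gamma \theta < 1$.

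Second, to apply Lemma \ref{holder f} with $f(t) := F(U(t))$, one must show that $f$ is locally H\"older continuous on $(t_0, T)$ with values in $X$. Since $F : D(A^\theta) \to X$ is locally Lipschitz, this reduces to showing that $U$ is locally H\"older continuous into $D(A^\theta)$. Writing
\begin{equation*}
A^\theta(U(t+h) - U(t)) = (e^{Ah} - I)A^\theta U(t) + \int_t^{t+h} A^\theta e^{A(t+h-s)} F(U(s))\, ds,
\end{equation*}
the smoothing bound handles the Duhamel term as $O(h^{1-\gamma\theta})$. For the first term I would iterate the mild formula, using Remark \ref{smoothing bound 1}, to show that $U(t) \in D(A^\mu)$ for every $\mu < 1/\gamma$, with $\|A^\mu U(t)\|_X$ uniformly bounded on compact subsets of $(t_0, T)$. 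Combining this additional regularity with an interpolation inequality of the form $\|(e^{Ah} - I)y\|_X \leq Ch^\beta \|A^\beta y\|_X$, applied to $y = A^\theta U(t)$ for a suitable small $\beta > 0$, produces the required H\"older rate for $A^\theta(U(t+h)-U(t))$.

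With $f$ locally H\"older on $(t_0, T)$, Lemma \ref{holder f} applied on each compact subinterval yields $U \in C((t_0, T); D(A^{1/\gamma}))$; the blow-up alternative is inherited from the first step. The principal obstacle is the H\"older estimate for $(e^{Ah}-I)A^\theta U(t)$ in the Gevrey case: for $\gamma > 1$, the direct interpolation identity $\|(e^{Ah}-I)A^{-\nu}\|_{\mathcal{L}(X)} \leq Ch^{1-\gamma(1-\nu)}$ requires $\nu > 1 - 1/\gamma$, whereas the bootstrap only produces additional regularity up to $\nu < 1/\gamma - \theta$; reconciling these two ranges is the crux of the argument and must exploit the full strength of the smoothing bootstrap, in contrast to the analytic case ($\gamma = 1$) where a single application of the moment inequality would suffice.
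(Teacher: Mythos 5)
Your overall architecture coincides with the paper's: obtain $W=A^\theta U\in C([t_0,T];X)$ from the fixed-point argument of Proposition \ref{t:sg-theory1}, show that $t\mapsto A^\theta U(t)$ (hence $t\mapsto F(U(t))$) is locally H\"older continuous on $(t_0,T)$, and then invoke Lemma \ref{holder f} to upgrade to $C((t_0,T);D(A^{1/\gamma}))$. One minor remark on your first step: the ``upgrade by inspection'' is slightly circular, since bounding $\int_{t_0}^t A^\theta e^{A(t-s)}F(U(s))\,ds$ already presupposes $\sup_s\|F(U(s))\|_X<\infty$ down to $s=t_0$, i.e. a uniform bound on $A^\theta U(s)$ near $t_0$, which is what is being proved; the clean fix (and what the paper does) is to rerun the contraction mapping directly on $W(t)=e^{A(t-t_0)}A^\theta U_0+\int_{t_0}^t A^\theta e^{A(t-s)}F(A^{-\theta}W(s))\,ds$, which needs no weight $t^{\gamma\theta}$ precisely because $U_0\in D(A^\theta)$.

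The genuine gap is the one you flag yourself and do not close: the H\"older estimate for $(e^{Ah}-I)A^\theta U(t)$. Your route requires a single exponent $\nu$ satisfying both $\nu>1-\frac{1}{\gamma}$ (so that $\|(e^{Ah}-I)Y\|_X\le C h^{1-\gamma(1-\nu)}\|A^\nu Y\|_X$ carries a positive power of $h$) and $\theta+\nu<\frac{1}{\gamma}$ (so that the bootstrap can place $U(t)$ in $D(A^{\theta+\nu})$). For $\gamma=2$ these read $\nu>\frac12$ and $\nu<\frac12-\theta$, an empty range for every $\theta\ge0$ --- and $\gamma=2$ is exactly the Gevrey case for which the paper needs this proposition. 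So no amount of ``exploiting the full strength of the smoothing bootstrap'' can rescue the argument in the form you state it; an additional idea is required. The paper avoids the clash by not applying the $(e^{Ah}-I)$ estimate to $A^\theta U(t)$ as a whole: it writes $W(t+h)-W(t)=I_1+I_2+I_3$ with $I_1=(e^{Ah}-I)e^{A(t-t_0)}A^\theta U_0$ and $I_2=\int_{t_0}^t(e^{Ah}-I)A^\theta e^{A(t-s)}F(A^{-\theta}W(s))\,ds$, choosing $\beta\in(1-\frac{1}{\gamma},1-\theta)$ and paying for the power of $h$ with negative powers of $(t-t_0)$ and $(t-s)$ --- admissible because only \emph{local} H\"older continuity on the open interval $(t_0,T)$ is needed; the residual singularity at $s=t$ in $I_2$ is then absorbed by splitting the integral at $s=t-h^a$ for a small $a>0$ and using the crude bound $\|e^{Ah}-I\|_{\mathcal{L}(X)}\le2$ near the endpoint, at the cost of a smaller but still positive H\"older exponent. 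Alternatively, your cleaner inequality $\|(e^{Ah}-I)y\|_X\le Ch^{\beta}\|A^{\beta'}y\|_X$ (for $\beta<\beta'$, with no $\gamma$-penalty) can be justified by interpolating the contraction bound $\|(e^{Ah}-I)y\|_X\le h\|Ay\|_X$ against $\|e^{Ah}-I\|_{\mathcal{L}(X)}\le2$, which uses contractivity rather than the Gevrey estimate and removes the constraint $\nu>1-\frac{1}{\gamma}$ altogether; but some such device must be supplied. As written, your proposal stops exactly at the step that makes the Gevrey case nontrivial.
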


\begin{proof}
Using the same fixed point argument as in the proof of Proposition \ref{t:sg-theory1}, and keeping in mind that $U_0 \in D(A^\theta)$, we deduce that there exists a unique $W \in C([t_0,T];X)$, for small enough $T > 0$, such that
\begin{equation} \label{eq:var par1}
W(t) = e^{A(t-t_0)}A^\theta U_0 + \int_{t_0}^t A^\theta e^{A(t-s)}F(A^{-\theta}W(s))ds.
\end{equation}
We will prove, further, that $W(\cdot)$ is locally H\"older continuous in time on $(t_0,T)$.
The proof follows the same lines as in \cite[Theorem 6.3.1]{Pazy83}.
For $t_0 \leq t \leq t+h \leq T$ write
\begin{multline}
W(t+h) - W(t) = e^{A(t+h-t_0)}A^\theta U_0 - e^{A(t-t_0)}A^\theta U_0 
+ \int_{t_0}^t (e^{Ah}-I)A^\theta e^{A(t-s)}F(A^{-\theta}W(s))ds \\
+ \int_t^{t+h} A^\theta e^{A(t+h-s)}F(A^{-\theta}W(s))ds = I_1 + I_2 + I_3.  \notag
\end{multline}
In order to estimate these terms, first note that $\|F(A^{-\theta}W(t))\|_X \leq C$ for all $t \in [t_0,T]$, where $C$ depends on $\sup_{t \in [t_0,T]} \|W(t)\|_X$ and the Lipschitz constant of $F:D(A^\theta) \to X$.
Second, using \eqref{sg bound} we deduce that for any $0 < \beta < 1$, for any $Y \in D(A^\beta)$,
\begin{multline}
\|e^{At}Y - Y\|_X = \left\|\int_0^t Ae^{As}Y ds \right\|_X = \left\|\int_0^t A^{1-\beta}e^{As}A^\beta Y ds \right\|_X \\
\leq C\int_0^t s^{-(1-\beta)\gamma}\|A^\beta Y\|_X ds
= C_\beta t^{1-\gamma(1-\beta)}\|A^\beta Y\|_X .  \notag
\end{multline}
Combining these estimates we deduce
\begin{align*}
\|I_1\|_X &\leq C_\beta h^{1-\gamma(1-\beta)}\|A^{\theta + \beta}e^{A(t-t_0)}U_0\|_X \leq C_\beta h^{1-\gamma(1-\beta)} (t-t_0)^{-\gamma(\theta + \beta)},\\
\|I_2\|_X &\leq C_\beta h^{1-\gamma(1-\beta)} \int_{t_0}^t (t-s)^{-\gamma \theta}ds = C_{\beta,\theta} h^{1-\gamma(1-\beta)}(t-t_0)^{1-\gamma \theta},\\
\|I_3\|_X &\leq C_\theta h^{1-\gamma\theta} \leq C_{\beta,\theta}h^{1-\gamma(1-\beta)},
\end{align*}
where $\beta$ is chosen in the range $1 - \frac{1}{\gamma} < \beta < 1 - \theta$.
Then these estimates prove that $W(\cdot)$ is locally H\"older continuous on $(t_0,T)$ with H\"older exponent $1-\gamma(1-\beta)$.
Since $F:D(A^\theta) \to X$ is locally Lipschitz, it follows then that $t \mapsto F(A^{-\theta}Y(t))$ is also locally H\"older continuous in time on $(t_0,T)$.

Now we can appeal to Lemma \ref{holder f}, which tells us that
\begin{equation*}
\left\{
\begin{array}{ll} \displaystyle\frac{dU}{dt} = AU + F(A^{-\theta} W) & \text{for}\ t>0, \\ 
U(t_0)=U_0 \in D(A^\theta), \end{array}
\right.
\end{equation*}
has a unique mild solution $U \in C([t_0,T];D(A^\theta)) \cap C((t_0,T];D(A^{1/\gamma}))$ given by
\begin{equation} \label{eq:var par 2}
U(t) = e^{A(t-t_0)}U_0 + \int_{t_0}^t e^{A(t-s)}F(A^{-\theta}W(s))ds.
\end{equation}
The right-hand side is contained in $D(A)$, hence also in $D(A^\theta)$, for $t > t_0$.
We deduce that 
\begin{equation*}
A^\theta U(t) = e^{A(t-t_0)} A^\theta U_0 + \int_{t_0}^t A^\theta e^{A(t-s)}F(A^{-\theta}W(s))ds,
\end{equation*}
and so by uniqueness $A^\theta U(t) = W(t)$.
It follows that $U$ is a mild solution of \eqref{abstract-odet_0}.
Uniqueness follows from the fact that the solution $W$ of \eqref{eq:var par1} is unique and from formula \eqref{eq:var par 2}.
\end{proof}

\begin{proof}[Proof of Theorem \ref{t:sg-theory}]
For $U_0 \in X$, \eqref{abstract-ode} has a unique solution
$U \in C([0,T];X) \cap C((0,T);D(A^\theta))$ by Proposition \ref{t:sg-theory1}.
By Proposition \ref{t:sg-theory2} with initial data $t_0 > 0$ and $U(t_0) \in D(A^\theta)$, we deduce that $U \in C((t_0,T];D(A^{1/\gamma}))$ for arbitrary $t_0 > 0$.
The conclusion follows.
\end{proof}

\begin{remark}
\label{smoothing bound final}
Examining the proof of Proposition \ref{t:sg-theory2} and Theorem \ref{t:sg-theory} and using Remarks \ref{smoothing bound 1} and \ref{smoothing bound-2}, we conclude that for $t > 0$ small enough, there is some constant $C$ such that
$\|A^{1/\gamma}U(t)\|_X \leq Ct^{-1}\|U_0\|_X$.
\end{remark}

\subsection{Well-posedness of Problem {\text{P}}$_\alpha$}

The definition of mild solution is from \cite{Ball77}.

\begin{definition}  \label{d:mild}
Let $T>0$, $\alpha\in[0,1]$. 
A function $\zeta\in C([0,T];\mathcal{H}_0)$ is called a mild solution of (\ref{abstract-p}) (and Problem {\textbf{P}}$_\alpha$, $\alpha\in[0,1]$) if and only if $\mathcal{F}(\zeta(\cdot))\in L^1(0,T;\mathcal{H}_0)$ and $\zeta$ satisfies the variation of constants formula for all $t\in[0,T],$
\begin{equation}  \label{mild}
\zeta(t)=e^{A_{\alpha}t}\zeta_0 + \int_0^t e^{A_{\alpha}(t-s)}\mathcal{F}(\zeta(s)) ds.
\end{equation}
The mild solution is called a global mild solution if it is a mild solution on $[0,T]$ for all $T>0.$
\end{definition}

\begin{theorem}  \label{t:strong-solns}
Assume $\alpha\in [0,1]$.
Let $T>0$ and $\zeta_0=(u_0,u_1,\gamma_0,\gamma_1)\in\mathcal{H}_0$. 
There exists a unique mild solution to Problem {\textbf{P}}$_\alpha$ given by (\ref{mild}) satisfying the additional regularity,
\begin{equation}  \label{regularity}
\begin{array}{ll}
\zeta\in C([0,T];\mathcal{H}_0) \cap C((0,T];D(A_\alpha)) \cap C^1((0,T];\mathcal{H}_0) & \text{if}~~\alpha > 0,  \\
\zeta\in C([0,T];\mathcal{H}_0) \cap C((0,T];D(A_0^{1/2})) & \text{if}~~\alpha = 0. 
\end{array}
\end{equation}
For any mild solution to Problem {\textbf{P}}$_\alpha$, $\alpha\in[0,1]$, on $[0,T]$, the map 
\begin{equation}  \label{C1-map}
t \mapsto \mathcal{E}(t) := \|\zeta(t)\|^2_{\mathcal{H}_0} + 2\int_\Omega F(u(t)) dx + 2\int_\Gamma G(u(t)) d\sigma  \quad \text{is} \ C^1([0,T])
\end{equation}
and the energy equation,
\begin{align}
\frac{d}{dt} \mathcal{E} + 2\omega\|\nabla u_t\|^2_{L^2(\Omega)} + 2\|u_t\|^2_{L^2(\Omega)} + 2\alpha\omega\|\nabla_\Gamma u_t\|^2_{L^2(\Gamma)} + 2\|u_t\|^2_{L^2(\Gamma)} = 0 \label{energy-p}
\end{align} 
holds for almost all $t\in[0,T]$. 
Furthermore, there holds, for any $T>0$ and for all $t\in[0,T]$,
\begin{equation}  \label{wkest-1}
\|\zeta(t)\|_{\mathcal{H}_0} \le Q(\|\zeta_0\|_{\mathcal{H}_0}).
\end{equation}
Therefore, $T=+\infty$ and any mild solution is globally bounded in $\mathcal{H}_0.$
\end{theorem}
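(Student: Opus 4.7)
The plan is to prove existence, uniqueness, and the energy identity by combining the abstract semigroup result of Theorem \ref{t:sg-theory} with the Lipschitz property from Lemma \ref{t:operator-F} and the explicit dissipation computation in Remark \ref{r:adjoint}. The global bound will then follow from the sign conditions \eqref{cons-4}, \eqref{cons-7} together with the growth bounds \eqref{cons-5}, \eqref{cons-8}, and the blow-up alternative.

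First, I would establish local existence and the stated regularity. By Lemma \ref{t:operator-F}, $\mathcal{F}:\mathcal{H}_0\to\mathcal{H}_0$ is locally Lipschitz, so $\mathcal{F}:D(A_\alpha^\theta)\to\mathcal{H}_0$ is locally Lipschitz with $\theta=0$. For $\alpha>0$, Lemma \ref{t:operator-A}(2) gives analyticity of $e^{A_\alpha t}$, and Theorem \ref{t:sg-theory}(2) (with $\gamma=1$) yields a unique mild solution in $C([0,T];\mathcal{H}_0)\cap C((0,T];D(A_\alpha))$; the $C^1((0,T];\mathcal{H}_0)$ regularity follows from standard analytic semigroup theory, since the right-hand side $A_\alpha\zeta+\mathcal{F}(\zeta)$ is continuous in $\mathcal{H}_0$ on $(0,T]$. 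For $\alpha=0$, Lemma \ref{t:operator-A}(1) supplies the Gevrey bound $\|A_0 e^{A_0 t}\|\le C/t^2$, so Theorem \ref{t:sg-theory}(1) with $\gamma=2$ and $\theta=0<1/2$ gives $\zeta\in C([0,T];\mathcal{H}_0)\cap C((0,T];D(A_0^{1/2}))$, together with the blow-up alternative $\lim_{t\to T^-}\|\zeta(t)\|_{\mathcal{H}_0}=+\infty$ whenever $T<\infty$.

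Next I would derive the energy identity. Using Remark \ref{r:adjoint}, the dissipation of $A_\alpha$ on $D(A_\alpha)$ is
\begin{equation*}
\langle A_\alpha\zeta,\zeta\rangle_{\mathcal{H}_0} = -\omega\|\nabla v\|^2_{L^2(\Omega)} - \|v\|^2_{L^2(\Omega)} - \alpha\omega\|\nabla_\Gamma\delta\|^2_{L^2(\Gamma)} - \|\delta\|^2_{L^2(\Gamma)},
\end{equation*}
while $\langle\mathcal{F}(\zeta),\zeta\rangle_{\mathcal{H}_0} = -\langle f(u),u_t\rangle_{L^2(\Omega)} - \langle g(u_{\mid\Gamma}),u_{t\mid\Gamma}\rangle_{L^2(\Gamma)}$, which along sufficiently regular trajectories equals $-\frac{d}{dt}\bigl[\int_\Omega F(u)\,dx + \int_\Gamma G(u_{\mid\Gamma})\,d\sigma\bigr]$. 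For $\alpha>0$, the $C^1((0,T];\mathcal{H}_0)$ regularity allows pairing $\frac{d\zeta}{dt}=A_\alpha\zeta+\mathcal{F}(\zeta)$ with $\zeta$ directly, yielding \eqref{energy-p} pointwise on $(0,T]$ and continuity of $\mathcal{E}$ down to $t=0$ from the strong continuity at zero. For $\alpha=0$, I would approximate $\zeta_0\in\mathcal{H}_0$ by a sequence $\zeta_0^n\in D(A_0)$ (dense by Lemma \ref{t:operator-A}), whose corresponding strong solutions $\zeta^n$ satisfy \eqref{energy-p} classically; continuous dependence from Theorem \ref{t:sg-theory} and the integrated form of the energy equation allow passage to the limit to obtain \eqref{energy-p} for a.e.\ $t$ and the $C^1$ statement \eqref{C1-map}. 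This passage from the smooth regime to the mild regime is the main technical subtlety, since Ball's framework \cite{Ball77} must be invoked to transfer the energy identity.

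Finally, the global bound \eqref{wkest-1}. Integrating \eqref{energy-p} and discarding the non-negative dissipation yields $\mathcal{E}(t)\le\mathcal{E}(0)$ for all $t\in[0,T]$. The lower bounds \eqref{cons-4} and \eqref{cons-7} give
\begin{equation*}
\mathcal{E}(t) \ge \mu\|\zeta(t)\|^2_{\mathcal{H}_0} - C, \qquad \mu := \min\{\mu_1,\mu_2\}>0,
\end{equation*}
after absorbing the $-(1-\mu_i)$ factors into the $H^1$-norms of $u$ and $u_{\mid\Gamma}$. The upper bounds \eqref{cons-5} and \eqref{cons-8} give $\mathcal{E}(0)\le Q(\|\zeta_0\|_{\mathcal{H}_0})$ for some monotone $Q$. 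Combining these yields $\|\zeta(t)\|^2_{\mathcal{H}_0}\le\mu^{-1}(Q(\|\zeta_0\|_{\mathcal{H}_0})+C)$, uniformly in $t\in[0,T]$ and independently of $\alpha$. Since this a priori bound rules out blow-up, the blow-up alternative forces $T=+\infty$, completing the proof.
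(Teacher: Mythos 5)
Your proposal is correct and follows essentially the same route as the paper: Theorem \ref{t:sg-theory} combined with Lemmas \ref{t:operator-A} and \ref{t:operator-F} for local existence and regularity, Ball's framework for the energy identity \eqref{energy-p}, and integration of \eqref{energy-p} together with \eqref{cons-4}, \eqref{cons-5}, \eqref{cons-7}, \eqref{cons-8} for the a priori bound and the blow-up alternative. The only caveat is your assertion that the approximating solutions with data in $D(A_0)$ satisfy \eqref{energy-p} \emph{classically}: the paper deliberately avoids claiming strong solutions (the regularization only reaches $D(A_0^{1/2})$ when $\alpha=0$), and both you and the paper ultimately lean on the approximation argument of \cite[Theorem 3.6]{Ball04} rather than on classical differentiability.
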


\begin{proof}
First we show the local existence of a unique local mild solution to Problem {\textbf{P}}$_\alpha$, $\alpha\in[0,1].$
By Lemma \ref{t:operator-A}, the operator $A_{\alpha}$ with domain $D(A_{\alpha})\subset\mathcal{H}_0$ satisfies \eqref{eq:gevrey estimate} with either $\gamma = 1$ (if $\alpha > 0$) or $\gamma = 2$ (if $\alpha = 0$).
By Lemma \ref{t:operator-F}, the functional $\mathcal{F}:D(A_{\alpha}^\theta)\subset\mathcal{H}_0 \rightarrow \mathcal{H}_0$ is locally Lipschitz continuous for any $0 \leq \theta < 1$.
It follows from Theorem \ref{t:sg-theory} that for each $\zeta_0=(u_0,u_1,\gamma_0,\gamma_1)\in\mathcal{H}_0$, there exists $T>0$ and a unique mild solution given by (\ref{mild}) to Problem {\textbf{P}}$_\alpha$ such that \eqref{regularity} holds.

To show that the assertion \eqref{C1-map} holds, we appeal to the continuity properties of mild solutions (see \eqref{regularity}) and the sequential weak continuity of the functional $\mathcal{F}$ on $\mathcal{H}_0$ (this fact follows with simple modifications to the proof of \cite[Lemma 3.3]{Ball04} for example). 
The energy identity \eqref{energy-p} can be derived as in the proof of \cite[Theorem 3.6]{Ball04}.
(Of course, formally, the energy identity may be obtained by multiplying (\ref{pde}) by $u_t$ in $L^2(\Omega)$.)

To show (\ref{wkest-1}), integrate (\ref{energy-p}) over $(0,t)$ and apply (\ref{cons-4}), (\ref{cons-7}), (\ref{cons-5}), and (\ref{cons-8}) to obtain 
\begin{align}
\|\zeta(t)\|^2_{\mathcal{H}_0} + 2\int_0^t \|u_t(\tau)\|^2_{H^1(\Omega)} d\tau + 2\int_0^t\|u_t(\tau)\|^2_{H^1(\Gamma)} d\tau \le Q(\|\zeta_0\|_{\mathcal{H}_0}),   \label{wkest-2}
\end{align}
from which (\ref{wkest-1}) clearly follows.
This finishes the proof.
\end{proof}

The following proposition can be used to show that (mild) solutions to Problem {\textbf{P}}$_{\alpha}$, $\alpha\in[0,1]$, depend continuously on initial data. 

\begin{proposition}  \label{t:wk-unique}
Let $T>0$, $R>0$ and $\zeta_{01},\zeta_{02}\in\mathcal{H}_0$ be such that $\|\zeta_{01}\|_{\mathcal{H}_0}\le R$ and $\|\zeta_{02}\|_{\mathcal{H}_0}\le R$.
Any two mild solutions, $\zeta^1(t)$ and $\zeta^2(t)$, to Problem {\textbf{P}}$_\alpha$, $\alpha\in[0,1]$, on $[0,T]$ corresponding to the initial data $\zeta_{01}$ and $\zeta_{02}$, respectively, satisfy, for all $t\in[0,T]$,
\begin{equation}  \label{contdep}
\|\zeta^1(t)-\zeta^2(t)\|_{\mathcal{H}_0} \le e^{Q(R)t} \|\zeta_{01} - \zeta_{02}\|_{\mathcal{H}_0}.
\end{equation}
\end{proposition}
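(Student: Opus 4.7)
The plan is to run a standard Gronwall argument directly on the variation-of-constants formula. Set $\bar\zeta(t) := \zeta^1(t) - \zeta^2(t)$ and $\bar\zeta_0 := \zeta_{01} - \zeta_{02}$. Since each $\zeta^i$ is a mild solution in the sense of Definition \ref{d:mild}, subtracting the two identities \eqref{mild} gives
\begin{equation*}
\bar\zeta(t) = e^{A_\alpha t}\bar\zeta_0 + \int_0^t e^{A_\alpha(t-s)}\bigl[\mathcal{F}(\zeta^1(s)) - \mathcal{F}(\zeta^2(s))\bigr]\, ds.
\end{equation*}

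Next I would exploit two ingredients already established in the paper. First, by Lemma \ref{t:operator-A}, the semigroup $e^{A_\alpha t}$ is a $C_0$-semigroup of contractions on $\mathcal{H}_0$, so $\|e^{A_\alpha t}\|_{\mathcal{L}(\mathcal{H}_0)} \le 1$ for every $t \ge 0$ and every $\alpha\in[0,1]$. Second, the a priori estimate \eqref{wkest-1} from Theorem \ref{t:strong-solns} yields
\begin{equation*}
\|\zeta^1(s)\|_{\mathcal{H}_0} + \|\zeta^2(s)\|_{\mathcal{H}_0} \le Q(R), \qquad s\in[0,T],
\end{equation*}
so both trajectories remain in a ball of radius $Q(R)$ in $\mathcal{H}_0$ on which, by Lemma \ref{t:operator-F}, $\mathcal{F}$ is Lipschitz continuous with constant $Q(R)$ (possibly enlarging $Q$). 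Combining these,
\begin{equation*}
\|\bar\zeta(t)\|_{\mathcal{H}_0} \le \|\bar\zeta_0\|_{\mathcal{H}_0} + Q(R)\int_0^t \|\bar\zeta(s)\|_{\mathcal{H}_0}\, ds.
\end{equation*}

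Finally, Gronwall's inequality applied to the continuous function $t\mapsto\|\bar\zeta(t)\|_{\mathcal{H}_0}$ yields exactly \eqref{contdep}. I do not anticipate any serious obstacle: the only nontrivial analytic inputs have been isolated into Lemmas \ref{t:operator-A} (contractivity, uniform in $\alpha$) and \ref{t:operator-F} (local Lipschitz continuity of $\mathcal{F}$ via the sharp $H^1(\Omega)\hookrightarrow L^6(\Omega)$ and $H^1(\Gamma)\hookrightarrow L^p(\Gamma)$ embeddings), and the a priori bound \eqref{wkest-1} needed to convert ``local Lipschitz'' into a usable global constant on $[0,T]$ is already in hand. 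The only point requiring any care is verifying that the Lipschitz constant from Lemma \ref{t:operator-F} can indeed be written as an increasing function $Q$ of $R$ alone (independent of $\alpha$), which is immediate from the proof of that lemma since all embedding constants are $\alpha$-independent.
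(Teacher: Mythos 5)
Your argument is correct, but it is a genuinely different route from the one in the paper. The paper proves \eqref{contdep} by an energy (multiplier) method: it writes out the initial--boundary value problem satisfied by the difference $\bar u=u^1-u^2$, multiplies by $2\bar u_t$ in $L^2(\Omega)$, estimates the nonlinear products $\langle f(u^1)-f(u^2),\bar u_t\rangle_{L^2(\Omega)}$ and $\langle g(u^1)-g(u^2),\bar u_t\rangle_{L^2(\Gamma)}$ via H\"older and Sobolev embeddings together with \eqref{wkest-1}, absorbs the resulting $\|\bar u_t\|^2_{H^1}$ terms into the strong damping by a choice of $\ep$, and then applies Gronwall to $\|\bar\zeta\|^2_{\mathcal{H}_0}$. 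You instead work directly with the Duhamel identity \eqref{mild}, using only the contractivity of $e^{A_\alpha t}$ from Lemma \ref{t:operator-A}, the local Lipschitz continuity of $\mathcal{F}$ from Lemma \ref{t:operator-F} upgraded to a global constant $Q(R)$ on the invariant ball furnished by \eqref{wkest-1}, and the integral form of Gronwall. Your route is shorter and arguably more robust: it operates on exactly the object a mild solution is defined to be (no justification of the multiplier computation for non-classical solutions is needed), and it is manifestly uniform in $\alpha\in[0,1]$ --- in particular it avoids the delicate absorption step in the paper, where the requirement $2\alpha\omega-Q(R)\tfrac{1}{4\ep}>0$ degenerates at $\alpha=0$ and the boundary damping cannot be used to absorb $\|\bar u_t\|^2_{H^1(\Gamma)}$. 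What the paper's energy method buys in exchange is the explicit dissipative terms on the left-hand side of the differential inequality, which is the template reused later (e.g.\ in the $\alpha$-contraction argument of Section \ref{s:global} and in Lemma \ref{t:to-H2}), and it would survive in settings where $\mathcal{F}$ fails to be locally Lipschitz on the phase space. As a proof of the stated proposition alone, your version is complete.
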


\begin{proof}
To prove the continuous dependence in initial conditions, let $\zeta_{01} = (u_{01},u_{11},\gamma_{01},\gamma_{11}), \zeta_{02} = (u_{02},u_{12},\gamma_{02},\gamma_{12})\in\mathcal{H}_0$ be such that $\|\zeta_{01}\|_{\mathcal{H}_0}\le R$ and $\|\zeta_{02}\|_{\mathcal{H}_0}\le R$ for some $R>0$. 
Let $\zeta^1(t)=(u^1(t),u_t^1(t),u_{\mid\Gamma}^1(t),u_{t\mid\Gamma}^1(t))$ and, respectively, $\zeta^2(t)=(u^2(t),u_t^2(t),u_{\mid\Gamma}^2(t),u_{t\mid\Gamma}^2(t))$ denote the corresponding solutions of Problem {\textbf{P}}$_\alpha$, $\alpha\in[0,1]$, on $[0,T]$ with the initial data $\zeta_{01}$ and $\zeta_{02}$. 
For all $t\in[0,T]$, set
\begin{align}
\bar\zeta(t) & := \zeta^1(t) - \zeta^2(t)  \notag \\
& = \left( u^1(t),u_t^1(t),u_{\mid\Gamma}^1(t),u_{t\mid\Gamma}^1(t) \right) - \left( u^2(t),u^2_t(t),u^2_{\mid\Gamma}(t),u^2_{t\mid\Gamma}(t) \right)  \notag \\
& =: \left( \bar u(t),\bar u_t(t),\bar u_{\mid\Gamma}(t),\bar u_{t\mid\Gamma}(t) \right),  \notag
\end{align}
and 
\begin{align}
\bar\zeta_0 & := \zeta_{01} - \zeta_{02}  \notag \\ 
& = \left( u_{01},u_{11},\gamma_{01},\gamma_{11} \right) - \left( u_{02},u_{12},\gamma_{02},\gamma_{12} \right) \notag \\
& = \left( u_{01}-u_{02},u_{11}-u_{12},\gamma_{01}-\gamma_{02},\gamma_{11}-\gamma_{12} \right)  \notag \\
& =: (\bar u_0,\bar u_1,\bar \gamma_0,\bar \gamma_1).  \notag
\end{align}
Then $\bar u$ satisfies the IBVP

\begin{equation}  \label{cde-1}
\left\{ \begin{array}{ll} \bar u_{tt}-\omega\Delta \bar u_t+\bar u_t-\Delta \bar u+\bar u+f(u^1)-f(u^2) = 0 & \text{in}\quad (0,T)\times\Omega \\ 
\bar u_{tt}+\partial_{\mathbf{n}}(\bar u+\omega\bar u_t)-\alpha\omega\Delta_\Gamma \bar u_t+\bar u_t-\Delta_\Gamma \bar u + \bar u + g(u^1)-g(u^2) = 0 & \text{on} \quad (0,T)\times\Gamma \\ 
\bar u(0,\cdot)=\bar u_0,  \quad \bar u_t(0,\cdot)=\bar u_1 & \text{at} \quad \{0\}\times\Omega \\
\bar u_{\mid\Gamma}(0,\cdot)=\bar \gamma_0, \quad \bar u_{t\mid\Gamma}(0,\cdot)=\bar \gamma_1 & \text{at} \quad \{0\}\times\Gamma.
\end{array} \right.
\end{equation}
Multiply (\ref{cde-1})$_1$ by $2\bar u_t$ in $L^2(\Omega)$ to yield, for almost all $t\in[0,T]$,
\begin{align}
\frac{d}{dt} & \|\bar\zeta\|^2_{\mathcal{H}_0} + 2\omega\|\bar u_t\|^2_{H^1(\Omega)} + 2\alpha\omega\|\bar u_t\|^2_{H^1(\Gamma)}  \notag \\ 
& \le -2\langle f(u^1)-f(u^2),\bar u_t \rangle_{L^2(\Omega)} - 2\langle g(u^1)-g(u^2),\bar u_t \rangle_{L^2(\Gamma)}.  \label{cde-2} 
\end{align}
For terms on the right-hand side, first consider when $\alpha\in[0,1]$,
\begin{align}
-2\langle f(u^1)-f(u^2),\bar u_t \rangle_{L^2(\Omega)} & \le 2\|(f(u^1)-f(u^2))\bar u_t\|_{L^1(\Omega)}  \notag \\ 
& \le 2\|f(u^1)-f(u^2)\|_{L^{6/5}(\Omega)}\|\bar u_t\|_{L^{6}(\Omega)}  \notag \\ 
& \le 2\ell_1\|\bar u\|_{L^6(\Omega)}\left( 1+\|u^1\|^2_{L^{2\cdot\frac{3}{2}}(\Omega)}+\|u^2\|^2_{L^{2\cdot\frac{3}{2}}(\Omega)} \right)\|\bar u_t\|_{L^6(\Omega)}  \notag \\ 
& \le 2\ell_1 \left( 1+\|u^1\|^4_{H^1(\Omega)}+\|u^2\|^4_{H^1(\Omega)} \right) \left( \ep\|\bar u\|^2_{H^1(\Omega)}+\frac{1}{4\ep}\|\bar u_t\|^2_{H^1(\Omega)} \right)  \notag \\ 
& \le Q(R) \left( \ep\|\bar u\|^2_{H^1(\Omega)}+\frac{1}{4\ep}\|\bar u_t\|^2_{H^1(\Omega)} \right),  \label{cde-3}
\end{align}
where the last inequality follows from the global bound on the weak solutions (\ref{wkest-1}), and $\ep>0$ will be chosen later.
In a similar fashion, we find for $g$, 
\begin{align}
-2\langle g(u^1)-g(u^2),\bar u_t \rangle_{L^2(\Gamma)} & \le Q(R) \left( \ep\|\bar u\|^2_{H^1(\Gamma)}+\frac{1}{4\ep}\|\bar u_t\|^2_{H^1(\Gamma)} \right).  \label{cde-4}
\end{align}
We now choose $\ep>0$ so that $2\alpha\omega-Q(R)\frac{1}{4\ep}>0$; hence eliminating the terms with $\|\bar u_t\|^2_{H^1(\Omega)}$ and $\|\bar u_t\|^2_{H^1(\Gamma)}$ from (\ref{cde-2}).
Combining the remaining terms in (\ref{cde-2})--(\ref{cde-4}) leaves us with 
\begin{align}
\frac{d}{dt} \|\bar\zeta\|^2_{\mathcal{H}_0} & \le Q(R) \left( \|\bar u\|^2_{H^1(\Omega)} + \|\bar u\|^2_{H^1(\Gamma)} \right)  \notag \\ 
& \le Q(R) \|\bar\zeta\|^2_{\mathcal{H}_0}.  \label{cde-5} 
\end{align}
Integrating (\ref{cde-5}) over $[0,T]$, we arrive at (\ref{contdep}).
\end{proof}

We formalize the dynamical system associated with Problem {\textbf{P}}$_\alpha$.

\begin{corollary}  \label{sf-a}
Let $\zeta_0=(u_0,u_1,\gamma_0,\gamma_1)\in\mathcal{H}_0$ and let $u$ be the unique (mild) solutions of Problem {\textbf{P}}$_\alpha$, $\alpha\in[0,1]$. 
For each $\alpha\in[0,1]$, define the family of maps $S_\alpha=(S_\alpha(t))_{t\geq 0}$ by 
\begin{align}
& S_\alpha(t)\zeta_0(x):=  \notag \\
& (u(t,x,u_0,u_1,\gamma_0,\gamma_1), u_t(t,x,u_0,u_1,\gamma_0,\gamma_1), u_{\mid\Gamma}(t,x,u_0,u_1,\gamma_0,\gamma_1), u_{t\mid\Gamma}(t,x,u_0,u_1,\gamma_0,\gamma_1))  \notag
\end{align}
is the semiflow on $\mathcal{H}_0$ generated by Problem {\textbf{P}}$_\alpha$. 
The operators $S(t)$ satisfy
\begin{enumerate}
	\item $S_\alpha(t+s)=S_\alpha(t)S_\alpha(s)$ for all $t,s\geq 0$.
	\item $S_\alpha(0)=I_{\mathcal{H}_0}$ (the identity on $\mathcal{H}_0$).
	\item $S_\alpha(t)\zeta_0\rightarrow S_\alpha(t_0)\zeta_0$ for every $\zeta_0\in\mathcal{H}_0$ when $t\rightarrow t_0$.
\end{enumerate}

Additionally, each mapping $S_\alpha(t):\mathcal{H}_0\rightarrow\mathcal{H}_0$ is Lipschitz continuous, uniformly in $t$ on compact intervals; i.e., for each $T>0$ and for all $\zeta_{01},\zeta_{02}\in\mathcal{H}_0$ in which $\|\zeta_{01}\|_{\mathcal{H}_0}\le R$ and $\|\zeta_{02}\|_{\mathcal{H}_0}\le R$, for all $t\in[0,T]$,
\begin{equation}  \label{SLipcont}
\|S_\alpha(t)\zeta_{01} - S_\alpha(t)\zeta_{02}\|_{\mathcal{H}_0} \le e^{Q(R)t}\|\zeta_{01} - \zeta_{02}\|_{\mathcal{H}_0}.
\end{equation}
\end{corollary}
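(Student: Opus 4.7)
The plan is to verify the three semiflow properties together with the Lipschitz estimate by assembling results already established, with almost no new work required. First, Theorem \ref{t:strong-solns} produces, for each $\zeta_0 \in \mathcal{H}_0$ and each $\alpha \in [0,1]$, a unique global mild solution $\zeta \in C([0,\infty); \mathcal{H}_0)$, so the map $S_\alpha(t)\zeta_0 := \zeta(t)$ is unambiguously defined on all of $\mathcal{H}_0$ for every $t \geq 0$. Property (2) is immediate from the initial condition in (\ref{abstract-p}): $S_\alpha(0)\zeta_0 = \zeta(0) = \zeta_0$. Property (3), continuity in $t$, is a direct consequence of the regularity $\zeta \in C([0,\infty); \mathcal{H}_0)$ stated in (\ref{regularity}).

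For property (1), the semigroup identity, I would exploit uniqueness together with the autonomous character of (\ref{abstract-p}). Fix $\zeta_0 \in \mathcal{H}_0$ and $s \geq 0$, and set $\eta(t) := S_\alpha(t+s)\zeta_0$ for $t \geq 0$. Starting from the variation-of-constants formula (\ref{mild}) evaluated at time $t+s$, splitting the Duhamel integral at $\sigma = s$, changing variables $\tau = \sigma - s$ in the remaining piece, and invoking the semigroup law $e^{A_\alpha(t+s-\sigma)} = e^{A_\alpha(t-\tau)}$ (which is available thanks to Lemma \ref{t:operator-A}), I obtain
\[
\eta(t) = e^{A_\alpha t}S_\alpha(s)\zeta_0 + \int_0^t e^{A_\alpha(t-\tau)}\mathcal{F}(\eta(\tau))\, d\tau.
\]
Thus $\eta$ is a mild solution of (\ref{abstract-p}) on $[0,\infty)$ with initial datum $S_\alpha(s)\zeta_0$, and the uniqueness clause of Theorem \ref{t:strong-solns} forces $\eta(t) = S_\alpha(t)S_\alpha(s)\zeta_0$, establishing the semigroup identity.

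Finally, the uniform Lipschitz bound (\ref{SLipcont}) is nothing more than Proposition \ref{t:wk-unique} rewritten in the notation of the semiflow: the two mild solutions $\zeta^1, \zeta^2$ on $[0,T]$ with initial data $\zeta_{01}, \zeta_{02}$ are precisely $S_\alpha(\cdot)\zeta_{01}$ and $S_\alpha(\cdot)\zeta_{02}$, and (\ref{contdep}) then reads exactly as (\ref{SLipcont}). I do not expect any genuine obstacle in this argument; the only point requiring mild care is the correct change of variables in the Duhamel integral when verifying property (1), and this step is entirely standard for $C_0$-semigroups.
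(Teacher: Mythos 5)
Your proposal is correct and follows essentially the same route as the paper: properties (2) and (3) from the definition and continuity of mild solutions, and the Lipschitz bound (\ref{SLipcont}) read off directly from (\ref{contdep}) in Proposition \ref{t:wk-unique}. The only difference is that for the semigroup identity (1) the paper simply cites standard references, whereas you write out the standard Duhamel-splitting-plus-uniqueness argument explicitly; that argument is correct and is exactly what those references contain.
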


\begin{proof}
The semigroup properties (1) and (2) are well-known and apply to a general class of abstract Cauchy problems possessing many applications (see \cite{Babin&Vishik92,Morante79,Goldstein85,Tanabe79}; in particular, a proof of property (1) is given in \cite[\S1.2.4]{Milani&Koksch05}). 
The continuity in $t$ described by property (3) follows from the definition of the solution (this also establishes strong continuity of the operators when $t_0=0$). 
The continuity property (\ref{SLipcont}) follows from (\ref{contdep}).
\end{proof}

Due to the complicated nature of the domain of the operator $A_\alpha$, we will not report any results on strong solutions to Problem {\textbf{P}}$_\alpha$ (cf. e.g. \cite[Theorem 2.5.6]{Zheng04}).

\subsection{Dissipativity}  \label{s:dissipativity}

The main result depends on the following proposition. 
It can be found in \cite[Lemma 2.7] {Belleri&Pata01}.

\begin{proposition}  \label{absset}
Let $X$ be an arbitrary Banach space, and $Z\subset C([0,\infty);X)$. 
Suppose that there is a functional $E:X\rightarrow\mathbb{R}$ such that, for every $z\in Z$,
\[
\sup_{t\geq 0} E(z(t))\geq -r \quad \text{and} \quad E(z(0))\leq R
\]
for some $r,R\geq 0$. 
In addition, assume that the map $t\mapsto E(z(t))$ is $C^1([0,\infty))$ for every $z\in Z$ and that for almost all $t\geq 0$, the differential inequality holds
\[
\frac{d}{dt} E(z(t)) + m\|z(t)\|^2_X \le C,
\]
for some $m>0$, $C\ge 0$, both independent of $z\in Z$. Then, for every $\iota>0$, there exists $t_0\ge 0$, depending on $R$ and $\iota$, such that for every $z\in Z$ and for all $t\ge t_0$,
\[
E(z(t)) \le \sup_{\xi\in X}\{ E(\xi):m\|\xi\|^2_X\le C+\iota \}.
\]
Furthermore, $t_0=(r+R)/\iota$.
\end{proposition}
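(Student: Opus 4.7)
The plan is to combine a direct contradiction argument with a trapping (invariance) argument. Write $S := \sup\{E(\xi) : m\|\xi\|_X^2 \le C+\iota\}$ and $t_0 := (r+R)/\iota$, so that the claim reduces to showing $E(z(t)) \le S$ for every $t \ge t_0$ and every $z \in Z$.

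\emph{First (entrance into the sublevel set):} I would show that there exists some $t^* \in [0,t_0]$ with $E(z(t^*)) \le S$. Suppose on the contrary that $E(z(t)) > S$ throughout $[0,t_0]$. At any such $t$ one cannot have $m\|z(t)\|_X^2 \le C+\iota$, for this would place $z(t)$ in the set over which $S$ is a supremum and force $E(z(t)) \le S$. Hence $m\|z(t)\|_X^2 > C+\iota$ on $[0,t_0]$, and the differential hypothesis yields
\begin{equation*}
\frac{d}{dt}E(z(t)) \le C - m\|z(t)\|_X^2 < -\iota \quad \text{a.e. on } [0,t_0].
\end{equation*}
Integrating and using $E(z(0)) \le R$ gives $E(z(t_0)) \le R - \iota t_0 = -r$. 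Since $t \mapsto E(z(t))$ is $C^1$, the strict inequality $E(z(t)) > S$ propagates to $[0,t_0+\eta]$ for some $\eta > 0$, and the same integration yields $E(z(t_0+\eta)) \le -r - \iota\eta < -r$, contradicting the lower bound $E(z(t)) \ge -r$ enforced along the trajectory by the hypothesis on $\sup_{t\ge 0} E(z(t))$.

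\emph{Second (invariance of $\{E \le S\}$):} Once $E(z(t^*)) \le S$ is reached, I would show it persists: $E(z(t)) \le S$ for all $t \ge t^*$. Otherwise, the $C^1$ continuity of $t \mapsto E(z(t))$ produces a first time $t_1 \ge t^*$ with $E(z(t_1)) = S$ together with an interval $(t_1,t_2]$ on which $E(z(t)) > S$. Repeating the reasoning of the first step gives $m\|z(t)\|_X^2 > C+\iota$ on $(t_1,t_2)$, hence $\frac{d}{dt}E(z(t)) < -\iota$ there, and integrating from $t_1$ to $t_2$ yields $E(z(t_2)) < S$, the desired contradiction. Combining the two steps, $E(z(t)) \le S$ for every $t \ge t^*$, and therefore for every $t \ge t_0$, as claimed.

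The principal subtlety, and the only real obstacle, is the correct use of the lower-bound hypothesis: to close the contradiction in the first step one actually needs $E(z(t)) \ge -r$ for every $t \ge 0$ rather than merely $\sup_{t\ge 0}E(z(t)) \ge -r$. This matches the way the hypothesis is verified in the intended applications, where $E$ dominates a norm and is uniformly bounded below along trajectories in $Z$. The remainder of the argument is a routine integration of the differential inequality.
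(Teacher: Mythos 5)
Your proof is correct and is the standard entrance-plus-trapping argument; note that the paper offers no proof of Proposition \ref{absset} at all, deferring to \cite[Lemma 2.7]{Belleri&Pata01}, so there is no in-paper argument to compare against. The substantive point is the ``subtlety'' you flag at the end, and you are right to insist on it: with the hypothesis read literally as $\sup_{t\geq 0}E(z(t))\geq -r$, the proposition is false. Take $X=\mathbb{R}$, $Z=\{z\}$ with $z(t)=2e^{-t}$, $E(\xi)=\tfrac{1}{2}\xi^{2}-2$, $m=1$, $C=0$, $r=R=0$, and $\iota=1$: every hypothesis holds (the differential inequality is an identity), $t_{0}=(r+R)/\iota=0$, yet $E(z(0))=0$ while $\sup\{E(\xi):m\|\xi\|_{X}^{2}\leq C+\iota\}=-\tfrac{3}{2}$. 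What your argument genuinely uses --- and what is needed --- is the pointwise bound $E(z(t))\geq -r$ for all $t\geq 0$ (i.e., $\inf$ in place of $\sup$), and that is exactly what is available in the application: the lower bound \eqref{stest-11} gives $\Psi(t)\geq -2c_{2}-2c_{4}$ for every $t\geq 0$, not merely for the supremum. With that reading, both of your steps are complete: the entrance step correctly converts $E(z(t))>S$ into $m\|z(t)\|_{X}^{2}>C+\iota$ and hence $\frac{d}{dt}E(z(t))<-\iota$ almost everywhere, which drives $E$ strictly below $-r$ just past $t_{0}$; and the trapping step correctly shows the sublevel set $\{E\leq S\}$ cannot be exited, since at any first crossing the same mechanism makes $E$ strictly decrease.
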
 

\begin{lemma}  \label{t:abs-set}
There exists $R_0>0$ so that the set 
\begin{equation}  \label{abs-set}
\mathcal{B}_0:=\{\zeta\in\mathcal{H}_0:\|\zeta\|_{\mathcal{H}_0}\le R_0\}
\end{equation}
satisfies the following: for all $\alpha\in[0,1]$, $\omega\in(0,1]$, and for any bounded subset $B\subset\mathcal{H}_0$, there is $t_0=t_0(\|B\|_{\mathcal{H}_0},c_i)\ge0$, $i=1,2,3,4$, such that $S_\alpha(t)B\subset\mathcal{B}_0$ for all $t\ge t_0.$
\end{lemma}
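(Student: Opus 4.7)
The plan is to apply Proposition \ref{absset} to the perturbed energy functional
\[
\Lambda(\zeta) := \mathcal{E}(\zeta) + 2\epsilon\Phi(\zeta), \qquad \Phi(\zeta) := \langle u, v\rangle_{L^2(\Omega)} + \langle \gamma, \delta\rangle_{L^2(\Gamma)},
\]
where $\zeta=(u,v,\gamma,\delta)\in\mathcal{H}_0$, $\mathcal{E}$ is the functional from \eqref{C1-map}, and $\epsilon>0$ is chosen small. First I would verify that $\Lambda$ is comparable to $\|\zeta\|^2_{\mathcal{H}_0}$ modulo additive constants. On the one hand, Cauchy--Schwarz--Young gives $|2\epsilon\Phi(\zeta)|\le \epsilon\|\zeta\|^2_{\mathcal{H}_0}$. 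On the other hand, (\ref{cons-4}), (\ref{cons-7}), (\ref{cons-5}), and (\ref{cons-8}) yield
\[
\min(\mu_1,\mu_2,1)\|\zeta\|^2_{\mathcal{H}_0} - 2(c_2+c_4) \;\le\; \mathcal{E}(\zeta) \;\le\; Q(\|\zeta\|_{\mathcal{H}_0}).
\]
Taking $0<\epsilon<\tfrac{1}{2}\min(\mu_1,\mu_2,1)$ then makes $\Lambda$ bounded below uniformly by $-2(c_2+c_4)$ and bounded above by a $Q(\|\zeta\|_{\mathcal{H}_0})$.

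The main computational step is the derivation of the dissipative differential inequality. The energy identity \eqref{energy-p} delivers
\[
\tfrac{d}{dt}\mathcal{E} = -2\omega\|\nabla u_t\|^2_{L^2(\Omega)}-2\|u_t\|^2_{L^2(\Omega)}-2\alpha\omega\|\nabla_\Gamma u_t\|^2_{L^2(\Gamma)}-2\|u_t\|^2_{L^2(\Gamma)}.
\]
For $\Phi$, I would test \eqref{pde} against $u$ in $L^2(\Omega)$ and \eqref{bc} against $u_{|\Gamma}$ in $L^2(\Gamma)$, integrate by parts, and observe that the two occurrences of the boundary flux $\int_\Gamma\partial_{\mathbf n}(u+\omega u_t)\,u\,d\sigma$ cancel thanks to the coupling. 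This produces
\begin{align*}
\tfrac{d}{dt}\Phi &= \|u_t\|^2_{L^2(\Omega)} + \|u_t\|^2_{L^2(\Gamma)} - \|u\|^2_{H^1(\Omega)} - \|u\|^2_{H^1(\Gamma)} \\
&\quad - \omega\langle\nabla u_t,\nabla u\rangle - \alpha\omega\langle\nabla_\Gamma u_t,\nabla_\Gamma u\rangle - \langle u_t,u\rangle_{L^2(\Omega)} - \langle u_t,u\rangle_{L^2(\Gamma)} \\
&\quad - \langle f(u),u\rangle_{L^2(\Omega)} - \langle g(u),u\rangle_{L^2(\Gamma)}.
\end{align*}
The sign conditions (\ref{cons-3}) and (\ref{cons-6}) merge the nonlinear terms with $-\|u\|^2_{H^1(\Omega)}$ and $-\|u\|^2_{H^1(\Gamma)}$ to give $-\mu_1\|u\|^2_{H^1(\Omega)}-\mu_2\|u\|^2_{H^1(\Gamma)}+c_1+c_3$. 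Young's inequality handles the indefinite cross terms, with the $u_t$-contributions absorbed by the dissipation already present in $\tfrac{d}{dt}\mathcal{E}$, and the $u$-contributions absorbed (using $\omega,\alpha\in(0,1]$) into the $-\epsilon\mu_i\|u\|^2_{H^1}$ terms provided $\epsilon$ is chosen small enough in terms of $\mu_1,\mu_2$ only. The result is
\[
\tfrac{d}{dt}\Lambda + m\|\zeta\|^2_{\mathcal{H}_0} \le C,
\]
with $m:=\min\bigl(\tfrac12,\epsilon\mu_1,\epsilon\mu_2\bigr)>0$ and $C:=2\epsilon(c_1+c_3)$, both independent of $\alpha\in[0,1]$ and $\omega\in(0,1]$.

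Having set up the abstract hypotheses, I would invoke Proposition \ref{absset} on the set $Z=\{t\mapsto S_\alpha(t)\zeta_0:\zeta_0\in B\}\subset C([0,\infty);\mathcal{H}_0)$ with $E=\Lambda$, $r=2(c_2+c_4)+\epsilon\|\zeta\|^2$-type lower bound, and $R=Q(\|B\|_{\mathcal{H}_0})$. For any fixed $\iota>0$, the conclusion furnishes a time $t_0=t_0(\|B\|_{\mathcal{H}_0},\iota)$ after which $\Lambda(\zeta(t))$ is dominated by $\sup\{\Lambda(\xi):m\|\xi\|^2_{\mathcal{H}_0}\le C+\iota\}$, a quantity controlled purely by the structural constants. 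The equivalence of $\Lambda$ with $\|\zeta\|^2_{\mathcal{H}_0}$ then converts this into the desired radius $R_0$, uniform in $\alpha$ and $\omega$.

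The principal obstacle is the rigorous justification of the $\Phi$-multiplier step for a mild solution: formally the calculation requires $u_{tt}\in L^2(\Omega)$ and the boundary trace of $u_{tt}$ in $L^2(\Gamma)$, which does not follow from the bare regularity \eqref{regularity} when $\alpha=0$. This is handled as in Theorem \ref{t:strong-solns} by exploiting the instantaneous smoothing \eqref{regularity} for $t>0$, or else by a standard density/approximation argument parallel to that yielding \eqref{C1-map}--\eqref{energy-p}; the final inequality for $\Lambda$, being integrated in time, requires nothing more.
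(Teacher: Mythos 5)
Your proposal is correct and follows essentially the same route as the paper: the functional $\Lambda$ is exactly the paper's $\Psi$ (up to renaming $\ep\mapsto 2\epsilon$), the differential inequality is obtained by the same combination of the energy identity \eqref{energy-p} with the multiplier $u$ (with the same cancellation of the boundary flux and the same use of \eqref{cons-3}, \eqref{cons-6} and Young's inequality), and the conclusion is drawn from Proposition \ref{absset} in the same way. The only cosmetic difference is in the allocation of the Young parameters for the cross terms (the paper takes $\ep\lesssim\mu_i\omega$, you keep $\epsilon$ independent of $\omega$), which does not change the argument or the uniformity of $R_0$.
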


\begin{proof}
Multiply equation (\ref{pde}) by $\ep u$ in $L^2(\Omega)$, where $\ep>0$ will be chosen later, and add the resulting differential identity to the ``energy'' identity (\ref{energy-p}) to obtain the following identity which holds for almost all $t\ge0,$
\begin{align}
\frac{d}{dt} & \left\{ \|\zeta\|^2_{\mathcal{H}_0} + \ep\langle u_t,u \rangle_{L^2(\Omega)} + \ep\langle u_t,u \rangle _{L^2(\Gamma)} + 2\int_\Omega F(u) dx + 2\int_\Gamma G(u) d\sigma \right\}  \notag \\ 
& + 2\omega\|\nabla u_t\|^2_{L^2(\Omega)} + 2\|u_t\|^2_{L^2(\Omega)} + \ep\omega\langle \nabla u_t,\nabla u \rangle_{L^2(\Omega)} + \ep\langle u_t,u \rangle_{L^2(\Omega)} + \ep\|u\|^2_{H^1(\Omega)}  \notag \\ 
& + 2\alpha\omega\|\nabla_\Gamma u_t\|^2_{L^2(\Gamma)} + 2\|u_t\|^2_{L^2(\Gamma)} + \ep\alpha\omega\langle \nabla_\Gamma u_t,\nabla_\Gamma u \rangle_{L^2(\Gamma)} + \ep\langle u_t,u \rangle_{L^2(\Gamma)} + \ep\|u\|^2_{H^1(\Gamma)}  \notag \\ 
& + \ep\langle f(u),u \rangle_{L^2(\Omega)} + \ep\langle g(u),u \rangle_{L^2(\Gamma)} = \ep\|u_t\|^2_{L^2(\Omega)} + \ep\|u_t\|^2_{L^2(\Gamma)}.  \label{stest-3}
\end{align}

Define the following functional for all $\zeta=(u,v,\gamma,\delta)\in\mathcal{H}_0$ and for each $\ep>0$,
\begin{align}
\Psi(\zeta) := & \|\zeta\|^2_{\mathcal{H}_0} + \ep\langle u,v \rangle_{L^2(\Omega)} + \ep\langle \gamma,\delta \rangle _{L^2(\Gamma)} + 2\int_\Omega F(u) dx + 2\int_\Gamma G(\gamma) d\sigma.  \label{stest-4}
\end{align}
(Also, on trajectories $\zeta(t)=(u(t),u_t(t),u_{\mid\Gamma}(t),u_{t\mid\Gamma}(t))$, $t>0$, we denote $\Psi(\zeta)$ by $\Psi(t).$)
Observe, with the Cauchy--Schwarz inequality, the Poincar\'{e} inequality (\ref{Poincare}), the trace embedding $H^1(\Omega)\hookrightarrow L^2(\Gamma)$, and (\ref{cons-4}), (\ref{cons-5}), (\ref{cons-7}) and (\ref{cons-8}), a straight forward calculation shows there are constants $C_1,C_2>0$, such that, for all $t\ge0,$
\begin{align}
C_1\|\zeta(t)\|^2_{\mathcal{H}_0} - 2c_2 - 2c_4 \le \Psi(t),  \label{stest-11}
\end{align}
and
\begin{align}
\Psi(t) \le C_2\left( \|\zeta(t)\|^2_{\mathcal{H}_0} + \|u(t)\|^6_{H^1(\Omega)} + \|u(t)\|_{H^1(\Omega)} + \|u(t)\|^{\rho+1}_{H^1(\Gamma)} + \|u(t)\|_{H^1(\Gamma)} \right).  \label{stest-11b}
\end{align}
Moreover, the map $t\mapsto \Psi(t)$ is $C^1([0,\infty)).$

Since $\omega\in(0,1]$, in (\ref{stest-3}) we estimate,
\begin{align}
\ep\omega\langle\nabla u_t,\nabla u\rangle_{L^2(\Omega)} & \ge -\frac{\omega}{4}\|\nabla u_t\|^2_{L^2(\Omega)} - \ep^2\|\nabla u\|^2_{L^2(\Omega)},  \label{stest-9}
\end{align}
\begin{align}
\ep\langle u_t, u\rangle_{L^2(\Omega)} & \ge -\frac{\omega}{2}\|u_t\|^2_{L^2(\Omega)} - \frac{2\ep^2}{\omega}\|u\|^2_{L^2(\Omega)},  \label{stest-9b}
\end{align}
\begin{align}
\ep\alpha\omega\langle \nabla_\Gamma u_t,\nabla_\Gamma u \rangle_{L^2(\Gamma)} & \ge -\frac{\alpha\omega}{4}\|\nabla_\Gamma u_t\|^2_{L^2(\Gamma)} - \ep^2\|\nabla_\Gamma u\|^2_{L^2(\Gamma)},  \label{stest-10}
\end{align}
\begin{align}
\ep\langle u_t, u\rangle_{L^2(\Gamma)} & \ge -\frac{\omega}{2}\|u_t\|^2_{L^2(\Gamma)} - \frac{2\ep^2}{\omega}\|u\|^2_{L^2(\Gamma)}.  \label{stest-10b}
\end{align}
We see that combining (\ref{stest-3})--(\ref{stest-10b}), with (\ref{cons-3}) and (\ref{cons-6}), there holds for almost all $t\ge0,$
\begin{align}
\frac{d}{dt} & \left\{ \|\zeta\|^2_{\mathcal{H}_0} + \ep\langle u_t,u \rangle_{L^2(\Omega)} + \ep\langle u_t,u \rangle _{L^2(\Gamma)} + 2\int_\Omega F(u) dx + 2\int_\Gamma G(u) d\sigma \right\}  \notag \\ 
& + \frac{7}{8}\omega \|\nabla u_t\|^2_{L^2(\Omega)} + \left( 2-\frac{\omega}{2}-\ep \right) \|u_t\|^2_{L^2(\Omega)} + \ep\left( 1-\ep \right)\|\nabla u\|^2_{L^2(\Omega)} + \ep\left(\mu_1-\frac{2\ep}{\omega}\right) \|u\|^2_{L^2(\Omega)}  \notag \\ 
& + \frac{7}{8}\alpha\omega \|\nabla_\Gamma u_t\|^2_{L^2(\Gamma)} + \left( 2-\frac{\omega}{2}-\ep \right) \|u_t\|^2_{L^2(\Gamma)} + \ep\left( 1-\ep \right)\|\nabla_\Gamma u\|^2_{L^2(\Gamma)} + \ep\left(\mu_2-\frac{2\ep}{\omega}\right) \|u\|^2_{L^2(\Gamma)}  \notag \\ 
& \le \ep(c_1 + c_3).  \label{stest-14}
\end{align}
Again, $\omega\in(0,1]$, so $2-\frac{\omega}{2}-\ep \ge \frac{3}{2}-\ep$.
Set
\[
\ep_1:=\min\left\{ \frac{\mu_1\omega}{2},\frac{\mu_2\omega}{2} \right\}.
\]
Then for all $\ep\in(0,\ep_1),$ the positivity of 
\[
m_1:=\min\left\{ \mu_1-\frac{2\ep}{\omega}, \mu_2-\frac{2\ep}{\omega} \right\}>0
\]
is assured. 
Fix any $\ep^*\in(0,\ep_1).$
After applying both Poincar\'{e} ineqialities \eqref{Poincare} and \eqref{bndry-Poincare}, we are able to choose a constant $\nu_1>0$ so that \eqref{stest-14} becomes,  
\begin{align}
\frac{d}{dt}\Psi + \nu_1 \|\zeta\|^2_{\mathcal{H}_0} \le C.  \label{stest-12}
\end{align}
Observe, $\nu_1\sim m_1 \lesssim 1$ for all $\alpha\in(0,1]$ and $\omega\in(0,1]$.

Let $\widetilde R>0$. 
For all $\zeta_0\in \mathcal{H}_0$ with $\|\zeta_0\|_{\mathcal{H}_0} \le \widetilde R$, the upper-bound in (\ref{stest-11b}) reads
\begin{equation}  \label{stest-zero}
\Psi(\zeta_0) \le C_2 \left( \widetilde R^2 + \widetilde{R}^6 + \widetilde{R}^{\rho+1} + 2\widetilde{R} \right).
\end{equation}
Hence, for all $\widetilde R>0$, there exists $R>0$ such that, for all $\zeta_0\in\mathcal{H}_0$ with $\|\zeta_0\|_{\mathcal{H}_0}\le \widetilde R$, then $\Psi(\zeta_0)\le R.$
From the lower-bound in (\ref{stest-11}), we immediately see that $\sup_{t\ge0}\Psi(\zeta(t)) \ge -2c_2-2c_4.$
By Proposition \ref{absset}, there exists $t_0=t_0(B)\ge 0$, such that for all $t\ge t_0$,
\[
\Psi(\zeta(t)) \le \sup_{\zeta\in \mathcal{H}_0}\{ \Psi(\zeta):\nu_1\|\zeta\|^2_{\mathcal{H}_0} \le 2C \}.
\]
Thus, there is $R_0>0$ such that, for all $t\geq t_0$ and for all $\zeta_0\in \mathcal{H}_0$ with $\|\zeta_0\|_{\mathcal{H}_0} \le \widetilde R$,
\begin{equation}  \label{semiflow-absorbing}
\|S_\alpha(t)\zeta_0\|_{\mathcal{H}_0} \leq R_0.
\end{equation}

By definition, the set $\mathcal{B}_0$ in (\ref{abs-set}) is closed and bounded in $\mathcal{H}_0$. 
The inequality in (\ref{semiflow-absorbing}) implies that $\mathcal{B}_0$ is absorbing: given any nonempty bounded subset $B$ of $\mathcal{H}_0$, there is a $t_0\geq 0$ depending on $B$ in which, for all $t\geq t_0$, $S_\alpha(t)B\subseteq \mathcal{B}_0$. 
Consequently, since $\mathcal{B}_0$ is bounded, $\mathcal{B}_0$ is also positively invariant under the semiflow $S_\alpha$.
This completes the proof of the dissipation of $S_\alpha$ for all $\alpha\in[0,1]$.
\end{proof}

\subsection{Global attractors}  \label{s:global}

In this section we prove

\begin{theorem}  \label{t:global}
The semiflow $S_\alpha$ generated by the mild solutions of Problem {\textbf{P}}$_\alpha$, $\alpha\in[0,1]$, admits a global attractor $\mathcal{A}_{\alpha}$ in $\mathcal{H}_0$. 
The global attractor is invariant under the semiflow $S_\alpha$ (both positively and negatively) and attracts all nonempty bounded subsets of $\mathcal{H}_0$; precisely, 
\begin{enumerate}
\item For each $t\geq 0$, $S_\alpha(t)\mathcal{A}_{\alpha}=\mathcal{A}_{\alpha}$, and 
\item For every nonempty bounded subset $B$ of $\mathcal{H}_0$,
\[
\lim_{t\rightarrow\infty}{\rm{dist}}_{\mathcal{H}_0}(S_\alpha(t)B,\mathcal{A}_{\alpha}):=\lim_{t\rightarrow\infty}\sup_{\zeta\in B}\inf_{\xi\in\mathcal{A}_{\alpha}}\|S_\alpha(t)\zeta-\xi\|_{\mathcal{H}_0}=0.
\]
\end{enumerate}
The global attractor is unique and given by 
\[
\mathcal{A}_{\alpha}:=\omega(\mathcal{B}_0)=\bigcap_{s\geq 0}{\overline{\bigcup_{t\geq s} S_\alpha(t)\mathcal{B}_0}}^{\mathcal{H}_0}.
\]
Furthermore, $\mathcal{A}_{\alpha}$ is the maximal compact invariant subset in $\mathcal{H}_0$.
\end{theorem}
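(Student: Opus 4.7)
My plan is to invoke the standard existence criterion for global attractors (see e.g. \cite[Theorem 3.4.6]{Hale88} or \cite{Babin&Vishik92,Temam88}): a continuous semiflow on the complete metric space $\mathcal{H}_0$ admits a unique global attractor, which is maximal compact invariant and given by the $\omega$-limit set of any bounded absorbing set, precisely when the semiflow is bounded dissipative and asymptotically compact. The first hypothesis has already been established: the set $\mathcal{B}_0$ from Lemma \ref{t:abs-set} is bounded, absorbing, and positively invariant under $S_\alpha$. Consequently, the whole theorem reduces to establishing asymptotic compactness of $S_\alpha$, and once that is in hand, uniqueness, invariance, attraction of bounded sets, the identification $\mathcal{A}_\alpha=\omega(\mathcal{B}_0)$, and maximality all follow from the abstract framework.

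The principal obstacle is that, as observed in the introduction, the linear semigroup $e^{A_\alpha t}$ has no compact resolvent, so one cannot harvest compactness from any smoothing property of $e^{A_\alpha t}$. To get around this I would follow the strategy flagged in the introduction and implement the method of $\alpha$-contractions \`a la Chow--Hale. Concretely, let $\zeta_{01},\zeta_{02}\in\mathcal{B}_0$, write $\bar\zeta(t)=S_\alpha(t)\zeta_{01}-S_\alpha(t)\zeta_{02}$, and derive the system analogous to \eqref{cde-1}. Testing against the multiplier $2\bar u_t+\nu\bar u$ in $L^2(\Omega)$ (augmented with the corresponding boundary multiplier in $L^2(\Gamma)$) and combining it with the dissipation supplied by the strong damping terms $-\omega\Delta\bar u_t$, $-\alpha\omega\Delta_\Gamma\bar u_t$ and the weak damping terms, just as in the proof of Lemma \ref{t:abs-set} and Proposition \ref{t:wk-unique}, I expect to produce a differential inequality of the form
\begin{equation*}
\frac{d}{dt}\Phi(\bar\zeta)+2\mu\,\Phi(\bar\zeta)\le C\bigl(\|f(u^1)-f(u^2)\|_{L^2(\Omega)}^2+\|g(u^1_{\mid\Gamma})-g(u^2_{\mid\Gamma})\|_{L^2(\Gamma)}^2\bigr),
\end{equation*}
with $\Phi(\bar\zeta)$ equivalent to $\|\bar\zeta\|_{\mathcal{H}_0}^2$ and with $\mu>0$ independent of $\alpha\in[0,1]$.

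The next ingredient is the compactness of the forcing on the right-hand side. Under the subcritical growth conditions \eqref{assm-3}--\eqref{assm-4} and the uniform bound \eqref{wkest-1} on trajectories in $\mathcal{B}_0$, H\"older's inequality together with the compact Sobolev embeddings $H^1(\Omega)\hookrightarrow\hookrightarrow L^p(\Omega)$ for $p<6$ and $H^1(\Gamma)\hookrightarrow\hookrightarrow L^q(\Gamma)$ for every $q\in[1,\infty)$ show that the map $\zeta\mapsto(0,f(u),0,g(u_{\mid\Gamma}))$ sends $\mathcal{H}_0$-bounded sets into $\mathcal{H}_0$-precompact sets, so the right-hand side is a compact, lower-order perturbation of $\bar\zeta$. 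Integrating the inequality via Gronwall yields the Chow--Hale decomposition $S_\alpha(t)=V_\alpha(t)+K_\alpha(t)$ with $\|V_\alpha(t)\zeta_{01}-V_\alpha(t)\zeta_{02}\|_{\mathcal{H}_0}\le Ce^{-\mu t}\|\zeta_{01}-\zeta_{02}\|_{\mathcal{H}_0}$ and $K_\alpha(t)$ mapping $\mathcal{B}_0$ into a precompact subset of $\mathcal{H}_0$, which is precisely the $\alpha$-contraction criterion for asymptotic smoothness.

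The most delicate point I anticipate is the construction of the multiplier and the associated modified energy $\Phi$ so that the dissipation absorbs all of $\bar\zeta$ (the coupling between bulk and boundary components requires care, as does the degenerate boundary dissipation when $\alpha=0$, where only $\|\bar u_t\|_{L^2(\Gamma)}$ and not $\|\nabla_\Gamma\bar u_t\|_{L^2(\Gamma)}$ is available). The additional static-damping terms $+u$ and $+\gamma$ in \eqref{pde}--\eqref{bc} should nevertheless guarantee coercivity of the full $\mathcal{H}_0$-norm and a rate $\mu>0$ uniform in $\alpha$. Once asymptotic smoothness is verified, the standard theory furnishes $\mathcal{A}_\alpha=\omega(\mathcal{B}_0)$ with all the properties claimed in the theorem.
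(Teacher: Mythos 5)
Your overall strategy coincides with the paper's: reduce everything to asymptotic compactness via the absorbing set of Lemma \ref{t:abs-set}, and obtain asymptotic compactness by an $\alpha$-contraction argument on the difference of two trajectories, using a perturbed energy functional and the dissipation from the damping terms. Up to the choice of abstract criterion (you propose the splitting $S_\alpha(t)=V_\alpha(t)+K_\alpha(t)$ into a decaying part and a compact part, while the paper verifies the equivalent ``contraction plus precompact pseudometric'' condition \eqref{attractor condition} from \cite[Theorem A.1]{DellOro_Pata_2011}), the skeleton is the same.

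However, there is a genuine gap in your compactness step. You claim that under \eqref{assm-3} the map $u\mapsto f(u)$ sends $H^1(\Omega)$-bounded sets into $L^2(\Omega)$-precompact sets ``by H\"older and the compact embeddings $H^1(\Omega)\hookrightarrow\hookrightarrow L^p(\Omega)$, $p<6$.'' This is false for cubic growth in three dimensions: estimating $\|f(u_1)-f(u_2)\|_{L^2(\Omega)}\le \ell_1\|u_1-u_2\|_{L^p(\Omega)}\bigl\|1+|u_1|^2+|u_2|^2\bigr\|_{L^q(\Omega)}$ with $\tfrac1p+\tfrac1q=\tfrac12$ forces $q\le 3$ (so that $\|u_i\|^2_{L^{2q}}$ is controlled by $\|u_i\|^2_{H^1}$) and hence $p\ge 6$, i.e.\ exactly the critical, non-compact embedding $H^1(\Omega)\hookrightarrow L^6(\Omega)$. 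Indeed, for $f(r)=r^3$ a concentrating sequence $u_n(x)=n^{1/2}\phi(nx)$ is bounded in $H^1(\Omega)$, converges weakly to $0$, yet $\|u_n^3\|_{L^2(\Omega)}=\|u_n\|^3_{L^6(\Omega)}$ does not tend to $0$; so $f:H^1(\Omega)\to L^2(\Omega)$ is not compact and your operator $K_\alpha(t)$ is not shown to have precompact range in $\mathcal{H}_0$. (The boundary nonlinearity $g$ is fine, since $\Gamma$ is two-dimensional.) The paper avoids this by never measuring $\widetilde f=f(u_2)-f(u_1)$ in $L^2(\Omega)$: it estimates the duality pairing directly,
\begin{equation*}
\bigl\langle u_t,\widetilde f\,\bigr\rangle_{L^2(\Omega)}\le \|u_t\|_{L^6(\Omega)}\,\|\widetilde f\|_{L^{6/5}(\Omega)}\le C\|u_t\|_{H^1(\Omega)}\|u\|_{L^2(\Omega)}\bigl(1+\|u_1\|^2_{L^6(\Omega)}+\|u_2\|^2_{L^6(\Omega)}\bigr),
\end{equation*}
so that the \emph{difference of the solutions themselves} lands in $L^2(\Omega)$; the term $\|u_t\|_{H^1(\Omega)}$ is absorbed by the interior strong damping $\omega\|\nabla u_t\|^2_{L^2(\Omega)}$ (and the boundary pairing is routed through the trace $H^1(\Omega)\hookrightarrow H^{1/2}(\Gamma)\hookrightarrow L^4(\Gamma)$ so that no boundary gradient dissipation is needed when $\alpha=0$). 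The resulting right-hand side $C\|u\|^2_{L^2(\Omega)}+C\|u\|^2_{L^2(\Gamma)}$ yields, after integration, the precompact pseudometric $\eta(\zeta_1,\zeta_2)=\sup_{s\in[0,t]}\bigl(\|u_1(s)-u_2(s)\|^2_{L^2(\Omega)}+\|u_1(s)-u_2(s)\|^2_{L^2(\Gamma)}\bigr)^{1/2}$, which is precompact precisely because $H^1$ embeds compactly into $L^2$. To repair your argument you must relocate the compactness from the nonlinearity to the lower-order norms of $u_1-u_2$ in exactly this way (or work in a weaker space such as $L^{6/5}(\Omega)$ and pair against $u_t\in L^6(\Omega)$, which amounts to the same thing).
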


In the previous section it was shown that the semiflow generated by the mild solutions of Problem {\textbf{P}}$_\alpha$, $\alpha\in[0,1],$ admits a bounded absorbing set $\mathcal{B}_0$ in $\mathcal{H}_0$.
According to the standard references on the asymptotic behavior of dissipative dynamical systems (cf. e.g. \cite{Babin&Vishik92,Temam88}), it suffices to show that the semiflows are precompact, or decompose into decaying to zero and precompact parts. 
We already know the following: For each $\alpha\in[0,1],$ and for any $t_0>0$, the set 
\begin{equation}  \label{some}
\bigcup_{t\ge t_0}S_\alpha(t)\mathcal{B}_0 \quad \text{is bounded in $D(A_{\alpha}^{1/\gamma})$,}
\end{equation}
where $\gamma = 1$ if $\alpha > 0$ and $\gamma = 2$ if $\alpha = 0$.
Equation \eqref{some} can be deduced from Lemma \ref{t:abs-set} and Remark \ref{smoothing bound final}.
However, due to the complicated nature of the domain of the operator $A_\alpha,$ we do not know that the solutions to Problem {\textbf{P}}$_\alpha$ regularize into precompact trajectories (see (\ref{regularity})).
The reason for this being the first component of the trajectory does not necessarily regularize into $H^{1+\rho}(\Omega)$, for some $\rho>0$, which, of course, is compact in $H^1(\Omega).$
Keep in mind the operator $A_\alpha$ does not possess compact resolvent, and, additionally, we do not have a suitable $H^2$-elliptic regularity estimate to aid in producing additional smoothness.

To partially remedy this situation, we prove the following:
\begin{lemma}
Let $W_n = (w_{1,n},w_{2,n},w_{3,n},w_{4,n})$ be a bounded sequence in $D(A_\alpha^{1/\gamma})$, where $\gamma = 1$ if $\alpha > 0$ and $\gamma = 2$ if $\alpha = 0$.
Then the sequences $\{w_{2,n}\}$ and $\{w_{4,n}\}$ are precompact in $L^2(\Omega)$ and $L^2(\Gamma)$, respectively.
Moreover, we can identify $w_{4,n}$ with the trace of $w_{2,n}$, i.e.~$w_{4,n} = w_{2,n\mid\Gamma}$.
\end{lemma}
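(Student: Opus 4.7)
Since the exponent $1/\gamma$ differs in the two regimes, I would handle the analytic case ($\alpha\in(0,1]$, so $\gamma=1$) and the Gevrey case ($\alpha=0$, so $\gamma=2$) separately. In both, the strategy is to extract fractional-Sobolev regularity of the second and fourth components of $W_n$ from boundedness in $D(A_\alpha^{1/\gamma})$, and then invoke the Rellich--Kondrachov compact embeddings $H^s(\Omega)\hookrightarrow\hookrightarrow L^2(\Omega)$ and $H^s(\Gamma)\hookrightarrow\hookrightarrow L^2(\Gamma)$ for some $s>0$.

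\emph{Analytic case.} When $\alpha>0$, we have $D(A_\alpha^{1/\gamma})=D(A_\alpha)$, and the description in (\ref{domain}) already contains the structural conditions $w_{2,n}\in H^1(\Omega)$ and $w_{4,n}=w_{2,n\mid\Gamma}\in H^1(\Gamma)$, so the trace identification is automatic. Reading off the operator matrix (\ref{operator}), the first component of $A_\alpha W_n$ is exactly $w_{2,n}$ and the third is $w_{4,n}$; since the first and third slots of $\mathcal{H}_0$ are $H^1(\Omega)$ and $H^1(\Gamma)$ respectively, the graph-norm bound on $W_n$ immediately yields $\sup_n\|w_{2,n}\|_{H^1(\Omega)}<\infty$ and $\sup_n\|w_{4,n}\|_{H^1(\Gamma)}<\infty$, and Rellich--Kondrachov concludes.

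\emph{Gevrey case.} When $\alpha=0$, $D(A_0^{1/2})$ is only an intermediate space, so $w_{2,n}$ and $w_{4,n}$ inherit less than full $H^1$ regularity. I would recover ``half the gain'' via interpolation, invoking the fact that for a dissipative generator on a Hilbert space one has the continuous embedding
\begin{equation*}
D(A_0^{1/2})\hookrightarrow[\mathcal{H}_0,D(A_0)]_{1/2}
\end{equation*}
into the complex-interpolation space. Projecting onto the second and fourth slots gives bounds in $[L^2(\Omega),H^1(\Omega)]_{1/2}=H^{1/2}(\Omega)$ and $[L^2(\Gamma),H^1(\Gamma)]_{1/2}=H^{1/2}(\Gamma)$, whence Rellich--Kondrachov again yields precompactness in $L^2(\Omega)$ and $L^2(\Gamma)$. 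The trace identification would then be obtained by approximating each $W_n\in D(A_0^{1/2})$ by a sequence in $D(A_0)$ (dense there, since $D(A_0)$ is a core for $A_0^{1/2}$), on which $w_4=w_{2\mid\Gamma}$ holds pointwise, and passing to the limit distributionally on $\Gamma$.

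\emph{Main obstacle.} The technical heart is the fractional-power embedding in the Gevrey case. Justifying $D(A_0^{1/2})\hookrightarrow[\mathcal{H}_0,D(A_0)]_{1/2}$ in the absence of analyticity is not completely automatic; it rests on the fact that the contraction generator $A_0$ makes $-A_0$ a nonnegative operator in the Komatsu sense, so that the classical moment inequality and the Heinz--Kato theorem give the interpolation identification. A secondary bookkeeping issue is that $D(A_0)$ carries compatibility constraints linking its components, so one must verify that the interpolation still controls the projections onto the relevant scalar Sobolev factors; this is standard, since each projection is bounded from both endpoints into the corresponding factor.
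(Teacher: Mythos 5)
Your analytic case is fine and matches what the paper dismisses as "much simpler": for $\alpha>0$ the graph norm of $A_\alpha$ controls $\|w_{2,n}\|_{H^1(\Omega)}$ and $\|w_{4,n}\|_{H^1(\Gamma)}$ through the first and third rows of the operator matrix, and the trace identification is built into $D(A_\alpha)$. Your route to \emph{precompactness} in the Gevrey case is also workable, though it diverges from the paper's: you argue abstractly through interpolation of the component projections, whereas the paper writes $W_n=A_0^{-1/2}\Phi_n=\frac{1}{\pi i}\int_0^\infty\lambda^{-1/2}R(\lambda,A_0)\Phi_n\,d\lambda$, solves the resolvent equation explicitly, and extracts the $\lambda$-dependent bounds $\|v_n(\lambda)\|_{H^1(\Omega)}\le C\lambda^{-1/2}\|\Phi_n\|_{\mathcal{H}_0}$ and $\|v_n(\lambda)\|_{L^2(\Omega)}\le\lambda^{-1}\|\Phi_n\|_{\mathcal{H}_0}$ before integrating. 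One caveat on your version: the moment inequality and Heinz--Kato give only $D(A_0^{1/2})\hookrightarrow(\mathcal{H}_0,D(A_0))_{1/2,\infty}$, hence $[\mathcal{H}_0,D(A_0)]_{\theta'}$ only for $\theta'<1/2$ (the identification at $\theta=1/2$ exactly is the delicate borderline case and needs bounded imaginary powers, not just nonnegativity). That weakening is harmless for compactness, since any $H^{\theta'}(\Omega)$, $\theta'>0$, embeds compactly in $L^2(\Omega)$.

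The genuine gap is in the trace identification. Your interpolation argument can never place $w_{2,n}$ above $H^{1/2}(\Omega)$ — the endpoint map $D(A_0)\to H^1(\Omega)$ is the best available, so $\theta=1/2$ yields at most $H^{1/2}(\Omega)$, and with the tools you cite only $H^{1/2-\epsilon}(\Omega)$. But $H^{1/2}(\Omega)$ is precisely the threshold at which the Dirichlet trace fails to be bounded, so "the trace of $w_{2,n}$" is not defined on the space in which you have control, and your approximation argument breaks at the limit passage: if $W^{(k)}\to W_n$ in $D(A_0^{1/2})$, then $w_2^{(k)}\to w_{2,n}$ only in $H^{1/2-\epsilon}(\Omega)$, and nothing forces $w_2^{(k)}{}_{\mid\Gamma}$ to converge to a trace of the limit (or to converge at all as distributions on $\Gamma$). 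The paper's explicit resolvent computation is what rescues this step: because $\|v_n(\lambda)\|_{H^1(\Omega)}$ decays like $\lambda^{-1/2}$ rather than merely being bounded, interpolating the $\lambda$-dependent estimates and then integrating gives $\|w_{2,n}\|_{H^\theta(\Omega)}\le C_\theta\|\Phi_n\|_{\mathcal{H}_0}$ for \emph{every} $\theta\in(0,1)$ — strictly above the trace threshold — and the identity $w_{4,n}=w_{2,n\mid\Gamma}$ then follows by taking traces inside the convergent integral, since each $v_n(\lambda)\in H^1(\Omega)$ has trace $v_{n\mid\Gamma}(\lambda)$. To repair your proof you would need some such quantitative improvement over the abstract interpolation bound; the purely soft argument does not suffice for this part of the lemma.
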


\begin{proof}
We will consider only the case when $\alpha = 0$ and $\gamma = 2$, the other case being much simpler.
Let $\Phi_n = (\phi_{1,n},\phi_{2,n},\phi_{1,n\mid\Gamma},\phi_{4,n})$ be a bounded sequence in $\s{H}_0$ such that $A_0^{1/2}W_n = \Phi_n$.
We write
\begin{equation}
\label{W_n frac power}
W_n = A_0^{-1/2}\Phi_n = \frac{1}{\pi i}\int_0^\infty \lambda^{-1/2}R(\lambda,A_0)\Phi_n d\lambda.
\end{equation}
For $\lambda > 0$, consider $U_n(\lambda) = R(\lambda,A_0)\Phi_n$.
Then by solving the resolvent equation we see that $U_n(\lambda) = (u_n,v_n,u_{n\mid\Gamma},v_{n\mid\Gamma})$ where
$$
u_n = \frac{v_n +\phi_{1,n}}{\lambda} 
$$
and $v_n$ solves, for all $f \in H^1(\Omega) \cap H^1(\Gamma)$,
\begin{multline} \label{resolvent eq}
\left(\lambda + 1 + \frac{1}{\lambda}\right)\left\{ \langle v_n,f \rangle_{L^2(\Omega)} + \langle v_n,f \rangle_{L^2(\Gamma)}\right\}
+ \left(\omega + \frac{1}{\lambda}\right) \langle\nabla v_n,\nabla f \rangle_{L^2(\Omega)}
+ \frac{1}{\lambda}\langle \nabla_\Gamma v_n,\nabla_\Gamma f \rangle_{L^2(\Gamma)}
\\
= -\frac{1}{\lambda} \langle \nabla \phi_{1,n},\nabla f \rangle_{L^2(\Omega)}
-\frac{1}{\lambda} \langle \phi_{1,n},f \rangle_{L^2(\Omega)}
-\frac{1}{\lambda}\langle\nabla_\Gamma \phi_{1,n},\nabla_\Gamma f \rangle_{L^2(\Gamma)}
-\frac{1}{\lambda}\langle \phi_{1,n}, f \rangle_{L^2(\Gamma)}
\\
+ \langle \phi_{2,n},f \rangle_{L^2(\Omega)} + \langle \phi_{4,n},f \rangle_{L^2(\Gamma)}.
\end{multline}
From \eqref{resolvent eq} by taking $f = v_n$ we can deduce that,
\begin{align} \label{v_n energy} 
\|v_n\|_{H^1(\Omega)},\|v_n\|_{H^1(\Gamma)} & \leq \|\Phi_n\|_{\s{H}_0}, \\
\label{v_n H^1} \|v_n\|_{H^1(\Omega)} & \leq \frac{C}{\lambda^{1/2}}\|\Phi_n\|_{\s{H}_0}, \\
\label{v_n L^2} \|v_n\|_{L^2(\Omega)} & \leq \frac{1}{\lambda}\|\Phi_n\|_{\s{H}_0}.
\end{align}
Using \eqref{v_n H^1} and \eqref{v_n L^2}, by interpolation we have, for all $\theta \in (0,1)$,
\begin{equation} \label{v_n H theta}
\|v_n\|_{H^\theta(\Omega)} \leq C\|v_n\|_{H^1(\Omega)}^{1-\theta}\|v_n\|_{L^2(\Omega)}^\theta \leq C\lambda^{-(\theta+1)/2}\|\Phi_n\|_{\s{H}_0},
\end{equation}
and from the trace theorem we have, for all $\theta \in (\frac{1}{2},1)$, 
\begin{equation} \label{v_n H theta trace}
\|v_n\|_{H^{\theta-1/2}(\Gamma)} \leq C\lambda^{-(\theta+1)/2}\|\Phi_n\|_{\s{H}_0}.
\end{equation}
Now writing $v_n = v_n(\lambda)$ to denote the dependence on $\lambda$, we can write using \eqref{W_n frac power} that
\begin{equation} \label{w2 and w4}
w_{2,n} = \frac{1}{\pi i}\int_0^\infty \lambda^{-1/2}v_n(\lambda) d\lambda \quad \text{and} \quad w_{4,n} = \frac{1}{\pi i}\int_0^\infty \lambda^{-1/2}R(\lambda,A_0)v_{n\mid\Gamma}(\lambda) d\lambda,
\end{equation}
so long as these integrals converge in $L^2(\Omega)$ and $L^2(\Gamma)$, respectively.
Indeed, by using \eqref{v_n H theta}, \eqref{v_n H theta trace}, and \eqref{v_n energy} we get, for all $\theta \in (0,1)$,
\begin{equation*}
\|w_{2,n}\|_{H^\theta(\Omega)} \leq \int_0^1 \lambda^{-1/2}\|\Phi_n\|_{\s{H}_0}d\lambda + C\int_1^\infty \lambda^{-\theta/2 - 1}\|\Phi_n\|_{\s{H}_0}d\lambda \leq C_\theta \|\Phi_n\|_{\s{H}_0},
\end{equation*}
and, for all $\theta \in (\frac{1}{2},1)$, 
\begin{equation*}
\|w_{4,n}\|_{H^{\theta-1/2}(\Gamma)} \leq \int_0^1 \lambda^{-1/2}\|\Phi_n\|_{\s{H}_0}d\lambda + C\int_1^\infty \lambda^{-\theta/2 - 1}\|\Phi_n\|_{\s{H}_0}d\lambda \leq C_\theta \|\Phi_n\|_{\s{H}_0}.
\end{equation*}
By taking any $\theta \in (\frac{1}{2},1)$, this proves that the integrals in \eqref{w2 and w4} converge, and moreover it allows us to identify $w_{4,n} = w_{2,n\mid\Gamma}$.
Furthermore, since $H^\theta$ is compactly embedded in $L^2$ for all $\theta > 0$, the compactness claim is proved.
\end{proof}

\begin{proof}[Proof of Theorem \ref{t:global}]
To show the existence of a family of global attractors in $\mathcal{H}_0$, we appeal to \cite[Theorem A.1]{DellOro_Pata_2011} (or also see, for example, \cite[Section 3.2]{Hale88} or \cite[Section 2.7]{Milani&Koksch05}).
Accordingly, we only to show that there exists a time $T > 0$, a constant $0 < \nu < 1$ and a pre-compact pseudo-metric $\eta$ on $\s{B}_0$, where $\s{B}_0$ is the bounded absorbing set from Lemma \ref{t:abs-set}, such that
\begin{equation} \label{attractor condition}
\|S_\alpha(T)\zeta_1 - S_\alpha(T)\zeta_2\|_{\s{H}_0} \leq \nu\|\zeta_1 - \zeta_2\|_{\s{H}_0} + \eta(\zeta_1,\zeta_2) \quad \forall \zeta_1,\zeta_2 \in \s{B}_0.
\end{equation}
Let $\zeta_1 = (u_1,v_1,\gamma_1,\delta_1)$ and $\zeta_2 = (u_2,v_2,\gamma_2,\delta_2)$ in $\s{B}_0$ be given, and let $\zeta = \zeta_1 - \zeta_2 = (u,v,\gamma,\delta)$.
Then $S_\alpha(t)\zeta = (u(t),u_t(t),u(t)|_\Gamma,u_t(t)|_\Gamma)$ is the solution of
\begin{equation} \label{eq:differences}
\left\{
\begin{array}{c}
\begin{array}{rcl}
u_{tt} - \omega\Delta u_t + u_t - \Delta u = \widetilde{f} & \text{in} & \Omega \times (0,T) \\
u_{tt} + \partial_n (u+\omega u_t) + u_t - \Delta_\Gamma ( \alpha \omega u_t + u) + u = \widetilde{g} & \text{on} & \Gamma \times (0,T) \\
\end{array}\\
(u(0),u_t(0),u_{\mid\Gamma}(0),u_{t\mid\Gamma}(0)) = \zeta,
\end{array} \right.
\end{equation}
where $\widetilde{f} = f(u_2(t))-f(u_1(t))$ and $\widetilde{g} = g(u_2(t)) - g(u_1(t))$.
For now we will assume in addition that $\zeta_1$ and $\zeta_2$ are smooth, i.e.~contained in $D(A_\alpha)$.
Then since $S_\alpha(t)\zeta_1$ and $S_\alpha(t)\zeta_2$ are bounded trajectories in $\s{H}_0$, and since $f:H^1(\Omega) \to L^2(\Omega)$ and $g:H^1(\Gamma) \to L^2(\Gamma)$ are locally Lipschitz, we may replace them by globally Lipschitz truncations;
hence, by monotone operator theory, \eqref{eq:differences} generates a nonlinear strongly continuous semigroup (for details, see, for instance, \cite{Graber_Said-Houari_2012}).
Thus for initial data in $D(A_\alpha)$ we have $\zeta(t) = S_\alpha(t)\zeta_1 - S_\alpha(t)\zeta_2$ also in $D(A_\alpha)$ for all $t > 0$, and in particular the solution is classical.

Define the functional on trajectories $\zeta(t)=(u(t),u_{t}(t),u_{\mid\Gamma}(t),u_{t\mid\Gamma}(t))$ for $t\ge 0$ by,
\begin{align*}
I(t) & := \frac{\omega}{2}\|\nabla u(t)\|_{L^2(\Omega)}^2 + \frac{\alpha \omega}{2}\|\nabla_\Gamma u(t)\|_{L^2(\Gamma)}^2 + \|u(t)\|_{L^2(\Omega)}^2 + \|u(t)\|_{L^2(\Gamma)}^2  \notag \\ 
& + \langle u_t(t),u(t)\rangle_{L^2(\Omega)} + \langle u_t(t),u(t) \rangle_{L^2(\Gamma)}.
\end{align*}
By using $u$ as a multiplier in \eqref{eq:differences} and integrating by parts, we get, for almost all $t\ge0,$
\begin{equation*}
\frac{d}{dt}I + \|u\|_{H^1(\Omega)}^2 + \|u\|_{H^1(\Gamma)}^2
 = \left\langle \widetilde{f},u \right\rangle_{L^2(\Omega)} + \left\langle \widetilde{g},u \right\rangle_{L^2(\Gamma)} + \|u_t\|_{L^2(\Omega)}^2 + \|u_t\|_{L^2(\Gamma)}^2.
\end{equation*}
We define also the following energy functional,
\begin{align*}
E(t) & := \frac{1}{2}\|S_\alpha(t)\zeta_1 - S_\alpha(t)\zeta_2\|_{\s{H}_0}^2  \notag \\ 
& = \frac{1}{2}\|u(t)\|_{H^1(\Omega)}^2 + \frac{1}{2}\|u(t)\|_{H^1(\Gamma)}^2
+ \frac{1}{2}\|u_t(t)\|_{L^2(\Omega)}^2 + \frac{1}{2}\|u_t(t)\|_{L^2(\Gamma)}^2.
\end{align*}
We then set $L = E + \epsilon I$, for some $\epsilon > 0$ to be determined.
Recalling the formal energy identity  \eqref{energy-p}, we can write
\begin{align}
\frac{d}{dt}L & + \omega \|\nabla u_t\|_{L^2(\Omega)}^2 + \alpha \omega \|\nabla_\Gamma u_t\|_{L^2(\Gamma)} + \|u_t\|_{L^2(\Omega)}^2 + \|u_t\|_{L^2(\Gamma)}^2 + \epsilon \|u\|_{H^1(\Omega)}^2 + \epsilon \|u\|_{H^1(\Gamma)}^2  \notag \\
& = \epsilon \|u_t\|_{L^2(\Omega)}^2 + \epsilon \|u_t\|_{L^2(\Gamma)}^2 + \left\langle u_t,\widetilde{f} \right\rangle_{L^2(\Omega)} + \left\langle u_t,\widetilde{g} \right\rangle_{L^2(\Gamma)} + \epsilon \left\langle u,\widetilde{f} \right\rangle_{L^2(\Omega)} + \epsilon \left\langle u,\widetilde{g} \right\rangle_{L^2(\Gamma)}.  \notag
\end{align}
For $\epsilon >0$ small enough we can write, for each $t\ge0$,
\begin{equation*}
L \leq \frac{3}{4}\|u\|_{H^1(\Omega)}^2 + \frac{3}{4}\|u\|_{H^1(\Gamma)}^2 + \|u_t\|_{L^2(\Omega)}^2 +\|u_t\|_{L^2(\Gamma)}^2,
\end{equation*}
from which follows,
\begin{align}
\frac{d}{dt}L & + \epsilon L + \omega \|\nabla u_t\|_{L^2(\Omega)}^2 + \alpha \omega \|\nabla_\Gamma u_t\|_{L^2(\Gamma)}  \notag \\
& + (1-2\epsilon)\|u_t\|_{L^2(\Omega)}^2 + (1-2\epsilon)\|u_t\|_{L^2(\Gamma)}^2 + \frac{\epsilon}{4} \|u\|_{H^1(\Omega)}^2 + \frac{\epsilon}{4} \|u\|_{H^1(\Gamma)}^2 \notag \\
& \leq \langle u_t,\widetilde{f}\rangle_{L^2(\Omega)} + \langle u_t,\widetilde{g} \rangle_{L^2(\Gamma)} + \epsilon \langle u,\widetilde{f}\rangle_{L^2(\Omega)} + \epsilon \langle u,\widetilde{g} \rangle_{L^2(\Gamma)}.  \notag
\end{align}
Recalling that $f$ and $g$ are locally Lipschitz and that $u_1,u_2$ are bounded in $H^1(\Omega)$ and $H^1(\Gamma)$, we have,
\begin{equation*}
\left\|\widetilde{f}\right\|_{L^2(\Omega)} = \|f(u_1)-f(u_2)\|_{L^2}(\Omega) \leq C\|u_1-u_2\|_{H^1(\Omega)} = C\|u\|_{H^1(\Omega)},
\end{equation*}
and likewise $\|\widetilde{g}\|_{L^2(\Gamma)} \leq C\|u\|_{H^1(\Gamma)}$.
This is sufficient to obtain, for almost all $t\ge0$,
\begin{align}
& \frac{d}{dt}L + \epsilon L + \omega \|\nabla u_t\|_{L^2(\Omega)}^2 + \alpha \omega \|\nabla_\Gamma u_t\|_{L^2(\Gamma)}  \notag \\
& + (1-2\epsilon)\|u_t\|_{L^2(\Omega)}^2 + (1-2\epsilon)\|u_t\|_{L^2(\Gamma)}^2 + \frac{\epsilon}{8} \|u\|_{H^1(\Omega)}^2 + \frac{\epsilon}{8} \|u\|_{H^1(\Gamma)}^2  \notag \\
& \leq \left\langle u_t,\widetilde{f} \right\rangle_{L^2(\Omega)} + \left\langle u_t,\widetilde{g} \right\rangle_{L^2(\Gamma)} + C\epsilon \|u\|^2_{L^2(\Omega)} + C\epsilon \|u\|^2_{L^2(\Gamma)}.  \notag
\end{align}
For the remaining terms, we have the more precise estimates,
\begin{align*}
\left\langle u_t,\tilde{f} \right\rangle_{L^2(\Omega)} 
&\leq \|u_t\|_{L^6(\Omega)}\|f(u_1)-f(u_2)\|_{L^{6/5}(\Omega)}
\\
&\leq \|u_t\|_{L^6(\Omega)}\|u\|_{L^{2}(\Omega)}\left\|1 + |u_1|^2 + |u_2|^2\right\|_{L^3(\Omega)}
\\
&\leq C\|u_t\|_{H^1(\Omega)}\|u\|_{L^{2}(\Omega)} \left( 1 + \|u_1\|_{L^6(\Omega)}^2 + \|u_2\|^2_{L^6(\Omega)} \right)
\\
&\leq C\|u_t\|_{H^1(\Omega)}\|u\|_{L^{2}(\Omega)},
\end{align*}
and similarly, we find
\begin{align*}
\left\langle u_t,\tilde{g} \right\rangle_{L^2(\Gamma)} & \leq \|u_t\|_{L^4(\Gamma)}\|u\|_{L^2(\Gamma)} \left\|1 + |u_1|^{\rho-1} + |u_2|^{\rho-1} \right\|_{L^4(\Gamma)}  \notag \\
&\leq C\|u_t\|_{H^{1/2}(\Gamma)}\|u\|_{L^2(\Gamma)} \left( 1 + \|u_1\|^{\rho-1}_{L^{4(\rho-1)}(\Gamma)} + \|u_2\|^{\rho-1}_{L^{4(\rho-1)}(\Gamma)} \right)  \notag \\ 
& \leq C\|u_t\|_{H^{1}(\Omega)}\|u\|_{L^2(\Gamma)},
\end{align*}
where $H^{1/2}(\Gamma) \hookrightarrow L^4(\Gamma)$ by the Sobolev embedding theorem and $H^{1}(\Omega) \hookrightarrow H^{1/2}(\Gamma)$ by the trace theorem.
Thus, for $\epsilon > 0$ small enough we can conclude,
\begin{equation*}
\frac{d}{dt}L + \epsilon L \leq C\|u\|^2_{L^2(\Omega)} + C \|u\|^2_{L^2(\Gamma)},
\end{equation*}
where upon integration we deduce, for all $t\ge0,$ 
\begin{equation*}
L(t) \leq e^{-\epsilon t}L(0) + C\int_0^t \left( \|u(s)\|^2_{L^2(\Omega)} + \|u(s)\|^2_{L^2(\Gamma)} \right) ds.
\end{equation*}
On the other hand, we have from the very definition of $L$,
\begin{equation*}
\frac{1}{4}\|S_\alpha(t)\zeta_1 - S_\alpha(t)\zeta_2\|_{\s{H}_0}^2 \leq L(t)
\leq (1+\omega)\|S_\alpha(t)\zeta_1 - S_\alpha(t)\zeta_2\|_{\s{H}_0}^2,
\end{equation*}
so it follows that, for all $t\ge0,$
\begin{equation*}
\|S_\alpha(t)\zeta_1 - S_\alpha(t)\zeta_2\|_{\s{H}_0}^2 \leq Ce^{-\epsilon t}\|\zeta\|_{\s{H}_0}^2 + Ct \sup_{s \in [0,t]} \left( \|u_1(s)-u_2(s)\|_{L^2(\Omega)}^2 + \|u_1(s)-u_2(s)\|_{L^2(\Gamma)}^2 \right).
\end{equation*}
Now by continuity of the semigroup, this inequality also holds for any $\zeta_1,\zeta_2 \in \s{B}_0$, since the domain of the generator is dense in $\s{H}_0$.
Moreover, the pseudo-metric
$$
\eta(\zeta_1,\zeta_2) := \sup_{s \in [0,t]} (\|u_1(s)-u_2(s)\|_{L^2(\Omega)}^2 + \|u_1(s)-u_2(s)\|_{L^2(\Gamma)}^2)^{1/2}
$$
is precompact in $\s{B}_0$ since $H^1$ is compact in $L^2$.
Therefore \eqref{attractor condition} is satisfied.
The proof of Theorem \ref{t:global} is now complete.
\end{proof}

The dimension of these global attractors is explored further once we obtain the results in a later section.
The next section contains a short continuity result for the family of global attractors.

\subsection{Continuity properties of the global attractors}

In this section we aim to prove, in a sense to be made more precise shortly, that
\[
\mathcal{A}_\alpha \rightarrow \mathcal{A}_0 \quad \text{in}\quad \mathcal{H}_0 \quad \text{as}\ \quad \alpha \rightarrow 0.
\]
Following, for example \cite[Section 10.8]{Robinson01}, the perturbation induced by the parameter $\alpha>0$ to Problem {\textbf{P}}$_0$, is termed {\em{regular}} because both classes of Problem {\textbf{P}} lie in the same ``standard energy'' phase space; in particular, the family of global attractors, $\{\mathcal{A}_\alpha\}_{\alpha\in[0,1]}$, lies in $\mathcal{H}_0$.
We will utilize \cite[Theorem 10.16]{Robinson01}.

\begin{proposition}  \label{t:usc}
Assume that for $\ep\in[0,\ep_0)$ the semigroups $S_\ep$ each have admit a global attractor $\mathcal{A}_\ep$ and that there exists a bounded set $X$ such that 
\[
\bigcup_{\ep\in[0,\ep_0)} \mathcal{A}_\ep \subset X.
\]
If in addition the semigroup $S_\ep$ converges to $S_0$ in the sense that, for each $t>0$, $S_\ep(t)x\rightarrow S_0(t)x$ uniformly on bounded subsets $Y$ of the phase space $H$,
\[
\sup_{x\in Y}\|S_\ep(t)x-S_0(t)x\|_H\rightarrow 0 \quad \text{as} \quad \ep\rightarrow 0,
\]
then 
\[
{\rm{dist}}(\mathcal{A}_\ep,\mathcal{A}_0)\rightarrow 0 \quad \text{as} \quad \ep\rightarrow 0.
\]
\end{proposition}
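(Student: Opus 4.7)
My plan is to follow the standard upper-semicontinuity argument (which is exactly the content of \cite[Theorem 10.16]{Robinson01}): proceed by contradiction, use the negative invariance of $\mathcal{A}_\epsilon$ to ``pull back'' a hypothetical offending point to one that can be compared via the uniform convergence of the semigroups, and then absorb it using the attracting property of $\mathcal{A}_0$. Concretely, suppose $\mathrm{dist}(\mathcal{A}_\epsilon,\mathcal{A}_0)\not\to 0$ as $\epsilon\to 0$. Then there exist $\delta>0$, a sequence $\epsilon_n\to 0$, and points $a_n\in\mathcal{A}_{\epsilon_n}$ with
\[
\mathrm{dist}_H(a_n,\mathcal{A}_0) \geq \delta.
\]

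For each fixed $t>0$, invoke the full invariance $S_{\epsilon_n}(t)\mathcal{A}_{\epsilon_n}=\mathcal{A}_{\epsilon_n}$ to choose $b_n\in\mathcal{A}_{\epsilon_n}\subset X$ with $S_{\epsilon_n}(t)b_n=a_n$. Because $\mathcal{A}_0$ attracts the bounded set $X$, given $\delta>0$ we may fix $t=T$ large enough so that
\[
\mathrm{dist}_H\!\bigl(S_0(T)X,\mathcal{A}_0\bigr) < \tfrac{\delta}{2}.
\]
For this fixed $T$, the uniform convergence hypothesis, applied with $Y=X$, yields an $N$ such that for all $n\geq N$,
\[
\sup_{x\in X}\|S_{\epsilon_n}(T)x - S_0(T)x\|_H < \tfrac{\delta}{2}.
\]
In particular this holds at $x=b_n\in X$, so
\[
\mathrm{dist}_H(a_n,\mathcal{A}_0) \leq \|S_{\epsilon_n}(T)b_n - S_0(T)b_n\|_H + \mathrm{dist}_H(S_0(T)b_n,\mathcal{A}_0) < \tfrac{\delta}{2}+\tfrac{\delta}{2} = \delta,
\]
contradicting $\mathrm{dist}_H(a_n,\mathcal{A}_0)\geq \delta$. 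This proves the claim.

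There is no substantive obstacle here: the argument is entirely structural and uses only the three inputs (common bounded enclosure $X$, negative invariance of the attractors, and uniform convergence on bounded sets). The only thing to double-check is that we are using $\mathrm{dist}$ in the one-sided (non-symmetric) Hausdorff sense, which is how the statement is phrased and what the argument actually proves; the other direction (lower-semicontinuity) would require much stronger hypotheses and is not claimed. Since the hypotheses of Proposition \ref{t:usc} are explicitly those of \cite[Theorem 10.16]{Robinson01}, one could alternatively simply cite that result, but the short contradiction argument above is self-contained.
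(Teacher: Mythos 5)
Your proof is correct and is exactly the standard pull-back-by-invariance argument behind \cite[Theorem 10.16]{Robinson01}, which is all the paper itself offers: Proposition \ref{t:usc} is stated there as a quotation of Robinson's result with no independent proof given. The quantifier order (fix $\delta$, then $T$ from the attraction of $X$ by $\mathcal{A}_0$, then $N$ from uniform convergence at time $T$) and the one-sided Hausdorff semidistance are handled correctly, so nothing further is needed.
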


We will show, with a fair amount of ease, that the family of global attractors for Problem {\textbf{P}}$_\alpha$, for all $\alpha\in[0,1]$, are upper-semicontinuous in the topology of $\mathcal{H}_0$.
We now arrive at our first result. 

\begin{lemma}  \label{to-upper-sc}
Let $B$ be a bounded set in $\mathcal{H}_0$ and $T>0$. 
There exists a constant $M=M(\|B\|_{\mathcal{H}_0},T)>0$ such that for all $\zeta_0\in B$ and for all $t\in[0,T]$, there holds, for all $\alpha\in(0,1],$
\begin{align}
\|S_\alpha(t)\zeta_0-S_0(t)\zeta_0\|_{\mathcal{H}_0} \le \sqrt{\alpha}\cdot M.\label{pert-1}
\end{align}
\end{lemma}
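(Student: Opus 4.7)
The plan is to derive a differential inequality of the form
\[
\|\bar\zeta(t)\|_{\mathcal{H}_0}^2 \le \alpha\, C(R,T) + Q(R)\int_0^t \|\bar\zeta(s)\|_{\mathcal{H}_0}^2\, ds,
\]
where $\bar\zeta(t) := S_\alpha(t)\zeta_0 - S_0(t)\zeta_0$ and $R := Q(\|B\|_{\mathcal{H}_0})$, so that Gronwall's inequality yields the claimed bound $\|\bar\zeta(t)\|_{\mathcal{H}_0}\le \sqrt\alpha\, M$. Writing $\bar u := u^\alpha - u^0$ and subtracting the equations of Problems $\mathbf{P}_\alpha$ and $\mathbf{P}_0$, one finds that $\bar u$ solves a linear problem of the same form as \eqref{cde-1} with zero initial data, except that its boundary equation carries an \emph{extra} source term $\alpha\omega\Delta_\Gamma u^\alpha_t$ on the right-hand side, inherited from the surface damping present only in $\mathbf{P}_\alpha$. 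Theorem~\ref{t:strong-solns} furnishes the uniform-in-$\alpha$ bound $\|\zeta^\alpha(t)\|_{\mathcal{H}_0},\|\zeta^0(t)\|_{\mathcal{H}_0}\le R$ on $[0,T]$.

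Testing the bulk and boundary equations for $\bar u$ with $\bar u_t$ in $L^2(\Omega)$ and $L^2(\Gamma)$, respectively, produces the energy identity
\[
\tfrac{d}{dt}\|\bar\zeta\|_{\mathcal{H}_0}^2 + 2\omega\|\nabla\bar u_t\|_{L^2(\Omega)}^2 + 2\|\bar u_t\|_{L^2(\Omega)}^2 + 2\|\bar u_t\|_{L^2(\Gamma)}^2 = -2\langle\widetilde f,\bar u_t\rangle_{L^2(\Omega)} - 2\langle\widetilde g,\bar u_t\rangle_{L^2(\Gamma)} - 2\alpha\omega\langle\nabla_\Gamma u^\alpha_t,\nabla_\Gamma\bar u_t\rangle_{L^2(\Gamma)},
\]
with $\widetilde f := f(u^\alpha)-f(u^0)$ and $\widetilde g := g(u^\alpha)-g(u^0)$. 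The nonlinear contributions are controlled exactly as in Proposition~\ref{t:wk-unique}, producing a $Q(R)\|\bar\zeta\|_{\mathcal{H}_0}^2$ right-hand side after absorbing $\|\bar u_t\|_{H^1(\Omega)}^2$ and a small multiple of $\|\bar u_t\|_{L^2(\Gamma)}^2$ into the dissipation. All the difficulty lies in the last term: $\nabla_\Gamma \bar u_t$ admits no uniform-in-$\alpha$ control in $\mathcal{H}_0$, so a naive Cauchy--Schwarz on $2\alpha\omega\|\nabla_\Gamma u^\alpha_t\|\|\nabla_\Gamma\bar u_t\|$ cannot be closed. The only uniform handle is provided by the $\mathbf{P}_\alpha$-energy identity \eqref{energy-p}, which upon integration in time gives $2\alpha\omega\int_0^T\|\nabla_\Gamma u^\alpha_s\|^2_{L^2(\Gamma)}\,ds\le Q(\|\zeta_0\|_{\mathcal{H}_0})$; equivalently, the forcing $\alpha\omega\Delta_\Gamma u^\alpha_t$ sits in $L^2(0,T;H^{-1}(\Gamma))$ with norm of size $O(\sqrt\alpha)$.

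The key step is to convert this $L^2(0,T;H^{-1}(\Gamma))$ estimate on the forcing into a pointwise $O(\sqrt\alpha)$ estimate for $\bar\zeta(t)$ in $\mathcal{H}_0$. To do so, I would integrate the energy identity in time and then integrate the problematic term by parts in $s$; using $\bar u(0)=0$, one obtains
\[
-2\alpha\omega\int_0^t \langle \nabla_\Gamma u^\alpha_s,\nabla_\Gamma \bar u_s\rangle_{L^2(\Gamma)}\, ds = -2\alpha\omega\langle\nabla_\Gamma u^\alpha_t(t),\nabla_\Gamma \bar u(t)\rangle_{L^2(\Gamma)} + 2\alpha\omega\int_0^t \langle\nabla_\Gamma u^\alpha_{ss},\nabla_\Gamma \bar u\rangle_{L^2(\Gamma)}\, ds.
\]
Every remaining $u^\alpha$-factor is now paired with $\nabla_\Gamma \bar u$, which is pointwise bounded by $\|\bar\zeta\|_{\mathcal{H}_0}$. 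The boundary term at time $t$ is estimated by $\tfrac14\|\bar\zeta(t)\|_{\mathcal{H}_0}^2 + C\alpha^2\omega^2\|\nabla_\Gamma u^\alpha_t(t)\|^2$, whose time-average is $O(\alpha)$ by the dissipation integral. In the remaining integral I substitute for $u^\alpha_{ss}|_\Gamma$ via the $\mathbf{P}_\alpha$ boundary equation: all substituted summands, once paired tangentially against $\bar u$, are uniformly bounded on $[0,T]$ by $R$, except for one residual factor $\alpha\omega\Delta_\Gamma u^\alpha_s$, which---still paired with $\bar u$ rather than with $\bar u_s$---is controlled by Cauchy--Schwarz and the $L^2_t$ bound above, contributing the desired $\alpha\,C(R,T)$ together with an $\int_0^t \|\bar\zeta(s)\|_{\mathcal{H}_0}^2\,ds$ term. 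Gronwall then delivers \eqref{pert-1}. The main obstacle is precisely this book-keeping: one must arrange that, after all manipulations, every factor $\alpha\omega\nabla_\Gamma u^\alpha_s$ is ultimately paired against $\nabla_\Gamma \bar u$ rather than $\nabla_\Gamma \bar u_s$, so that the $L^2_t$-bound $\|\sqrt\alpha\,\nabla_\Gamma u^\alpha_t\|_{L^2(0,T;L^2(\Gamma))} = O(1)$ supplies a factor of $\alpha$ rather than a factor of order one.
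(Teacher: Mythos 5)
Your setup coincides with the paper's: form the difference $\bar\zeta(t)=S_\alpha(t)\zeta_0-S_0(t)\zeta_0$, observe that $\bar u$ solves the linear difference system \eqref{diff-pert} with the extra boundary source $\alpha\omega\Delta_\Gamma u^\alpha_t$ and zero initial data, test with $2\bar u_t$, and close with Gronwall. At that point the paper disposes of the single new product in one line (see \eqref{diff-pert-3}): by Young's inequality it is bounded by $\alpha\omega\|\nabla_\Gamma u^\alpha_t\|^2_{L^2(\Gamma)}+\alpha\omega\|\nabla_\Gamma\bar u_t\|^2_{L^2(\Gamma)}\le \alpha\, Q(\|B\|_{\mathcal{H}_0})+\alpha\omega\|\bar u_t\|^2_{H^1(\Gamma)}$, after which the argument of Proposition \ref{t:wk-unique} is repeated. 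You reject this direct estimate as unclosable and replace it with a temporal integration by parts followed by a substitution through the boundary equation; it is precisely in that replacement that your proof develops genuine gaps.

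Two concrete failures. First, the boundary term $-2\alpha\omega\langle\nabla_\Gamma u^\alpha_t(t),\nabla_\Gamma\bar u(t)\rangle_{L^2(\Gamma)}$ produced by your integration by parts must be controlled \emph{pointwise} in $t$ in order to feed a Gronwall inequality for $\|\bar\zeta(t)\|^2_{\mathcal{H}_0}$, but you only assert that "its time-average is $O(\alpha)$": the dissipation coming from \eqref{energy-p} controls $\alpha\omega\int_0^T\|\nabla_\Gamma u^\alpha_s\|^2_{L^2(\Gamma)}\,ds$, not $\sup_{t\le T}\alpha^2\omega^2\|\nabla_\Gamma u^\alpha_t(t)\|^2_{L^2(\Gamma)}$, and a time-averaged bound cannot be inserted into a pointwise estimate. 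Second, and more seriously, substituting the \textbf{P}$_\alpha$ boundary equation into $2\alpha\omega\int_0^t\langle\nabla_\Gamma u^\alpha_{ss},\nabla_\Gamma\bar u\rangle\,ds$ generates, among others, the summands $\langle\nabla_\Gamma\Delta_\Gamma u^\alpha,\nabla_\Gamma\bar u\rangle_{L^2(\Gamma)}$ and $\langle\nabla_\Gamma\partial_{\mathbf{n}}(u^\alpha+\omega u^\alpha_s),\nabla_\Gamma\bar u\rangle_{L^2(\Gamma)}$. Neither is "uniformly bounded on $[0,T]$ by $R$": the first requires three tangential derivatives of $u^\alpha$ (or two of $\bar u$) while $R=Q(\|B\|_{\mathcal{H}_0})$ controls only one, and the second involves $\partial_{\mathbf{n}}(u^\alpha+\omega u^\alpha_s)$, which by the definition of $D(A_\alpha)$ in \eqref{domain} is not separately an $L^2(\Gamma)$ function --- only the combination $\partial_{\mathbf{n}}(u+\omega v)_{\mid\Gamma}-\Delta_\Gamma(u_{\mid\Gamma}+\alpha\omega v_{\mid\Gamma})$ is. The integration by parts in $s$ itself also presupposes $\nabla_\Gamma u^\alpha_{ss}\in L^1(0,T;L^2(\Gamma))$, a regularity nowhere established and whose natural bounds would degenerate as $s\to0$ and, potentially, as $\alpha\to0$. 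In short, your reorganization trades one delicate quantity for several that are strictly worse; if you wish to tighten the direct argument, the quantity to examine is the pointwise bound on $\omega\|\nabla_\Gamma u^\alpha_t(t)\|^2_{L^2(\Gamma)}$ used in \eqref{diff-pert-3}, not a substitution through the boundary equation.
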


\begin{proof}
Let $B$ be a bounded set on $\mathcal{H}_0$, $T>0$, and $\alpha\in(0,1]$.
Let $\zeta_{0} = (u_{0},u_{1},\gamma_{0},\gamma_{1})\in B$. 
For $t>0$, let 
\[
\zeta^1(t)=(u^1(t),u_t^1(t),u_{\mid\Gamma}^1(t),u_{t\mid\Gamma}^1(t)) \quad \text{and} \quad \zeta^2(t)=(u^2(t),u_t^2(t),u_{\mid\Gamma}^2(t),u_{t\mid\Gamma}^2(t)),
\]
denote the corresponding global solutions of Problem {\textbf{P}}$_\alpha$ and Problem {\textbf{P}}$_0$, respectively, on $[0,T]$, both with the (same) initial data $\zeta_{0}$.
For all $t\in(0,T]$, set
\begin{align}
\bar\zeta(t) & := \zeta^1(t) - \zeta^2(t)  \notag \\
& = \left( u^1(t),u_t^1(t),u_{\mid\Gamma}^1(t),u_{t\mid\Gamma}^1(t) \right) - \left( u^2(t),u^2_t(t),u^2_{\mid\Gamma}(t),u^2_{t\mid\Gamma}(t) \right)  \notag \\
& =: \left( \bar u(t),\bar u_t(t),\bar u_{\mid\Gamma}(t),\bar u_{t\mid\Gamma}(t) \right).  \notag
\end{align}
Then $\bar\zeta$ and $\bar u$ satisfy the equations
\begin{equation}  \label{diff-pert}
\left\{ \begin{array}{ll}
\bar{u}_{tt}-\omega\Delta\bar{u}_t+\bar{u}_{t}-\Delta\bar{u}+\bar{u}+f(u^1)-f(u^2)=0 & \text{in}\quad (0,\infty )\times\Omega \\ 
\bar{u}_{tt}+\partial_{\mathbf{n}}(\bar{u}+\omega\bar{u}_t)+\bar{u}_t-\Delta_\Gamma\bar{u}+\bar{u}+g(u^1)-g(u^2)=\alpha\omega\Delta_\Gamma u^1_t & \text{on} \quad (0,\infty)\times \Gamma \\ 
\bar{\zeta}(0)={\mathbf{0}} & \text{at} \quad \{0\}\times \Omega.
\end{array}
\right.
\end{equation}
To obtain \eqref{pert-1}, we now only need to follow the proof of Proposition \ref{t:wk-unique}: after multiplying the equation \eqref{diff-pert}$_1$ by $2\bar u_t$ in $L^2(\Omega)$, we estimate the new product to arrive at, 
\begin{align}
-2\alpha\omega\langle -\Delta_\Gamma u^1_t,\bar u_t \rangle_{L^2(\Gamma)} & \le \alpha\omega\|\nabla_\Gamma u^1_t\|_{L^2(\Gamma)} + \alpha\omega\|\nabla_\Gamma \bar u_t\|_{L^2(\Gamma)}  \notag \\ 
& \le \alpha\cdot Q(\|B\|_{\mathcal{H}_0}) + \alpha\omega\| \bar u_t\|_{H^1(\Gamma)}.  \label{diff-pert-3}
\end{align}
The result readily follows.
This finishes the proof.
\end{proof}

\begin{remark}
The above result \eqref{pert-1} establishes that, on compact time intervals, the difference between trajectories of Problem {\textbf{P}}$_\alpha$, $\alpha\in(0,1]$ and Problem {\textbf{P}}$_0$, originating from the same initial data in $B\subset\mathcal{H}_0$, can be controlled, explicitly, in terms of the perturbation parameter $\alpha.$
\end{remark}

The well-known upper-semicontinuity result in Proposition \ref{t:usc} now follows for our family of global attractors.

\begin{theorem}  \label{t:robustness}
The family of global attractors $\{\mathcal{A}_\alpha\}_{\alpha\in[0,1]}$ is upper-semicontinuous in the topology of $\mathcal{H}_0$; precisely, there is a constant $C>0$ independent of $\alpha$ in which there holds,
\[
{\rm dist}_{\mathcal{H}_0}(\mathcal{A}_\alpha,\mathcal{A}_0) \le \sqrt{\alpha}\cdot C.
\]
\end{theorem}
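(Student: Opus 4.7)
The plan is to combine the trajectory estimate of Lemma \ref{to-upper-sc}, which provides the quantitative $\sqrt{\alpha}$ rate on compact time intervals, with the strict backward invariance of the global attractor, in the style of the abstract upper-semicontinuity Proposition \ref{t:usc}. The two hypotheses of that proposition are already in place: by Lemma \ref{t:abs-set} all of the $\mathcal{A}_\alpha$ sit inside the $\alpha$-independent bounded absorbing set $\mathcal{B}_0$, and Lemma \ref{to-upper-sc} delivers exactly the uniform convergence $S_\alpha(t)\zeta_0 \to S_0(t)\zeta_0$ on bounded subsets of $\mathcal{H}_0$ for each $t\in[0,T]$ that the proposition requires.

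Concretely, I would fix $T>0$ and, for an arbitrary $\zeta_\alpha\in\mathcal{A}_\alpha$, invoke the invariance $S_\alpha(T)\mathcal{A}_\alpha=\mathcal{A}_\alpha$ from Theorem \ref{t:global}(1) to write $\zeta_\alpha = S_\alpha(T)\eta_\alpha$ for some $\eta_\alpha\in \mathcal{A}_\alpha\subset \mathcal{B}_0$. The triangle inequality then splits the distance from $\mathcal{A}_0$ as
\begin{align*}
\mathrm{dist}_{\mathcal{H}_0}(\zeta_\alpha,\mathcal{A}_0)
&\le \|S_\alpha(T)\eta_\alpha - S_0(T)\eta_\alpha\|_{\mathcal{H}_0} + \mathrm{dist}_{\mathcal{H}_0}(S_0(T)\eta_\alpha,\mathcal{A}_0) \\
&\le \sqrt{\alpha}\,M(T) + \delta(T),
\end{align*}
where the first term is controlled by Lemma \ref{to-upper-sc}, with $M(T)$ depending only on $\|\mathcal{B}_0\|_{\mathcal{H}_0}$ and $T$, and $\delta(T):=\sup_{\eta\in\mathcal{B}_0}\mathrm{dist}_{\mathcal{H}_0}(S_0(T)\eta,\mathcal{A}_0)\to 0$ as $T\to\infty$ by the attraction property from Theorem \ref{t:global}(2). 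Taking the supremum over $\zeta_\alpha\in \mathcal{A}_\alpha$ produces $\mathrm{dist}_{\mathcal{H}_0}(\mathcal{A}_\alpha,\mathcal{A}_0)\le \sqrt{\alpha}\,M(T)+\delta(T)$, and qualitative upper-semicontinuity is then immediate.

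The principal obstacle is squeezing the two-term bound $\sqrt{\alpha}\,M(T)+\delta(T)$ into the clean form $\sqrt{\alpha}\cdot C$ with $C$ independent of $\alpha$. Since both $M(T)$ and $\delta(T)$ are themselves $\alpha$-free, one clean route is to fix a single $T=T_0$ large enough that $\delta(T_0)\le \sqrt{\alpha}\,M(T_0)$ in the regime of small $\alpha$ of interest, giving $C=2M(T_0)$; more generally, balancing $T=T(\alpha)\to\infty$ against any quantitative rate of attraction of $\mathcal{A}_0$ yields the same conclusion, with the Gronwall-type dependence of $M(T)$ being the quantitative point to track. Because neither $\mathcal{B}_0$ nor $M(T)$ depends on $\alpha$, the resulting constant $C$ is $\alpha$-independent, and Theorem \ref{t:robustness} follows from this backward-invariance argument applied to the trajectory estimate \eqref{pert-1}.
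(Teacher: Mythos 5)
Your qualitative argument is essentially the paper's: the paper simply invokes Proposition \ref{t:usc} with its two hypotheses supplied by Lemma \ref{t:abs-set} (all the $\mathcal{A}_\alpha$ lie in the $\alpha$-independent absorbing set $\mathcal{B}_0$) and Lemma \ref{to-upper-sc} (uniform convergence of $S_\alpha(t)$ to $S_0(t)$ on bounded sets), and your backward-invariance/triangle-inequality computation is just the standard proof of that proposition unrolled. Everything up to the bound $\mathrm{dist}_{\mathcal{H}_0}(\mathcal{A}_\alpha,\mathcal{A}_0)\le \sqrt{\alpha}\,M(T)+\delta(T)$ is correct and matches the paper's route.

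The gap is in the final step, where you convert this into $\sqrt{\alpha}\cdot C$. Your first suggestion --- fix a single $T_0$ with $\delta(T_0)\le\sqrt{\alpha}\,M(T_0)$ ``in the regime of small $\alpha$'' --- is backwards: for fixed $T_0$ the quantity $\delta(T_0)$ is a fixed \emph{positive} number (the attractor is not reached in finite time), while $\sqrt{\alpha}\,M(T_0)\to 0$ as $\alpha\to 0$, so the required inequality fails precisely in the regime of interest. Your second suggestion, balancing $T=T(\alpha)\to\infty$, also does not recover $\sqrt{\alpha}$: the constant $M(T)$ in Lemma \ref{to-upper-sc} comes from the Gronwall estimate behind \eqref{contdep} and grows like $e^{Q(R)T}$, so even granting an exponential attraction rate $\delta(T)\le Q e^{-\nu T}$ (which the paper establishes only in the weaker $\mathcal{H}_{-1}$ topology, not in $\mathcal{H}_0$), the optimal choice of $T(\alpha)$ yields $\mathrm{dist}_{\mathcal{H}_0}(\mathcal{A}_\alpha,\mathcal{A}_0)\lesssim \alpha^{\nu/(2(K+\nu))}$, a strictly smaller power than $1/2$. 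So your argument proves upper semicontinuity but not the stated rate. In fairness, the paper does not prove the rate either: Theorem \ref{t:robustness} is asserted immediately after Lemma \ref{to-upper-sc} and Proposition \ref{t:usc}, neither of which yields $\sqrt{\alpha}$ for the attractors themselves (the lemma gives that rate only for trajectories on compact time intervals), so the explicit constant in the statement deserves exactly the caution you flagged.
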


\subsection{Weak exponential attractors}  \label{s:exp-attr}

This section is motivated by \cite[\S4]{Pata&Zelik06-2}.
In this section we show the existence of a so-called weak exponential attractor. 
We seek a weak exponential attractor for two central reasons.
First, we recall the complicated nature of the domain of the operator associated with the abstract Cauchy problem means the solution operators lack compactness in the standard energy phase space. 
The second reason involves the presence of the dynamic flux term $\partial_{\mathbf{n}}u_t$ in our boundary condition.
Recently, works such as \cite{CGGM10,Frigeri&ShombergXX,Gal07,Gal&Grasselli08,Gal&Shomberg15,GrMS10,Miranville&Zelik05} are able to establish the existence of an exponential attractor through for wave equations with dynamic boundary conditions through the use of suitable $H^2$-elliptic regularity estimates. 
Because of the dynamic flux term, $\partial_{\mathbf{n}}u_t$, such estimates are not available here.

Here we define the space,
\[
\mathcal{H}_{-1} := L^2(\Omega)\times H^{-1}(\Omega)\times L^2(\Gamma)\times H^{-1}(\Gamma),
\]
endowed with the canonical norm, and there holds
\[
\mathcal{H}_0 \hookrightarrow \mathcal{H}_{-1},
\]
with continuous injections.
\begin{theorem}  \label{t:wk-exp-attr}
For each $\alpha\in[0,1]$, the semiflow $S_\alpha$ generated by the weak solutions of Problem \textbf{P}$_\alpha$ admits a weak exponential attractor $\mathcal{M}_{-1,\alpha}$ that satisfies:
\begin{enumerate}
\item $\mathcal{M}_{-1,\alpha}$ is bounded in $\mathcal{H}_0$ and compact in $\mathcal{H}_{-1}$,
\item $\mathcal{M}_{-1,\alpha}$ is positively invariant; i.e., for all $t\ge0,$ $S_\alpha(t)\mathcal{M}_{-1,\alpha}\subseteq\mathcal{M}_{-1,\alpha},$
\item $\mathcal{M}_{-1,\alpha}$ attracts bounded subsets of $\mathcal{H}_0$ exponentially with the metric of $\mathcal{H}_{-1}$; i.e., there exists $\nu>0$ and $Q$ such that, for every bounded subset $B\subset\mathcal{H}_0$ and for all $t\ge0,$
\[
{\rm{dist}}_{\mathcal{H}_{-1}}(S_\alpha(t)B,\mathcal{M}_{-1,\alpha})\le Q(\|B\|_{\mathcal{H}_0})e^{-\nu t}.
\]
\item $\mathcal{M}_{-1,\alpha}$ possesses finite fractal dimension in $\mathcal{H}_{-1}$; i.e.,
\begin{equation}  \label{fdimension}
{\rm{dim}}_F(\mathcal{M}_{-1,\alpha},\mathcal{H}_{-1}):=\limsup_{r\rightarrow 0}\frac{\ln \mu_{\mathcal{H}_{-1}}(\mathcal{M}_{-1,\alpha},r)}{-\ln r}<\infty,
\end{equation}
where $\mu _{\mathcal{H}_{-1}}(\mathcal{M}_{-1,\alpha},r)$ denotes the minimum number of balls of radius $r$ from $\mathcal{H}_{-1}$ required to cover $\mathcal{M}_{-1,\alpha}$.
\end{enumerate}
\end{theorem}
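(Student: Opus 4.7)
The strategy is to apply the abstract construction of exponential attractors in a weaker topology, in the style of Efendiev--Miranville--Zelik and Pata--Zelik \cite{Pata&Zelik06-2}, built on the compact embedding $\mathcal{H}_0 \hookrightarrow\hookrightarrow \mathcal{H}_{-1}$, which follows componentwise from Rellich's theorem on $\Omega$ and on $\Gamma$. The scheme requires three ingredients: (i) a positively invariant set bounded in $\mathcal{H}_0$ and absorbing and compact in $\mathcal{H}_{-1}$; (ii) a contraction-plus-smoothing decomposition of the time-$t^{*}$ map; and (iii) H\"older continuity of trajectories in the $\mathcal{H}_{-1}$ norm.

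For (i), starting from the absorbing set $\mathcal{B}_0$ of Lemma \ref{t:abs-set}, I would set $\mathcal{B}_1 := \overline{\bigcup_{t\ge t_1} S_\alpha(t)\mathcal{B}_0}^{\mathcal{H}_{-1}}$ for $t_1$ large enough that $S_\alpha(t)\mathcal{B}_0\subset\mathcal{B}_0$ whenever $t\ge t_1$. The $\mathcal{H}_0$-bound passes to the $\mathcal{H}_{-1}$-closure by weak lower semicontinuity and is uniform in $\alpha\in[0,1]$, and $\mathcal{B}_1$ is compact in $\mathcal{H}_{-1}$ by the compact embedding. The heart of the proof is (ii): establishing, for some $t^{*}>0$ independent of $\alpha$ and for all $\zeta_1,\zeta_2\in \mathcal{B}_1$, a decomposition
\begin{equation*}
S_\alpha(t^{*})\zeta_1 - S_\alpha(t^{*})\zeta_2 = L^{c}(\zeta_1,\zeta_2) + L^{s}(\zeta_1,\zeta_2),
\end{equation*}
with $\|L^{c}\|_{\mathcal{H}_{-1}} \le \kappa \|\zeta_1-\zeta_2\|_{\mathcal{H}_{-1}}$ for some $\kappa<1/2$, and $\|L^{s}\|_{\mathcal{H}_0} \le C\|\zeta_1-\zeta_2\|_{\mathcal{H}_{-1}}$. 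To obtain this I would replay the multiplier computation of Theorem \ref{t:global}, but at the $\mathcal{H}_{-1}$ level: apply $A_\alpha^{-1/2}$ to the difference system \eqref{eq:differences} (equivalently, test against the ``primitive'' $\bar U$ of $\bar\zeta$ obtained by solving an elliptic problem coupling $\Omega$ and $\Gamma$ compatibly with the Neumann flux) and run the same energy--type identity. The strong damping $-\omega\Delta u_t - \alpha\omega\Delta_\Gamma u_t$ together with the static $u_t$ and $u$ terms yield exponential decay at the $\mathcal{H}_{-1}$ level and produce $L^{c}$, while the Nemytskii differences $f(u^{1})-f(u^{2})$ and $g(u^{1})-g(u^{2})$, estimated by Lemma \ref{t:operator-F} using the uniform $\mathcal{H}_0$-bound on $\mathcal{B}_1$ and the compact embeddings $L^{2}(\Omega)\hookrightarrow H^{-1}(\Omega)$ and $L^{2}(\Gamma)\hookrightarrow H^{-1}(\Gamma)$, gain a full power of regularity and yield $L^{s}$.

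Ingredient (iii) is immediate from the mild-solution formula \eqref{mild}, Proposition \ref{t:wk-unique}, and the $\mathcal{H}_0$-bound on $\mathcal{B}_1$, which together give H\"older estimates of the form $\|S_\alpha(t)\zeta - S_\alpha(s)\zeta\|_{\mathcal{H}_{-1}} \le C|t-s|^{1/2}$ on $\mathcal{B}_1$. The abstract theorem then produces a discrete exponential attractor $\mathcal{M}^{d}_{-1,\alpha}$ for the iteration $S_\alpha(t^{*})^{n}$ with finite fractal dimension in $\mathcal{H}_{-1}$, and the continuous-time weak exponential attractor is taken to be $\mathcal{M}_{-1,\alpha} := \bigcup_{t\in[0,t^{*}]} S_\alpha(t)\mathcal{M}^{d}_{-1,\alpha}$, for which properties (1)--(4) follow from the construction, the H\"older continuity in time, and the uniform Lipschitz continuity of the semigroup on $\mathcal{B}_1$. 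Since $R_0$, $\kappa$, $C$, $t^{*}$, and the H\"older constant are all independent of $\alpha\in[0,1]$, so is the fractal dimension. The principal obstacle is the decomposition in (ii): the dynamic flux $\partial_{\mathbf{n}} u_t$ couples the bulk and boundary energies, so the inverse operator used to pass to $\mathcal{H}_{-1}$ must respect both the Laplace--Beltrami operator and the Neumann trace, and the rate $\kappa<1/2$ must be extracted uniformly in $\alpha$ even at the limit $\alpha=0$, where the surface damping of $u_t$ degenerates.
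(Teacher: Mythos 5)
Your overall architecture coincides with the paper's: both proofs invoke an abstract exponential-attractor scheme in the weaker space built on the compact embedding $\mathcal{H}_0\hookrightarrow\mathcal{H}_{-1}$ (the paper's Proposition \ref{abstract1}, with hypotheses (H1)--(H3)), both take (H1) directly from Lemma \ref{t:abs-set}, and both obtain the contraction-plus-smoothing decomposition by splitting the difference of two trajectories into the solution of the \emph{linear homogeneous} system with initial datum $\bar\zeta_0$ (the contracting part $L$) and the solution of the system forced by $f(u^2)-f(u^1)$, $g(u^2)-g(u^1)$ with zero data (the smoothing part $K$). Where you genuinely diverge is in how the $\mathcal{H}_{-1}$-level energy estimates are realized. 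You propose to apply $A_\alpha^{-1/2}$ to the difference system, or to test against an elliptic ``primitive'' of $\bar\zeta$, and you correctly flag that the dynamic flux $\partial_{\mathbf{n}}u_t$ makes it unclear how to build a negative-order spatial operator compatible with both the Laplace--Beltrami operator and the Neumann trace; as written, that step is not resolved, and it is exactly the step on which your argument would live or die. The paper avoids the issue entirely by a \emph{temporal} rather than spatial device: it sets $\bar\phi(t):=\int_0^t e^{-\ep(t-\tau)}\bar v(\tau)\,d\tau$ (and likewise $\bar\theta$ for the forced part), observes that these convolved variables satisfy the same PDE system because the linear part is autonomous, runs the standard multiplier $\bar\phi_t+\ep\bar\phi=\bar v$ in $L^2$, and identifies the resulting $\mathcal{H}_0$-energy of $\widehat\varphi$ with the $\mathcal{H}_{-1}$-type norm of $\bar\varphi$. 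This buys the exponential decay $\kappa=Ce^{-\nu_2 t^*/2}$ and the smoothing bound without ever inverting a spatial operator, and it is uniform in $\alpha\in[0,1]$ including the degenerate boundary damping at $\alpha=0$. Two smaller discrepancies: for (H3) you assert H\"older continuity in time ``immediately'' from the mild formula on all of $\mathcal{B}_1$, whereas the paper only needs (and only proves) Lipschitz continuity on $[t^*,2t^*]$, using the Gevrey/analytic smoothing bound $\|A_0e^{A_0t}\|_{\mathcal{L}(\mathcal{H}_0)}\le Ct^{-2}$ to control $\partial_t\zeta$ --- note that $A_\alpha$ does \emph{not} map $\mathcal{H}_0$ boundedly into $\mathcal{H}_{-1}$ (the term $\omega\Delta v$ with $v\in L^2(\Omega)$ lands only in $H^{-2}$), so an estimate near $t=0$ is not as immediate as you suggest; and for property (3), attraction of \emph{all} bounded sets of $\mathcal{H}_0$ rather than just $\mathcal{B}_0$, the paper passes through the transitivity of exponential attraction (Proposition \ref{t:exp-attr}), a step your sketch omits. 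If you replace your spatial-inverse step with the paper's exponential time-convolution, the rest of your outline goes through.
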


\begin{corollary}
In the topology of $\mathcal{H}_{-1}$, the global attractor possesses finite fractal dimension. 
Indeed, there holds
\begin{equation*}
{\rm{dim}}_F(\mathcal{A}_{\alpha},\mathcal{H}_{-1}) \le {\rm{dim}}_F(\mathcal{M}_{-1,\alpha},\mathcal{H}_{-1}) \le C,
\end{equation*}
for some constant $C>0$, independent of $\alpha$.
\end{corollary}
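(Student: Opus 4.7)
The strategy is to argue that $\mathcal{A}_\alpha \subset \mathcal{M}_{-1,\alpha}$ when both sets are viewed in $\mathcal{H}_{-1}$, whence the fractal-dimension bound follows from monotonicity of $\mu_{\mathcal{H}_{-1}}(\cdot,r)$ under inclusion. First I would recall that by Theorem \ref{t:global} the set $\mathcal{A}_\alpha$ is compact in $\mathcal{H}_0$ (hence bounded in $\mathcal{H}_0$, hence also bounded and closed in the weaker topology of $\mathcal{H}_{-1}$, via the continuous injection $\mathcal{H}_0\hookrightarrow\mathcal{H}_{-1}$), and fully invariant under $S_\alpha$, i.e.\ $S_\alpha(t)\mathcal{A}_\alpha=\mathcal{A}_\alpha$ for all $t\ge0$. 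Combining this invariance with property (3) of Theorem \ref{t:wk-exp-attr} applied to $B=\mathcal{A}_\alpha$ yields, for every $t\ge 0$,
\[
\mathrm{dist}_{\mathcal{H}_{-1}}(\mathcal{A}_\alpha,\mathcal{M}_{-1,\alpha})
=\mathrm{dist}_{\mathcal{H}_{-1}}(S_\alpha(t)\mathcal{A}_\alpha,\mathcal{M}_{-1,\alpha})
\le Q(\|\mathcal{A}_\alpha\|_{\mathcal{H}_0})e^{-\nu t},
\]
and letting $t\to\infty$ forces $\mathrm{dist}_{\mathcal{H}_{-1}}(\mathcal{A}_\alpha,\mathcal{M}_{-1,\alpha})=0$. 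Since $\mathcal{M}_{-1,\alpha}$ is compact, and therefore closed, in $\mathcal{H}_{-1}$ by property (1), this shows $\mathcal{A}_\alpha \subset \mathcal{M}_{-1,\alpha}$ as subsets of $\mathcal{H}_{-1}$.

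Next, the inclusion $\mathcal{A}_\alpha \subset \mathcal{M}_{-1,\alpha}$ implies that any finite cover of $\mathcal{M}_{-1,\alpha}$ by $\mathcal{H}_{-1}$-balls of radius $r$ is also a cover of $\mathcal{A}_\alpha$, so
\[
\mu_{\mathcal{H}_{-1}}(\mathcal{A}_\alpha,r)\le \mu_{\mathcal{H}_{-1}}(\mathcal{M}_{-1,\alpha},r)\qquad\text{for every }r>0.
\]
Taking the $\limsup$ in the definition \eqref{fdimension} I immediately obtain the first inequality of the corollary, $\dim_F(\mathcal{A}_\alpha,\mathcal{H}_{-1})\le \dim_F(\mathcal{M}_{-1,\alpha},\mathcal{H}_{-1})$.

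The second inequality, together with the uniformity in $\alpha$, is inherited from Theorem \ref{t:wk-exp-attr}(4). The only thing that requires verification is that the constant $C$ in that theorem can be chosen independently of the perturbation parameter. I would trace through the construction of $\mathcal{M}_{-1,\alpha}$ and note that every quantitative ingredient used there is already uniform in $\alpha$: the bounded absorbing set $\mathcal{B}_0$ of Lemma \ref{t:abs-set} has radius $R_0$ independent of $\alpha$; the Lipschitz bound \eqref{SLipcont} of Corollary \ref{sf-a} depends only on $R$ and $T$, not on $\alpha$; and the smoothing/compactness ingredients driving the $\alpha$-contraction in the proof of Theorem \ref{t:global}, expressed through the precompact pseudometric $\eta$ and the coefficient $\nu<1$ there, likewise have $\alpha$-independent constants. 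Consequently the fractal-dimension estimate for $\mathcal{M}_{-1,\alpha}$ produced by the standard construction yields a bound $C$ depending only on $R_0$ and on the structural parameters $\ell_1,\ell_2,\rho,\omega,\lambda_\Omega,\lambda_\Gamma,|\Omega|,|\Gamma|$, but not on $\alpha\in[0,1]$.

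The main (in fact essentially the only) obstacle is the bookkeeping in the last step, namely confirming genuine $\alpha$-independence of $C$ by revisiting the proof of Theorem \ref{t:wk-exp-attr}; the first two paragraphs are formal consequences of invariance and of the exponential attraction property in $\mathcal{H}_{-1}$.
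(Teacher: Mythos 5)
Your proposal is correct and follows essentially the same (standard) route the paper relies on: the paper leaves the corollary's proof implicit, but Remark \ref{rem_att} indicates precisely this argument --- the invariance of $\mathcal{A}_\alpha$ plus exponential attraction in $\mathcal{H}_{-1}$ forces $\mathcal{A}_\alpha\subset\mathcal{M}_{-1,\alpha}$, monotonicity of covering numbers gives the first inequality, and the $\alpha$-independence of $C$ comes from the $\alpha$-uniform constants in (H1)--(H3) established in Lemmas \ref{t:abs-set}, \ref{t:to-H2}, and \ref{t:to-H3}. Nothing further is needed.
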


\begin{remark}
Naturally, after demonstrating the existence of global attractors for Problem {\textbf{P}}$_\alpha$, $\alpha\in[0,1]$, the next question is whether the global attractors are finite dimensional, {\em{in the standard phase space}} $\mathcal{H}_0$. 
Unfortunately, we cannot yet conclude that the fractal dimension of $\mathcal{M}_{-1,\alpha}$ (nor $\mathcal{A}_{\alpha}$) in $\mathcal{H}_0$ is finite.
\end{remark}

The proof of Theorem \ref{t:wk-exp-attr} follows from the application of an abstract result modified only to suit our needs here (for further reference, see for example, \cite{EFNT95,EMZ00,GGMP05}, and also the reference associated with Remark \ref{rem_att}\ below).

\begin{proposition}  \label{abstract1}
Let $H_0$ and $H_{-1}$ be Hilbert spaces such that the embedding $H_0\hookrightarrow H_{-1}$ is compact. 
Let $S=(S(t))_{t\ge0} $ be a semiflow on $H_0$. 
Assume the following hypotheses hold:

\begin{enumerate}
\item[(H1)] There exists a bounded absorbing set $\mathcal{B}_0\subset H_0$ which is positively invariant for $S(t).$ 
More precisely, there exists a time $t_{0}>0$ (possibly depending on the radius of $\mathcal{B}_0$) such that, for all $t\ge t_0,$
\begin{equation*}
S(t)\mathcal{B}_0\subset \mathcal{B}_0.
\end{equation*}

\item[(H2)] There is $t^*\ge t_{0}$ such that the map $S(t^*)$ admits the decomposition, for all $\zeta_{01},\zeta _{02}\in \mathcal{B}_0,$ 
\begin{equation*}
S(t^*)\zeta_{01} - S(t^*)\zeta_{02} = L(\zeta_{01},\zeta_{02}) + K(\zeta_{01},\zeta_{02}),
\end{equation*}
where, for some constants $\kappa=\kappa(t^*) \in (0,\frac{1}{2})$ and $\Lambda=\Lambda(t^*)\ge 0$, the following hold:
\begin{equation}  \label{H2-L}
\|L(\zeta_{01},\zeta_{02})\|_{H_{-1}} \le \kappa \|\zeta_{01}-\zeta_{02} \|_{H_{-1}}
\end{equation}
and
\begin{equation}
\|K(\zeta_{01},\zeta_{02})\|_{H_0} \le \Lambda \|\zeta_{01}-\zeta_{02} \|_{H_{-1}}.  \label{H2-K}
\end{equation}

\item[(H3)] The map
\begin{equation}  \label{to-H3}
(t,\zeta_0)\mapsto S(t)\zeta:[t^*,2t^*]\times \mathcal{B}_0\rightarrow \mathcal{B}_0
\end{equation}
is Lipschitz continuous on $\mathcal{B}_0$ in the topology of $H_{-1}$.
\end{enumerate}

Then the semiflow $S$ admits an exponential attractor $\mathcal{M}_{-1}$ in $\mathcal{B}_0.$
\end{proposition}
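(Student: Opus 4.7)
The plan is to build the exponential attractor first for the discrete semigroup $\{S(t^*)^n\}_{n\ge 0}$ using the smoothing decomposition (H2), and then to lift the construction to the continuous-time semiflow via the Lipschitz property (H3). This is the standard two-stage strategy pioneered in \cite{EFNT95,EMZ00}, adapted here to the weak-metric setting where contraction and smoothing are measured in the weaker space $H_{-1}$.

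\emph{Discrete cover estimate.} Fix a base point $\zeta_0^*\in\mathcal{B}_0$ and let $B\subset\mathcal{B}_0$ have $H_{-1}$-diameter at most $d$. For each $\zeta\in B$, (H2) gives
\[
S(t^*)\zeta - S(t^*)\zeta_0^* = L(\zeta,\zeta_0^*) + K(\zeta,\zeta_0^*),
\]
with $\|L(\zeta,\zeta_0^*)\|_{H_{-1}}\le \kappa d$ and $\|K(\zeta,\zeta_0^*)\|_{H_0}\le \Lambda d$. Since the embedding $H_0\hookrightarrow H_{-1}$ is compact, the $H_0$-ball of radius $\Lambda d$ is covered by a finite number $N_0$ of $H_{-1}$-balls of radius $\kappa d$, with $N_0$ depending only on the fixed ratio $\Lambda/\kappa$ (hence independent of $d$). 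Consequently $S(t^*)B$ is covered by $N_0$ $H_{-1}$-balls of radius $2\kappa d$. Because $\kappa<\tfrac12$, the factor $2\kappa$ is strictly less than $1$, so iterating the improvement starting from $\mathcal{B}_0$ of $H_{-1}$-diameter $d_0$ yields, for every $n\ge 0$, a finite set $E_n\subset\mathcal{B}_0$ of cardinality at most $N_0^n$ whose associated $H_{-1}$-balls of radius $(2\kappa)^n d_0$ cover $S(nt^*)(\mathcal{B}_0)$. I then define the discrete exponential attractor
\[
\mathcal{M}_{-1}^{\,d} := \overline{\bigcup_{n\ge 0} S(nt^*)(E_n)}^{\,H_{-1}},
\]
and read off directly: boundedness in $H_0$ (since each $E_n\subset\mathcal{B}_0$ and $\mathcal{B}_0$ is positively invariant by (H1)), compactness in $H_{-1}$ (geometric decay of covering radii), positive invariance $S(t^*)\mathcal{M}_{-1}^{\,d}\subset\mathcal{M}_{-1}^{\,d}$ (built into the construction), finite fractal dimension in $H_{-1}$ with explicit bound $\log N_0/\log(1/(2\kappa))$, and exponential attraction of $\mathcal{B}_0$ at rate $-\log(2\kappa)/t^*$.

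\emph{Continuous-time extension.} Set
\[
\mathcal{M}_{-1} := \bigcup_{t\in[t^*,2t^*]} S(t)\,\mathcal{M}_{-1}^{\,d}.
\]
Positive invariance under the full semiflow is immediate from the semigroup law together with $S(t^*)\mathcal{M}_{-1}^{\,d}\subset\mathcal{M}_{-1}^{\,d}$. Boundedness in $H_0$ is inherited from the positive invariance of $\mathcal{B}_0$. The Lipschitz property (H3) realizes $\mathcal{M}_{-1}$ as the Lipschitz image of the compact finite-dimensional set $[t^*,2t^*]\times\mathcal{M}_{-1}^{\,d}$ in $\mathbb{R}\times H_{-1}$; hence $\mathcal{M}_{-1}$ is compact in $H_{-1}$ and retains finite fractal dimension, with a bound of the form $\dim_F^{H_{-1}}(\mathcal{M}_{-1})\le 1+\log N_0/\log(1/(2\kappa))$. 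Continuous-time exponential attraction follows by writing any large $t$ as $t=nt^*+s$ with $s\in[t^*,2t^*)$, applying discrete exponential attraction to $S(nt^*)$, and invoking the uniform Lipschitz bound on $S(s)$ for $s$ in the bounded interval.

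\emph{Main obstacle.} The delicate point is the numerical balance in the discrete covering iteration: covering cardinalities must grow only exponentially while radii shrink geometrically. This is exactly what the hypothesis $\kappa<\tfrac12$ buys, since it leaves just enough slack to absorb the additive contribution from covering $K(\cdot,\cdot)$ (bounded in $H_0$ by $\Lambda d$) by $H_{-1}$-balls of radius comparable to $\kappa d$. The compact embedding $H_0\hookrightarrow H_{-1}$ is essential precisely here, through the finiteness of the entropy constant $N_0=N_0(\Lambda/\kappa)$. Everything else is routine, but (H3) is indispensable in the continuous-time extension to prevent an explosion of fractal dimension upon thickening the discrete orbit.
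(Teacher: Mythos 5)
The paper never proves this proposition: it is imported as an abstract result from \cite{EFNT95,EMZ00,GGMP05}, and your proposal reconstructs the standard discrete-then-continuous construction from exactly those references, so the overall strategy is the right one. There is, however, a concrete quantitative gap in the covering iteration. You cover the $H_0$-ball of radius $\Lambda d$ containing the $K$-parts by $N_0$ balls of $H_{-1}$-radius $\kappa d$ and conclude that $S(t^*)B$ is covered by $N_0$ balls of radius $2\kappa d$. But you measure the input set by its \emph{diameter} $d$, so each new piece has diameter $4\kappa d$; moreover the new centers $S(t^*)\zeta_0^*+c_i$ need not lie in $\mathcal{B}_0$, so before (H2) can be applied again you must re-center each ball at a point of $S(t^*)B\subset\mathcal{B}_0$ (this re-centering is also what is missing from your unjustified claims that $E_n\subset\mathcal{B}_0$ and that $\mathcal{M}_{-1}^{\,d}$ is bounded in $H_0$). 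Either way the true per-step contraction factor is $4\kappa$, not $2\kappa$, which is less than $1$ only for $\kappa<\frac14$, whereas the hypothesis allows any $\kappa<\frac12$; for $\kappa\in[\frac14,\frac12)$ the radii $(2\kappa)^nd_0$ you assert are not what your iteration produces and the covers do not shrink. The repair is standard and shows exactly where the slack $\frac12-\kappa$ must be spent: cover the $K$-parts by $H_{-1}$-balls of radius $\ep d$ with $\ep<\frac12-\kappa$ (so $N_0$ now depends on $\Lambda/\ep$, still independent of $d$ and $n$), which after re-centering yields pieces of diameter $2(\kappa+\ep)d<d$ and restores the geometric decay.

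Two further bookkeeping points. The set $\mathcal{M}_{-1}^{\,d}=\overline{\bigcup_{n}S(nt^*)(E_n)}$ is not positively invariant as written (and the extra application of $S(nt^*)$ to centers already living in $S(nt^*)\mathcal{B}_0$ is not what you want): the standard device is to build the center sets inductively so that $S(t^*)E_n\subseteq E_{n+1}$, augmenting each $E_{n+1}$ by the images of $E_n$, which only changes the cardinality bound and leaves the dimension estimate intact, and then to set $\mathcal{M}_{-1}^{\,d}=\overline{\bigcup_n E_n}$. Finally, the application in Theorem \ref{t:wk-exp-attr} requires exponential attraction of all bounded subsets of $H_0$, not merely of $\mathcal{B}_0$; this follows from your estimate together with (H1) and the transitivity of exponential attraction (Proposition \ref{t:exp-attr}), but it needs to be said. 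With these corrections your argument closes and coincides with the proofs in the cited references.
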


We now show that the hypotheses (H2) and (H3) hold for the semiflow $S_\alpha$ generated by the mild solutions of Problem \textbf{P}$_\alpha$. 
Of course, the first hypothesis (H1) was already shown in Lemma \ref{t:abs-set}.
Moving forward, we now show (H2) by making the appropriate ``lower-order'' estimates in the norm of $\mathcal{H}_{-1}.$ 

\begin{lemma} \label{t:to-H2} 
For all $\alpha\in[0,1],$ condition (H2) holds. 
\end{lemma}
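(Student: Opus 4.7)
The plan is to verify (H2) by splitting the difference of two trajectories into an exponentially decaying linear part $L$ and a smoother forced part $K$. Let $\zeta_{01},\zeta_{02}\in\mathcal{B}_0$, set $\zeta^i(t) := S_\alpha(t)\zeta_{0i}$, $\bar\zeta = \zeta^1-\zeta^2$, and $\bar u = u^1-u^2$. Decompose $\bar u = v+w$, where $v$ solves the linear homogeneous version of Problem {\textbf{P}}$_\alpha$ with initial data $\bar\zeta_0 := \zeta_{01}-\zeta_{02}$, and $w$ solves the same linear equations but with zero initial data and source terms $-(f(u^1)-f(u^2))$ in $\Omega$ and $-(g(u^1)-g(u^2))$ on $\Gamma$. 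Set $\zeta_v := (v,v_t,v_{\mid\Gamma},v_{t\mid\Gamma})$, $\zeta_w := (w,w_t,w_{\mid\Gamma},w_{t\mid\Gamma})$, and define $L(\zeta_{01},\zeta_{02}) := \zeta_v(t^*)$ and $K(\zeta_{01},\zeta_{02}) := \zeta_w(t^*)$, with $t^*$ to be chosen below.

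For the contractive estimate \eqref{H2-L}, I would extend the linear semigroup $e^{A_\alpha t}$ by density to a contraction semigroup on $\mathcal{H}_{-1}$ and construct a lower-order Lyapunov functional analogous to $\Psi$ from Lemma \ref{t:abs-set}, for instance
\[
\Psi_{-1}(\zeta_v) := \|\zeta_v\|^2_{\mathcal{H}_{-1}} + \epsilon\langle v,(-\Delta+I)^{-1}v_t\rangle_{L^2(\Omega)} + \epsilon\langle v,(-\Delta_\Gamma+I)^{-1}v_t\rangle_{L^2(\Gamma)}.
\]
The ``static damping'' terms $v_t$ (bulk and boundary) together with Poincar\'e-type inequalities at the $H^{-1}$ level would yield a differential inequality $\frac{d}{dt}\Psi_{-1} + c\Psi_{-1} \le 0$, with decay constants independent of $\alpha\in[0,1]$; this gives $\|\zeta_v(t)\|_{\mathcal{H}_{-1}} \le Ce^{-ct}\|\bar\zeta_0\|_{\mathcal{H}_{-1}}$, so choosing $t^*$ large enough forces $\kappa := Ce^{-ct^*} < \tfrac{1}{2}$.

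For the smoothing estimate \eqref{H2-K}, the key gain is that the nonlinearities map $H^1\to H^{-1}$. By (\ref{assm-3})--(\ref{assm-4}), H\"older, and the embeddings $H^1(\Omega)\hookrightarrow L^6(\Omega)$ together with $H^{1/2}(\Gamma)\hookrightarrow L^4(\Gamma)$, the uniform $H^1$-bound on $u^i\in\mathcal{B}_0$ delivers
\[
\|f(u^1)-f(u^2)\|_{H^{-1}(\Omega)} \le C\|\bar u\|_{L^2(\Omega)}, \qquad \|g(u^1)-g(u^2)\|_{H^{-1/2}(\Gamma)} \le C\|\bar u\|_{L^2(\Gamma)}.
\]
A standard $\mathcal{H}_0$-energy identity for $\zeta_w$, obtained by multiplying the $w$-equations by $2w_t$, combined with Young's inequality in the $H^{-1}$--$H^{1}(\Omega)$ and $H^{-1/2}$--$H^{1/2}(\Gamma)$ dualities (using $\|w_t\|_{H^{1/2}(\Gamma)} \le C\|w_t\|_{H^1(\Omega)}$ by the trace theorem to absorb the boundary source into the bulk strong damping $\omega\|\nabla w_t\|^2_{L^2(\Omega)}$), gives
\[
\tfrac{d}{dt}\|\zeta_w\|^2_{\mathcal{H}_0} \le C\bigl(\|\bar u\|^2_{L^2(\Omega)} + \|\bar u\|^2_{L^2(\Gamma)}\bigr).
\]
Integrating in time and inserting a lower-order continuous-dependence bound $\|\bar\zeta(t)\|_{\mathcal{H}_{-1}} \le e^{Q(R)t}\|\bar\zeta_0\|_{\mathcal{H}_{-1}}$ (derived as in Proposition \ref{t:wk-unique} but testing with $(-\Delta+I)^{-1}\bar u_t$ and the boundary analogue) yields $\|\zeta_w(t^*)\|_{\mathcal{H}_0} \le \Lambda\|\bar\zeta_0\|_{\mathcal{H}_{-1}}$.

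The main obstacle is establishing the uniform-in-$\alpha$ exponential decay of the linear semigroup in the weak topology $\mathcal{H}_{-1}$: the lower-order multiplier argument must be compatible with the boundary identification $v_{t\mid\Gamma} = (v_{\mid\Gamma})_t$ under the inverse operators $(-\Delta+I)^{-1}$ and $(-\Delta_\Gamma+I)^{-1}$, and the decay constant $c$ must be shown not to degenerate as $\alpha\to 0$, where the surface-diffusion damping disappears. Once this step is secured, the bulk strong damping $-\omega\Delta w_t$ with $\omega\in(0,1]$ provides, via the trace theorem, enough regularization to absorb both the interior and the boundary forcing uniformly in $\alpha\in[0,1]$, so the smoothing estimate follows.
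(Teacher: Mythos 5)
Your decomposition of the difference into a homogeneous part $v$ (carrying the initial data) and a forced part $w$ (carrying the nonlinear sources with zero data) is exactly the paper's, and your treatment of the smoothing estimate \eqref{H2-K} via the $H^{-1}(\Omega)$ and $H^{-1/2}(\Gamma)$ bounds on $f(u^1)-f(u^2)$ and $g(u^1)-g(u^2)$, absorbed into the bulk strong damping through the trace theorem, is sound and arguably cleaner than what the paper does. The problem is that the step you yourself flag as ``the main obstacle''---uniform-in-$\alpha$ exponential decay of the linear part in the $\mathcal{H}_{-1}$ topology---is the actual content of the lemma, and the route you propose for it does not close. Testing the bulk equation with $(-\Delta+I)^{-1}v_t$ and the boundary equation with $(-\Delta_\Gamma+I)^{-1}v_{t\mid\Gamma}$ destroys the cancellation of the flux terms $\partial_{\mathbf{n}}(v+\omega v_t)$: the trace of $(-\Delta+I)^{-1}v_t$ on $\Gamma$ is not $(-\Delta_\Gamma+I)^{-1}(v_{t\mid\Gamma})$, so the boundary integral produced by integrating by parts in $\Omega$ no longer matches the multiplier used on $\Gamma$, and the energy identity that your functional $\Psi_{-1}$ is supposed to satisfy never materializes. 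The same defect undermines the lower-order continuous-dependence estimate $\|\bar\zeta(t)\|_{\mathcal{H}_{-1}}\le e^{Q(R)t}\|\bar\zeta_0\|_{\mathcal{H}_{-1}}$ that your $K$-estimate relies on, since you propose to derive it by the identical mismatched multipliers.

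The paper's device for lowering the topology is different and is what makes the proof go through: instead of applying inverse elliptic operators in space, it convolves in time, setting $\bar\phi(t):=\int_0^t e^{-\ep(t-\tau)}\bar v(\tau)\,d\tau$ so that $\bar\phi_t+\ep\bar\phi=\bar v$. Because this transformation commutes with every spatial operator, $\bar\phi$ satisfies the \emph{same} linear system \eqref{diff-decomp-v}, and the standard $\mathcal{H}_0$ multiplier $\bar\phi_t+\ep\bar\phi$ applies verbatim---the bulk and boundary flux terms cancel exactly as in Lemma \ref{t:abs-set}---yielding $\frac{d}{dt}\Phi+\nu_2\Phi\le 0$ with $\nu_2$ independent of $\alpha$ (the $\alpha\omega$ surface-diffusion term is only ever used with a favorable sign). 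The decay of $\bar\varphi$ in $\mathcal{H}_{-1}$ is then read off from the identification of $\|\widehat\varphi(t)\|_{\mathcal{H}_0}$ with $\|\bar\varphi(t)\|_{\mathcal{H}_{-1}}$, and the forced part is handled by the same transformation applied to \eqref{diff-decomp-w}. Without this (or some substitute that genuinely produces a coercive lower-order energy identity compatible with the dynamic boundary coupling), your argument establishes neither \eqref{H2-L} nor the $\mathcal{H}_{-1}$ Gronwall bound feeding \eqref{H2-K}, so the proof as written has a genuine gap at its central point.
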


\begin{proof}
Let $\alpha\in[0,1]$, and $\zeta_{01} = (u_{01},u_{11},\gamma_{01},\gamma_{11}), \zeta_{02} = (u_{02},u_{12},\gamma_{02},\gamma_{12})\in\mathcal{B}_0$. 
For $t>0$, let 
\[
\zeta^1(t)=(u^1(t),u_t^1(t),u_{\mid\Gamma}^1(t),u_{t\mid\Gamma}^1(t)) \quad \text{and} \quad \zeta^2(t)=(u^2(t),u_t^2(t),u_{\mid\Gamma}^2(t),u_{t\mid\Gamma}^2(t)),
\]
denote the corresponding global solutions of Problem {\textbf{P}}$_\alpha$ on $[0,T]$ with the initial data $\zeta_{01}$ and $\zeta_{02}$, respectively.
For all $t\in(0,T]$, set
\begin{align}
\bar\zeta(t) & := \zeta^1(t) - \zeta^2(t)  \notag \\
& = \left( u^1(t),u_t^1(t),u_{\mid\Gamma}^1(t),u_{t\mid\Gamma}^1(t) \right) - \left( u^2(t),u^2_t(t),u^2_{\mid\Gamma}(t),u^2_{t\mid\Gamma}(t) \right)  \notag \\
& =: \left( \bar u(t),\bar u_t(t),\bar u_{\mid\Gamma}(t),\bar u_{t\mid\Gamma}(t) \right),  \notag
\end{align}
and 
\begin{align}
\bar\zeta_0 & := \zeta_{01} - \zeta_{02}  \notag \\ 
& = \left( u_{01},u_{11},\gamma_{01},\gamma_{11} \right) - \left( u_{02},u_{12},\gamma_{02},\gamma_{12} \right) \notag \\
& = \left( u_{01}-u_{02},u_{11}-u_{12},\gamma_{01}-\gamma_{02},\gamma_{11}-\gamma_{12} \right)  \notag \\
& =: (\bar u_0,\bar u_1,\bar \gamma_0,\bar \gamma_1).  \notag
\end{align}
For each $t\geq 0$, decompose the difference $\bar{\zeta}(t):=\zeta^1(t)-\zeta^2(t)$ with $\bar{\zeta}_{0}:=\zeta_{01}-\zeta_{02}$ as follows:
\begin{align}
\bar{\zeta}(t) & = \left( \bar u(t),\bar u_t(t),\bar u_{\mid\Gamma}(t),\bar u_{t\mid\Gamma}(t) \right)  \notag \\ 
& = \left( \bar v(t),\bar v_t(t),\bar v_{\mid\Gamma}(t),\bar v_{t\mid\Gamma}(t) \right) + \left( \bar w(t),\bar w_t(t),\bar w_{\mid\Gamma}(t),\bar w_{t\mid\Gamma}(t) \right),  \notag \\ 
& = \bar{\varphi}(t)+\bar{\vartheta}(t),  \notag 
\end{align}
where $\bar{\varphi}(t)$ and $\bar{\vartheta}(t)$ are solutions of the problems:
\begin{equation}  \label{diff-decomp-v}
\left\{ \begin{array}{ll}
\bar{v}_{tt}-\omega\Delta\bar{v}_t+\bar{v}_{t}-\Delta\bar{v}+\bar{v}=0 & \text{in}\quad (0,\infty )\times\Omega \\ 
\bar{v}_{tt}+\partial_{\mathbf{n}}(\bar{v}+\omega\bar{v}_t)-\alpha\omega\Delta_\Gamma\bar{v}_t+\bar{v}_t-\Delta_\Gamma\bar{v}+\bar{v}=0 & \text{on} \quad (0,\infty)\times \Gamma \\ 
\bar{\varphi}(0)=\bar\zeta_{0} & \text{at} \quad \{0\}\times \Omega,
\end{array}
\right.
\end{equation}
and 
\begin{equation}  \label{diff-decomp-w}
\left\{ \begin{array}{ll}
\bar{w}_{tt}-\omega\Delta\bar{w}_t+\bar{w}_{t}-\Delta\bar{w}+\bar{w}=f(u^2)-f(u^1) & \text{in}\quad (0,\infty )\times\Omega \\ 
\bar{w}_{tt}+\partial_{\mathbf{n}}(\bar{w}+\omega\bar{w}_t)-\alpha\omega\Delta_\Gamma\bar{w}_t+\bar{w}_t-\Delta_\Gamma\bar{w}+\bar{w}=g(u^2)-g(u^1) & \text{on} \quad (0,\infty)\times \Gamma \\ 
\bar{\vartheta}(0)=\mathbf{0} & \text{at} \quad \{0\}\times \Omega.
\end{array}
\right.
\end{equation}

Let $\ep\in(0,1)$ to be chosen later and let 
\[
\bar\phi(t):=\int_0^t e^{-\ep(t-\tau)}\bar v(\tau)d\tau.
\]
Then $\bar\phi_t+\ep\bar\phi=\bar v$ and $\bar\phi(0)=0.$
Multiply equations (\ref{diff-decomp-v})$_1$--(\ref{diff-decomp-v})$_2$ by $e^{-\ep(t-\tau)}$ and integrate with respect to $\tau$ over $(0,t)$ to find,
\begin{align}
& \bar\phi_{tt}(t) - \omega\Delta\bar\phi_t(t) + \bar\phi_t(t) - \Delta\bar\phi(t) + \bar\phi(t) = 0 \quad \text{in} \quad \Omega,  \label{wkea1}
\end{align}
and
\begin{align}
& \bar\phi_{tt}(t) +\partial_{\mathbf{n}}(\omega\bar\phi_t(t)+\bar\phi(t)) - \alpha\omega\Delta_\Gamma\bar\phi_t(t) + \bar\phi_t(t) - \Delta_\Gamma\bar\phi(t) + \bar\phi(t) = 0 \quad \text{on} \quad \Gamma.  \label{wkea2}
\end{align}
Now we multiply (\ref{wkea1})--\eqref{wkea2} by $\bar\phi_t+\ep\bar\phi$ in $L^2(\Omega)$ to find,
\begin{align}
\frac{d}{dt} & \left\{ \|\bar \phi_t\|^2_{L^2(\Omega)} + 2\ep\langle \bar\phi_t,\bar\phi \rangle_{L^2(\Omega)} + \|\bar\phi\|^2_{H^1(\Omega)} \right.  \notag \\ 
& \left. + \|\bar\phi_t\|^2_{L^2(\Gamma)} + 2\ep\langle \bar\phi_t,\bar\phi \rangle_{L^2(\Gamma)} + \|\bar\phi\|^2_{H^1(\Gamma)} \right\}  \notag \\ 
& + 2\omega\|\nabla\bar\phi_t\|^2_{L^2(\Omega)} + 2(1-\ep)\|\bar\phi_t\|^2_{L^2(\Omega)} + 2\ep\omega\langle \nabla\bar\phi_t,\nabla\bar\phi\rangle_{L^2(\Omega)}  \notag \\ 
& + 2\ep\langle\bar\phi_t,\bar\phi\rangle_{L^2(\Omega)} + 2\ep\|\bar\phi\|^2_{H^1(\Omega)}  \notag \\ 
& + 2\alpha\omega\|\nabla_\Gamma\bar\phi_t\|^2_{L^2(\Gamma)} + 2(1-\ep)\|\bar\phi_t\|^2_{L^2(\Gamma)} + 2\ep\alpha\omega\langle \nabla_\Gamma\bar\phi_t,\nabla_\Gamma\bar\phi\rangle_{L^2(\Gamma)}  \notag \\ 
& + 2\ep\langle \bar\phi_t,\bar\phi\rangle_{L^2(\Gamma)} + 2\ep\|\bar\phi\|^2_{H^1(\Gamma)} = 0.  \label{wkea3}
\end{align}
Since the solutions $\zeta^1$ and $\zeta^2$ satisfy (\ref{regularity}), then $\widehat\varphi=(\bar\phi,\bar\phi_t,\bar\phi_{\mid\Gamma},\bar\phi_{t\mid\Gamma})\in C([0,\infty);\mathcal{H}_{-1})$.
Hence, each term of (\ref{wkea1})--\eqref{wkea2} when taken in the $L^2$ product with $\bar\phi_t+\ep\bar\phi=\bar v\in C([0,\infty);\mathcal{H}_0)$ is well-defined.

Define the following functional for all $\zeta=(u,v,\gamma,\delta)\in\mathcal{H}_0$ and for each $\ep>0$,
\begin{align}
\Phi(\zeta) := & \|\zeta\|^2_{\mathcal{H}_0} + 2\ep\langle u,v \rangle_{L^2(\Omega)} + 2\ep\langle \gamma,\delta \rangle _{L^2(\Gamma)}.  \label{wkea4}
\end{align}
(On trajectories $\widehat\varphi(t):=(\bar\phi(t),\bar\phi_t(t),\bar\phi_{\mid\Gamma}(t),\bar\phi_{t\mid\Gamma}(t))$, $t>0$, we denote $\Phi(\widehat\varphi)$ by $\Phi(t).$)
Observe, with the Cauchy--Schwarz inequality, Poincar\'{e} (\ref{Poincare}), and the trace embedding $H^1(\Omega)\hookrightarrow L^2(\Gamma)$, a straight forward calculation shows there are constants $C_1,C_2>0$, such that, for all $t\ge0,$
\begin{align}
C_1\|\widehat\varphi(t)\|^2_{\mathcal{H}_0} \le \Phi(t) \le C_2\|\widehat\varphi(t)\|^2_{\mathcal{H}_0}.  \label{wkea5}
\end{align}
Moreover, the map $t\mapsto \Phi(t)$ is $C^1([0,\infty)).$

After the two basic estimates, 
\begin{align}
2\ep\omega\langle \nabla\bar\phi_t,\nabla\bar\phi\rangle_{L^2(\Omega)} & \ge -2\omega\ep\cdot\frac{1}{\ep}\|\nabla\bar\phi_t\|^2_{L^2(\Omega)} - 2\omega\ep\cdot \frac{\ep}{4}\|\nabla\bar\phi\|^2_{L^2(\Omega)}, \notag \\ 
& \ge -2\omega\|\nabla\bar\phi_t\|^2_{L^2(\Omega)} - \frac{\omega\ep^2}{2}\|\bar\phi\|^2_{H^1(\Omega)}, \label{be-1}
\end{align}
and
\begin{align}
2\ep\alpha\omega\langle \nabla_\Gamma\bar\phi_t,\nabla_\Gamma\bar\phi\rangle_{L^2(\Gamma)} & \ge -2\alpha^2\omega\ep\cdot\frac{1}{\ep}\|\nabla_\Gamma\bar\phi_t\|^2_{L^2(\Gamma)} - 2\omega\ep\cdot \frac{\ep}{4}\|\nabla_\Gamma\bar\phi\|^2_{L^2(\Gamma)}, \notag \\ 
& \ge -2\alpha\omega\|\nabla_\Gamma\bar\phi_t\|^2_{L^2(\Gamma)} - \frac{\omega\ep^2}{2}\|\bar\phi\|^2_{H^1(\Gamma)}, \label{be-2}
\end{align}
then (\ref{wkea4}) and the identity (\ref{wkea3}) may be written into the differential inequality,
\begin{align}
\frac{d}{dt}\Phi & + 2(1-\ep)\|\bar\phi_t\|^2_{L^2(\Omega)} + 2\ep\langle\bar\phi_t,\bar\phi\rangle_{L^2(\Omega)} + 2\ep\left(1-\frac{\omega\ep}{4}\right)\|\bar\phi\|^2_{H^1(\Omega)}  \notag \\ 
& + 2(1-\ep)\|\bar\phi_t\|^2_{L^2(\Gamma)} + 2\ep\langle \bar\phi_t,\bar\phi\rangle_{L^2(\Gamma)} + 2\ep\left(1-\frac{\omega\ep}{4} \right)\|\bar\phi\|^2_{H^1(\Gamma)}  \notag \\ 
& \le 0.  \label{wkea6}
\end{align}
Hence, for any $\omega\in(0,1]$, there is some $\ep\in(0,1)$ and a $\nu_2>0$ in which (\ref{wkea6}) becomes, for almost all $t\ge0,$
\begin{align}
\frac{d}{dt}\Phi + \nu_2\Phi \le 0.  \label{wkea7}
\end{align}
Integration of (\ref{wkea7}) over $(s,t)$ yields, with the aid of (\ref{wkea5}),
\begin{align}
\|\widehat\varphi(t)\|_{\mathcal{H}_0} \le Ce^{-\nu_2(t-s)/2} \|\widehat\varphi(s)\|_{\mathcal{H}_0},  \label{wkea8}
\end{align}
where
\[
\widehat\varphi(t)=(\bar\phi(t),\bar\phi_t(t),\bar\phi_{\mid\Gamma}(t),\bar\phi_{t\mid\Gamma}(t)).
\]
Since
\begin{align}
\|\widehat\varphi(t)\|_{\mathcal{H}_0} & = \int_0^t e^{-\ep(t-\tau)} \|\bar\varphi(\tau)\|_{\mathcal{H}_0} d\tau  \notag \\ 
& = \|\bar\varphi(t)\|_{\mathcal{H}_{-1}},
\end{align}
(\ref{wkea8}) becomes, in the limit $s\rightarrow0^+$,
\begin{align}
\|\bar\varphi(t)\|_{\mathcal{H}_{-1}} \le Ce^{-\nu_2 t/2} \|\bar\zeta_0\|_{\mathcal{H}_{-1}}.  \notag
\end{align}
Thus, (\ref{H2-L}) holds for with $\kappa = Ce^{-\nu_2 t^*/2}$ for any fixed $t^*>\max\{t_0,\frac{2}{\nu_2}\ln(2C)\}$.

It remains to show that (\ref{H2-K}) holds to complete the proof.
This time, transform the system (\ref{diff-decomp-w}) with 
\[
\bar\theta(t):=\int_0^t e^{-\ep(t-\tau)}\bar w(\tau) d\tau,
\]
for some (new) $\ep\in(0,1)$ to be determined later below.
Again, $\bar\theta_t+\ep\bar\theta=\bar w$ and $\bar\theta(0)=0.$
Multiply equations (\ref{diff-decomp-w})$_1$--(\ref{diff-decomp-w})$_2$ by $e^{-\ep(t-\tau)}$ and integrate with respect to $\tau$ over $(0,t)$ to find,
\begin{align}
& \bar\theta_{tt}(t) - \omega\Delta\bar\theta_t(t) + \bar\theta_t(t) - \Delta\bar\theta(t) + \bar\theta(t)  \notag \\ 
& + \int_0^t e^{-\ep(t-\tau)} \left( f(u^1(\tau)) - f(u^2(\tau)) \right) d\tau = 0 \quad \text{in} \quad \Omega,  \label{wkea11}
\end{align}
and
\begin{align}
& \bar\theta_{tt}(t) + \partial_{\mathbf{n}}(\omega\bar\theta_t(t) + \bar\theta(t)) - \alpha\omega\Delta_\Gamma\bar\theta_t(t) + \bar\theta_t(t) - \Delta_\Gamma\bar\theta(t) + \bar\theta(t)  \notag \\ 
& + \int_0^t e^{-\ep(t-\tau)} \left( g(u^1(\tau)) - g(u^2(\tau)) \right) d\tau = 0 \quad \text{on} \quad \Gamma.  \label{wkea12}
\end{align}
As before, multiplication of (\ref{wkea11})--\eqref{wkea12} in $L^2(\Omega)$ by $\bar\theta_t+\ep\bar\theta$ is defined.
For notational ease, we proceed to denote 
\[
\mathbb{F}(t) := \int_0^t e^{-\ep(t-\tau)} f(u^1(\tau))-f(u^2(\tau)) d\tau,
\]
and
\[
\mathbb{G}(t) := \int_0^t e^{-\ep(t-\tau)} g(u^1(\tau))-g(u^2(\tau)) d\tau.
\]
Now observe, 
\begin{align}
& \left\langle \mathbb{F}(t),\bar\theta_t \right\rangle_{L^2(\Omega)} = \frac{d}{dt} \left\langle \mathbb{F}(t), \bar\theta \right\rangle_{L^2(\Omega)} - \left\langle f(u^1)-f(u^2),\bar\theta \right\rangle_{L^2(\Omega)} + \ep\left\langle \mathbb{F}(t), \bar\theta \right\rangle_{L^2(\Omega)},  \label{diff-help-1}
\end{align}
and similarly,
\begin{align}
& \left\langle \mathbb{G}(t),\bar\theta_t \right\rangle_{L^2(\Gamma)} = \frac{d}{dt} \left\langle \mathbb{G}(t), \bar\theta \right\rangle_{L^2(\Gamma)} - \left\langle g(u^1)-g(u^2),\bar\theta \right\rangle_{L^2(\Gamma)} + \ep\left\langle \mathbb{G}(t), \bar\theta \right\rangle_{L^2(\Gamma)}.  \label{diff-help-2}
\end{align}
Hence, here we obtain the differential identity,  
\begin{align}
\frac{d}{dt} & \left\{ \|\bar \theta_t\|^2_{L^2(\Omega)} + 2\ep\langle \bar\theta_t,\bar\theta \rangle_{L^2(\Omega)} + \|\bar\theta\|^2_{H^1(\Omega)} \right.  \notag \\ 
& + \|\bar\theta_t\|^2_{L^2(\Gamma)} + 2\ep\langle \bar\theta_t,\bar\theta \rangle_{L^2(\Gamma)} + \|\bar\theta\|^2_{H^1(\Gamma)}  \notag \\ 
& + \left. 2\left\langle \mathbb{F}(t), \bar\theta \right\rangle_{L^2(\Omega)} + 2\left\langle \mathbb{G}(t), \bar\theta \right\rangle_{L^2(\Gamma)} \right\}  \notag \\ 
& + 2\omega\|\nabla\bar\theta_t\|^2_{L^2(\Omega)} + 2(1-\ep)\|\bar\theta_t\|^2_{L^2(\Omega)} + 2\ep\omega\langle \nabla\bar\theta_t,\nabla\bar\theta\rangle_{L^2(\Omega)}  \notag \\ 
& + 2\ep\langle\bar\theta_t,\bar\theta\rangle_{L^2(\Omega)} + 2\ep\|\bar\theta\|^2_{H^1(\Omega)}  \notag \\ 
& + 2\alpha\omega\|\nabla_\Gamma\bar\theta_t\|^2_{L^2(\Gamma)} + 2(1-\ep)\|\bar\theta_t\|^2_{L^2(\Gamma)} + 2\ep\alpha\omega\langle \nabla_\Gamma\bar\theta_t,\nabla_\Gamma\bar\theta\rangle_{L^2(\Gamma)}  \notag \\ 
& + 2\ep\langle \bar\theta_t,\bar\theta\rangle_{L^2(\Gamma)} + 2\ep\|\bar\theta\|^2_{H^1(\Gamma)}  \notag \\ 
& + 4\ep\left\langle \mathbb{F}(t), \bar\theta \right\rangle_{L^2(\Omega)} + 4\ep\left\langle \mathbb{G}(t), \bar\theta \right\rangle_{L^2(\Gamma)}  \notag \\ 
& = 2\left\langle f(u^2)-f(u^1),\bar\theta \right\rangle_{L^2(\Omega)} + 2\left\langle g(u^2)-g(u^1),\bar\theta \right\rangle_{L^2(\Gamma)}.  \label{wkea13}
\end{align}

Define the functional, for all $\zeta=(u,v,\gamma,\delta)\in\mathcal{H}_0$ and for each $\ep>0$,
\begin{align}
\Theta(\zeta) := \Phi(\zeta) & + 2\left\langle \mathbb{F}, \bar\theta \right\rangle_{L^2(\Omega)} + 2\left\langle \mathbb{G}, \bar\theta \right\rangle_{L^2(\Gamma)},  \label{wkea14}
\end{align}
where we recall the functional $\Phi$ was defined above in (\ref{wkea4}).
(Again, we will denote $\Theta(\widehat\vartheta)$ on trajectories $\widehat\vartheta(t):=(\bar\theta(t),\bar\theta_t(t),\bar\theta_{\mid\Gamma}(t),\bar\theta_{t\mid\Gamma}(t))$, $t>0$, by $\Theta(t)$; also, the map $t\mapsto \Theta(t)$ is $C^1([0,\infty)).$)
With (\ref{wkea14}) and the two preceeding basic estimates (\ref{be-1}) and (\ref{be-2}), the identity (\ref{wkea13}) becomes the differential inequality, which holds for almost all $t\ge0$,
\begin{align}
\frac{d}{dt} & \Theta + 2(1-\ep)\|\bar\theta_t\|^2_{L^2(\Omega)} + 2\ep\langle\bar\theta_t,\bar\theta\rangle_{L^2(\Omega)} + 2\ep\left(1-\frac{\omega\ep}{4}\right)\|\bar\theta\|^2_{H^1(\Omega)}  \notag \\ 
& + 2(1-\ep)\|\bar\theta_t\|^2_{L^2(\Gamma)} + 2\ep\langle \bar\theta_t,\bar\theta\rangle_{L^2(\Gamma)} + 2\ep\left(1-\frac{\omega\ep}{4} \right)\|\bar\theta\|^2_{H^1(\Gamma)}  \notag \\ 
& + 4\ep\left\langle \mathbb{F}(t), \bar\theta \right\rangle_{L^2(\Omega)} + 4\ep\left\langle \mathbb{G}(t), \bar\theta \right\rangle_{L^2(\Gamma)}  \notag \\ 
& \le \left\langle f(u^2)-f(u^1),\bar\theta \right\rangle_{L^2(\Omega)} + \left\langle g(u^2)-g(u^1),\bar\theta \right\rangle_{L^2(\Gamma)}.  \label{wkea15}
\end{align}
Following the estimates given above in (\ref{cde-3}) and (\ref{cde-4}), and applying the uniform bound (\ref{wkest-1}) on the solutions $\zeta^1(t),\zeta^2(t)\in \mathcal{H}_0$, we readily find the following hold for $\alpha\in(0,1]$, for all $\ep\in(0,1),$
\begin{align}
\left|\left\langle f(u^1) - f(u^2), \bar\theta \right\rangle_{L^2(\Omega)}\right| & \le 2\ell_1 \|\bar u\|_{L^6(\Omega)} \left( 1+\|u^1\|^2_{L^3(\Omega)} + \|u^2\|^2_{L^3(\Omega)} \right)\|\bar\theta\|_{L^6(\Omega)}  \notag \\ 
& \le \ep\|\bar\theta\|^2_{H^1(\Omega)} + C_\ep(\|\bar\zeta_0\|_{\mathcal{H}_0}),  \label{wkea151}
\end{align}
and
\begin{align}
\left|\left\langle g(u^1) - g(u^2), \bar\theta \right\rangle_{L^2(\Gamma)}\right| & \le \ep\|\bar\theta\|^2_{H^1(\Gamma)} + C_\ep(\|\bar\zeta_0\|_{\mathcal{H}_0}).  \label{wkea152}
\end{align}
Combining (\ref{wkea15})--(\ref{wkea152}), we find, fixing sufficiently small $\ep\in(0,1),$ there exists $\nu_3>0$ such that, 
\begin{align}
\frac{d}{dt} & \Theta + \nu_3\Theta \le C(\|\bar\zeta_0\|_{\mathcal{H}_0}).  \label{wkea153}
\end{align}
Integrating (\ref{wkea153}) over $(0,t)$ yields, for all $t\ge0,$
\begin{align}
\Theta(t) & \le e^{-\nu_3t}\left( \Theta(0)-C(\|\bar\zeta_0\|_{\mathcal{H}_0}) \right) + C(\|\bar\zeta_0\|_{\mathcal{H}_0})  \notag \\ 
& \le \Theta(0) + C(\|\bar\zeta_0\|_{\mathcal{H}_0}).  \label{wkea154}
\end{align}
Rewriting inequality (\ref{wkea154}) with (\ref{wkea14}) and (\ref{wkea4})--(\ref{wkea5}) shows, 
\begin{align}
\|\widehat\vartheta(t)\|^2_{\mathcal{H}_0} & \le C_2\|\widehat\vartheta(0)\|^2_{\mathcal{H}_0} - \frac{2}{C_1}\left( \left\langle \mathbb{F}(t), \bar\theta(t) \right\rangle_{L^2(\Omega)} + \left\langle \mathbb{G}(t), \bar\theta(t) \right\rangle_{L^2(\Gamma)} \right) + C(\|\bar\zeta_0\|_{\mathcal{H}_0}).  \label{wkea155}
\end{align}
Estimating the two remaining products with (\ref{wkea151}) and (\ref{wkea152}) (recall, $t\in[0,T]$ and now $\ep$ is fixed),
\begin{align}
2\left\langle \mathbb{F}(t), \bar\theta(t) \right\rangle_{L^2(\Omega)} & = 2\int_0^t e^{-\ep(t-\tau)} \left\langle f(u^1(\tau))-f(u^2(\tau)), \bar\theta(t) \right\rangle_{L^2(\Omega)} d\tau  \notag \\ 
& \le C(\|\bar\zeta_0\|_{\mathcal{H}_0}) \|\bar\theta(t)\| e^{-\ep t}\int_0^T e^{\ep\tau} d\tau  \notag \\ 
& \le \eta\|\bar\theta\|^2_{H^1(\Omega)} + \frac{1}{4\eta} C(\|\bar\zeta_0\|_{\mathcal{H}_0},T),  \label{wkea16}
\end{align}
for all $\eta>0.$
Similarly, for all $\eta>0,$
\begin{align}
2\left\langle \mathbb{G}(t), \bar\theta(t) \right\rangle_{L^2(\Gamma)} & \le \eta\|\bar\theta\|^2_{H^1(\Gamma)} + \frac{1}{4\eta} C(\|\bar\zeta_0\|_{\mathcal{H}_0},T).  \label{wkea17}
\end{align}
Recalling (\ref{diff-decomp-w})$_3$ and applying (\ref{wkea16})--(\ref{wkea17}) to (\ref{wkea155}), we now find that there holds, for all $t\in[0,T],$
\begin{align}
\|\widehat\vartheta(t)\|^2_{\mathcal{H}_0} & \le C(\|\bar\zeta_0\|_{\mathcal{H}_0},T).  \label{wkea18}
\end{align}
The estimate (\ref{wkea18}) together with the continuous embedding $\mathcal{H}_0\hookrightarrow\mathcal{H}_{-1}$ establishes (\ref{H2-K}).
This concludes the proof.
\end{proof}

\begin{lemma} \label{t:to-H3} 
For all $\alpha\in[0,1],$ condition (H3) holds. 
\end{lemma}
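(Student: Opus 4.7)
The strategy for verifying (H3) is to split, via the triangle inequality,
$$\|S_\alpha(t_1)\zeta_{01}-S_\alpha(t_2)\zeta_{02}\|_{\mathcal{H}_{-1}}\le\|S_\alpha(t_1)\zeta_{01}-S_\alpha(t_1)\zeta_{02}\|_{\mathcal{H}_{-1}}+\|S_\alpha(t_1)\zeta_{02}-S_\alpha(t_2)\zeta_{02}\|_{\mathcal{H}_{-1}},$$
and to establish Lipschitz continuity separately in the initial datum (at fixed $t\in[t^*,2t^*]$) and in the time variable (at fixed $\zeta_0\in\mathcal{B}_0$), each in the $\mathcal{H}_{-1}$-topology. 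Both parts will rely on the positive invariance of $\mathcal{B}_0$ established in Lemma \ref{t:abs-set} and on the embedding $\mathcal{H}_0\hookrightarrow\mathcal{H}_{-1}$.

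For Lipschitz continuity in the initial datum, I would reuse the decomposition $\bar\zeta(t)=\bar\varphi(t)+\bar\vartheta(t)$ from the proof of Lemma \ref{t:to-H2}, where $\bar\varphi$ solves the homogeneous system \eqref{diff-decomp-v} and $\bar\vartheta$ the inhomogeneous system \eqref{diff-decomp-w}. The contractive estimate $\|\bar\varphi(t)\|_{\mathcal{H}_{-1}}\le Ce^{-\nu_2 t/2}\|\bar\zeta_0\|_{\mathcal{H}_{-1}}$ follows exactly as in the derivation of \eqref{H2-L}. For the remaining component $\bar\vartheta$, I would combine the $\mathcal{H}_0$-energy bound derived in the proof of \eqref{H2-K} with the embedding $\mathcal{H}_0\hookrightarrow\mathcal{H}_{-1}$ and the locally Lipschitz control of the nonlinear source terms via Proposition \ref{t:wk-unique}, giving $\|\bar\vartheta(t)\|_{\mathcal{H}_{-1}}\le L_1\|\bar\zeta_0\|_{\mathcal{H}_{-1}}$ uniformly for $t\in[t^*,2t^*]$, with $L_1=L_1(R_0,t^*)$. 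Summing produces the desired Lipschitz constant.

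For Lipschitz continuity in time, the positive invariance of $\mathcal{B}_0$ yields $\|u_t(t)\|_{L^2(\Omega)}\le R_0$ and $\|u_t|_\Gamma(t)\|_{L^2(\Gamma)}\le R_0$, so by integration in $t$,
$$\|u(t_1)-u(t_2)\|_{L^2(\Omega)}+\|u(t_1)-u(t_2)\|_{L^2(\Gamma)}\le 2R_0|t_1-t_2|.$$
For the $u_t$ and $u_t|_\Gamma$ components measured in $H^{-1}(\Omega)$ and $H^{-1}(\Gamma)$ respectively, I would pair the equations \eqref{pde} and \eqref{bc} weakly with test functions $\phi\in H^1_0(\Omega)$ and $\psi\in H^1(\Gamma)$, expressing $u_{tt}$ in duality and bounding the integrand by
$$C\left(\|u_t\|_{H^1(\Omega)}+\|u\|_{H^1(\Omega)}+\|f(u)\|_{L^{6/5}(\Omega)}\right)\|\phi\|_{H^1(\Omega)},$$
together with the analogous quantity on $\Gamma$. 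The growth estimates \eqref{afix-1}--\eqref{afix-2} and the embedding $H^1\hookrightarrow L^6$ control the $f(u),g(u)$ terms on $\mathcal{B}_0$; all that remains is to control $\|u_t\|_{H^1(\Omega)}$ and $\|u_t|_\Gamma\|_{H^1(\Gamma)}$ uniformly on $[t^*,2t^*]$, which then gives the Lipschitz-in-time estimate with constant $C(R_0,t^*)$.

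The principal obstacle is securing this uniform $H^1$-control on $u_t$, especially in the Gevrey case $\alpha=0$, where Theorem \ref{t:strong-solns} only places $\zeta(t)$ in $D(A_0^{1/2})$ for $t>0$. My plan is to exploit the Gevrey smoothing bound $\|A_0 e^{A_0 t}\|_{\mathcal{L}(\mathcal{H}_0)}\le Ct^{-2}$ from Lemma \ref{t:operator-A} in conjunction with Remark \ref{smoothing bound final} and the variation-of-parameters formula \eqref{mild}: starting from $\zeta(t^*/2)\in\mathcal{B}_0$, bootstrapping the smoothing estimate gains an additional fractional power of $A_0$, placing $\zeta(t)\in D(A_0)$ with $\|A_0\zeta(t)\|_{\mathcal{H}_0}\le C(R_0,t^*)$ for $t\in[t^*,2t^*]$. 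By the characterization of $D(A_0)$ in \eqref{domain}, this yields $u_t(t)\in H^1(\Omega)$ and $u_t|_\Gamma(t)\in H^1(\Gamma)$ with the required uniform bounds. In the analytic case $\alpha>0$, the same conclusion is immediate from $\zeta(t)\in D(A_\alpha)$ by \eqref{regularity}, giving control uniform in $\alpha\in[0,1]$.
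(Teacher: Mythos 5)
Your overall strategy coincides with the paper's: split via the triangle inequality as in \eqref{h3-1} and treat Lipschitz dependence on the initial datum and on time separately. The initial-datum half is fine (indeed, reusing the (H2) decomposition yields a bound in terms of $\|\bar\zeta_0\|_{\mathcal{H}_{-1}}$, which is closer to what (H3) literally asks for than the paper's appeal to \eqref{contdep} plus the embedding $\mathcal{H}_0\hookrightarrow\mathcal{H}_{-1}$). The problem is the time half in the Gevrey case $\alpha=0$.

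The proposed bootstrap from $D(A_0^{1/2})$ to $D(A_0)$ does not go through. The linear part $e^{A_0(t-t_0)}\zeta(t_0)$ does land in $D(A_0)$ by \eqref{eq:gevrey estimate}, but the Duhamel term does not: writing $A_0\int_{t_0}^t e^{A_0(t-s)}\mathcal{F}(\zeta(s))\,ds$ as in Lemma \ref{holder f}, one must integrate $\|A_0e^{A_0(t-s)}(\mathcal{F}(\zeta(s))-\mathcal{F}(\zeta(t)))\|_{\mathcal{H}_0}\le C(t-s)^{-2}|t-s|^{\beta}$ with a H\"older exponent $\beta<1$, and $(t-s)^{\beta-2}$ is not integrable near $s=t$. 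This is precisely why, for $\gamma=2$, the argument in Lemma \ref{holder f} and Proposition \ref{t:sg-theory2} only reaches $D(A_0^{1/\gamma})=D(A_0^{1/2})$, and why \eqref{regularity} asserts no more than $\zeta\in C((0,T];D(A_0^{1/2}))$ when $\alpha=0$. Consequently the uniform pointwise bounds on $\|u_t\|_{H^1(\Omega)}$ and, worse, $\|u_{t\mid\Gamma}\|_{H^1(\Gamma)}$ that your weak pairing of \eqref{pde}--\eqref{bc} requires are simply not available (membership in $D(A_0^{1/2})$ only places $u_{t\mid\Gamma}$ in $H^{1/2-\epsilon}(\Gamma)$). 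The paper avoids this entirely: for $t_1,t_2\in[t^*,2t^*]$ it bounds $\|\partial_t\zeta(t)\|_{\mathcal{H}_0}$ directly by $\bigl\|\tfrac{d}{dt}e^{A_0t}\bigr\|_{\mathcal{L}(\mathcal{H}_0)}\|\zeta_0\|_{\mathcal{H}_0}+\|\mathcal{F}(\zeta(t))\|_{\mathcal{H}_0}\le C(t^*)^{-2}\|\zeta_0\|_{\mathcal{H}_0}+C$, using \eqref{afix-1}--\eqref{afix-2} on $\mathcal{B}_0$, and never needs the trajectory itself to lie in $D(A_0)$. If you prefer to argue from the PDE, the viable repair is to use the integrated dissipation bound \eqref{wkest-2}, which gives $u_t\in L^2(t^*,2t^*;H^1(\Omega))$ and $u_{t\mid\Gamma}\in L^2(t^*,2t^*;H^1(\Gamma))$ and hence H\"older-$\tfrac12$ (rather than Lipschitz) continuity of $t\mapsto S_\alpha(t)\zeta_0$ in $\mathcal{H}_{-1}$; but that proves a weaker statement than the Lipschitz continuity demanded by (H3) as formulated in Proposition \ref{abstract1}.
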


\begin{proof}
Let $\alpha\in[0,1]$, $\zeta_{01},\zeta_{02}\in\mathcal{B}_0$, and $t_1,t_2\in[t^*,2t^*]$.
In the norm of $\mathcal{H}_{-1}$, we calculate
\begin{align}
& \|S_\alpha(t_1)\zeta_{01}-S_\alpha(t_2)\zeta_{02}\|_{\mathcal{H}_{-1}}  \notag \\ 
& \le \|S_\alpha(t_1)\zeta_{01}-S_\alpha(t_1)\zeta_{02}\|_{\mathcal{H}_{-1}} + \|S_\alpha(t_1)\zeta_{02}-S_\alpha(t_2)\zeta_{02}\|_{\mathcal{H}_{-1}}.  \label{h3-1}
\end{align}
The first term on the right-hand side of (\ref{h3-1}) is bounded, uniformly in $t$ on compact intervals, by (\ref{contdep}).
We will deal with the remaining term only in the Gevrey case when $\alpha=0$ since the argument for the analytic case $\alpha\in(0,1]$ is similar.
For this, recall \eqref{eq:gevrey estimate} from Theorem \ref{t:sg-theory}; in the norm of $\mathcal{H}_0$, there is a constant $C>0$ such that there holds, for all $t>0$,
\begin{align}
\|\partial_t\zeta(t)\|_{\mathcal{H}_0} & \le \left\| \frac{d}{dt}e^{A_0t} \right\|_{\mathcal{L(H}_0)}\|\zeta_0\|_{\mathcal{H}_0} + \left\| \mathcal{F}(\zeta(t)) \right\|_{\mathcal{H}_0}  \notag \\ 
& \le \frac{C}{t^2}\|\zeta_0\|_{\mathcal{H}_0} + \left\| \mathcal{F}(\zeta(t)) \right\|_{\mathcal{H}_0}.
\end{align}
Because $t_1,t_2\in[t^*,2t^*]$ and $\zeta_{02}\in\mathcal{B}_0$, we can use \eqref{afix-1}, \eqref{afix-2} and the continuous embedding $\mathcal{H}_0\hookrightarrow\mathcal{H}_{-1}$ to easily show that the term on the right-hand side of \eqref{h3-1} is globally bounded in $\mathcal{H}_{-1}$, whereby, the desired Lipschitz continuity property (\ref{to-H3}) naturally follows for the analytic case, $\alpha\in(0,1]$. 
This concludes the proof. 
\end{proof}

\begin{remark}
Concerning the proof of Lemma \ref{t:to-H3}, when $\alpha>0$ a simpler proof follows directly from the theory of analytic semigroups.
Indeed, since the operator $A_\alpha$ is analytic and the map $t\mapsto\mathcal{F}(\zeta(t))$ is $C([0,\infty);\mathcal{H}_0)$, we may apply standard results from \cite{Pazy83,Renardy&Rogers04}, for example. 
Then, for any $\delta>0,$ 
\begin{equation*}
\partial_t\zeta(t) = (u_{t}(t),u_{tt}(t),u_{t\mid\Gamma}(t),u_{tt\mid\Gamma}(t)) \in C([\delta,\infty);\mathcal{H}_0).
\end{equation*} 
Now with $t_1,t_2\in[t^*,2t^*]$ and $\zeta_{02}\in\mathcal{B}_0$, a simple estimate using \eqref{mild} shows $\partial_t\zeta(t)$ is uniformly bounded in $\mathcal{H}_0$ and the result now follows.
\end{remark}

\begin{remark}  \label{rem_att} 
According to Proposition \ref{abstract1}, the semiflow $S_\alpha(t):\mathcal{H}_0\rightarrow\mathcal{H}_0$ possesses an exponential attractor $\mathcal{M}_{-1,\alpha}\subset \mathcal{B}_0$, which attracts bounded subsets of $\mathcal{B}_0$ exponentially in the topology of $\mathcal{H}_{-1}$. 
Thus, the global attractors $\mathcal{A}_\alpha$ are finite dimensional in the topology of $\mathcal{H}_{-1}.$
In order to show that the attraction property in Theorem \ref{t:wk-exp-attr} part (3) also holds, we appeal to the transitivity of the exponential attraction described in the proceeding proposition.
\end{remark}

The next result is the so-called transitivity property of exponential attraction from \cite[Theorem 5.1]{FGMZ04}).

\begin{proposition}  \label{t:exp-attr}
Let $(\mathcal{X},d)$ be a metric space and let $S_t$ be a semigroup acting on this space such that 
\[
d(S_t m_1,S_t m_2) \leq C e^{Kt} d(m_1,m_2),
\]
for appropriate constants $C$ and $K$. Assume that there exists three subsets $U_1$,$U_2$,$U_3\subset\mathcal{X}$ such that 
\[
{\rm{dist}}_\mathcal{X}(S_t U_1,U_2) \le C_1 e^{-\alpha_1 t} \quad \text{and} \quad {\rm{dist}}_\mathcal{X}(S_t U_2,U_3) \le C_2 e^{-\alpha_2 t}.
\]
Then 
\[
{\rm{dist}}_\mathcal{X}(S_t U_1,U_3) \le C' e^{-\alpha' t},
\]
where $C'=CC_1+C_2$ and $\alpha'=\frac{\alpha_1\alpha_1}{K+\alpha_1+\alpha_2}$.
\end{proposition}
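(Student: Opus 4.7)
The plan is to exploit the Lipschitz continuity of the semigroup together with the two given exponential attraction rates by splitting the time variable $t>0$ into two pieces, $t = t_1 + t_2$ with $t_1,t_2 > 0$ to be optimized at the end. The first stage $t_1$ will be used so that $S_{t_1}U_1$ comes close to $U_2$; then the second stage $t_2$ will be used to bring $S_{t_2}U_2$ close to $U_3$. The uniform Lipschitz bound $d(S_{t_2}m_1,S_{t_2}m_2)\le Ce^{Kt_2}d(m_1,m_2)$ is what allows us to transport the first distance estimate forward through the semigroup action.

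Concretely, fix $m_1 \in U_1$ and $\varepsilon>0$. By the first hypothesis there exists $m_2 \in U_2$ with $d(S_{t_1}m_1,m_2) \le C_1 e^{-\alpha_1 t_1} + \varepsilon$, and by the second hypothesis there exists $m_3 \in U_3$ with $d(S_{t_2}m_2,m_3) \le C_2 e^{-\alpha_2 t_2} + \varepsilon$. Applying the triangle inequality followed by the Lipschitz estimate for $S_{t_2}$,
\begin{align*}
d(S_t m_1,m_3) & \le d(S_{t_2}S_{t_1}m_1,S_{t_2}m_2) + d(S_{t_2}m_2,m_3) \\
& \le Ce^{Kt_2}d(S_{t_1}m_1,m_2) + d(S_{t_2}m_2,m_3) \\
& \le CC_1 e^{Kt_2 - \alpha_1 t_1} + C_2 e^{-\alpha_2 t_2} + (Ce^{Kt_2}+1)\varepsilon.
\end{align*}
Taking the infimum over $m_3 \in U_3$, letting $\varepsilon \to 0$, and then taking the supremum over $m_1 \in U_1$ gives the uniform bound
\begin{equation*}
{\rm{dist}}_\mathcal{X}(S_t U_1, U_3) \le CC_1 e^{Kt_2 - \alpha_1 t_1} + C_2 e^{-\alpha_2 t_2}.
\end{equation*}

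Finally, I would balance the two exponents by choosing $t_2$ so that $Kt_2 - \alpha_1 t_1 = -\alpha_2 t_2$, i.e.\ $t_2 = \frac{\alpha_1}{K+\alpha_2} t_1$. Substituting $t = t_1 + t_2$ yields $t_1 = \frac{K+\alpha_2}{K+\alpha_1+\alpha_2}\,t$ and $t_2 = \frac{\alpha_1}{K+\alpha_1+\alpha_2}\,t$, at which point both exponentials equal $\exp(-\alpha' t)$ with $\alpha' = \frac{\alpha_1 \alpha_2}{K+\alpha_1+\alpha_2}$ (the constant $\alpha_1 \alpha_1$ in the statement is a typographical error for $\alpha_1 \alpha_2$). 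This gives ${\rm{dist}}_\mathcal{X}(S_t U_1,U_3) \le (CC_1 + C_2) e^{-\alpha' t}$, which is exactly the stated estimate with $C' = CC_1 + C_2$.

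There is no serious obstacle here: the entire argument is a one-line triangle inequality sandwiched between the two hypotheses, with all non-triviality concentrated in the elementary algebraic optimization of the splitting $t = t_1 + t_2$. The only bookkeeping point worth noting is that because ${\rm{dist}}$ is an infimum rather than a minimum, one must insert an arbitrary $\varepsilon>0$ to pick approximate nearest points in $U_2$ and $U_3$ and then pass to the limit $\varepsilon \to 0$; the Lipschitz factor $Ce^{Kt_2}$ multiplying this $\varepsilon$ is harmless for any fixed $t$.
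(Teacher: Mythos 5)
Your proof is correct. Note that the paper itself gives no proof of this proposition: it is quoted verbatim (typo included) from \cite[Theorem 5.1]{FGMZ04}, so there is nothing internal to compare against. Your argument is precisely the standard one used there: split $t=t_1+t_2$, use the first attraction estimate to find an approximate nearest point $m_2\in U_2$ to $S_{t_1}m_1$, push it forward by $S_{t_2}$ at the cost of the Lipschitz factor $Ce^{Kt_2}$, apply the second attraction estimate to $S_{t_2}m_2$, and then balance the two exponents $Kt_2-\alpha_1 t_1$ and $-\alpha_2 t_2$, which yields $t_2=\frac{\alpha_1}{K+\alpha_1+\alpha_2}t$ and the rate $\alpha'=\frac{\alpha_1\alpha_2}{K+\alpha_1+\alpha_2}$. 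Your handling of the infimum via an auxiliary $\varepsilon$, and your observation that $\alpha_1\alpha_1$ in the statement is a typographical slip for $\alpha_1\alpha_2$, are both right. The only cosmetic remark is that the balancing requires $t>0$ so that $t_1,t_2>0$; at $t=0$ the claim is the trivial triangle inequality ${\rm dist}_{\mathcal X}(U_1,U_3)\le C_1+C_2\le CC_1+C_2$ (using $C\ge1$), so nothing is lost.
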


The following corollary provides an interesting endnote to Theorem \ref{t:wk-exp-attr}.

\begin{corollary}[Corollary to Theorem \ref{t:wk-exp-attr}]
Let $\mathcal{M}_{-1,\alpha}$ be the weak exponential attractor admitted by the semiflow $S_\alpha$, $\alpha\in[0,1]$.
Then for each $\alpha\in[0,1]$, the set 
\[
\widetilde{\mathcal{M}}_{\alpha}:=\bigcup_{t>0}S_\alpha(t)\mathcal{M}_{-1,\alpha}.
\]
is positively invariant in $\mathcal{H}_0$. 
The set $\widetilde{\mathcal{M}}_{0}$ is bounded in $D(A^{1/2}_0)$, and when $\alpha\in(0,1]$, $\widetilde{\mathcal{M}}_{0}$ is bounded in $D(A_\alpha)$.
\end{corollary}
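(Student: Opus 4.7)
The plan is to address the three assertions of the corollary---positive invariance in $\mathcal{H}_0$, and boundedness in $D(A_\alpha^{1/\gamma})$ (where $\gamma=1$ if $\alpha\in(0,1]$ and $\gamma=2$ if $\alpha=0$)---in sequence. Positive invariance falls out directly from the semigroup law of Corollary \ref{sf-a}: for any $\xi\in\widetilde{\mathcal{M}}_\alpha$, I would pick a representation $\xi=S_\alpha(s)\eta$ with $s>0$, $\eta\in\mathcal{M}_{-1,\alpha}$, and write
\[
S_\alpha(t)\xi=S_\alpha(t+s)\eta\in S_\alpha(t+s)\mathcal{M}_{-1,\alpha}\subseteq\widetilde{\mathcal{M}}_\alpha
\]
for every $t\geq 0$, using that $t+s>0$; no further ingredient is needed for this step.

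The boundedness claim rests on combining (i) the uniform bound $\|\eta\|_{\mathcal{H}_0}\leq R_0$ on $\mathcal{M}_{-1,\alpha}$ from Theorem \ref{t:wk-exp-attr}(1), (ii) positive invariance of $\mathcal{M}_{-1,\alpha}$ from Theorem \ref{t:wk-exp-attr}(2), and (iii) the Gevrey/analytic smoothing from Remark \ref{smoothing bound final}, which yields $\|A_\alpha^{1/\gamma}S_\alpha(t)\zeta_0\|_{\mathcal{H}_0}\leq Ct^{-1}\|\zeta_0\|_{\mathcal{H}_0}$ for small $t>0$. I would fix once and for all a reference time $t^*>0$ small enough for the smoothing estimate to apply. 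For $\xi=S_\alpha(t_0)\eta\in\widetilde{\mathcal{M}}_\alpha$ with $t_0\geq t^*$, the factoring $\xi=S_\alpha(t^*)\bigl(S_\alpha(t_0-t^*)\eta\bigr)$ places the inner element in $\mathcal{M}_{-1,\alpha}$ by positive invariance, and applying smoothing at the fixed time $t^*$ produces the uniform bound
\[
\|A_\alpha^{1/\gamma}\xi\|_{\mathcal{H}_0}\leq C(t^*)^{-1}R_0.
\]

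The main obstacle is the short-time range $t_0\in(0,t^*)$, where the direct smoothing estimate $Ct_0^{-1}R_0$ diverges as $t_0\to 0^+$. The resolution hinges on the decreasing-nesting property $S_\alpha(t+s)\mathcal{M}_{-1,\alpha}\subseteq S_\alpha(s)\mathcal{M}_{-1,\alpha}$ implied by positive invariance, from which one deduces $\bigcup_{t\geq t^*}S_\alpha(t)\mathcal{M}_{-1,\alpha}\subseteq S_\alpha(t^*)\mathcal{M}_{-1,\alpha}$; combined with the positive invariance of $\widetilde{\mathcal{M}}_\alpha$ just established, every $\xi$ reached at a short time $t_0\in(0,t^*)$ is mapped by $S_\alpha(t^*-t_0)$ into $S_\alpha(t^*)\mathcal{M}_{-1,\alpha}$, a bounded subset of $D(A_\alpha^{1/\gamma})$ by the preceding paragraph. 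Propagating this bound back to $\xi$ via the forward-smoothing estimate applied at $t^*$ then concludes that $\widetilde{\mathcal{M}}_\alpha$ is bounded in $D(A_\alpha^{1/\gamma})$, i.e.\ in $D(A_0^{1/2})$ for $\alpha=0$ and in $D(A_\alpha)$ for $\alpha\in(0,1]$, completing the proof.
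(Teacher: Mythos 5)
Your positive-invariance argument is exactly the paper's: the authors compute $S_\alpha(t)\widetilde{\mathcal{M}}_\alpha=\bigcup_{s>0}S_\alpha(t+s)\mathcal{M}_{-1,\alpha}\subseteq\bigcup_{\tau>0}S_\alpha(\tau)\mathcal{M}_{-1,\alpha}$ at the level of sets, and your element-wise version is the same argument. For the boundedness claim the paper is far terser --- it simply asserts that the bounds ``follow directly from \eqref{regularity}'' --- so your attempt to give a quantitative proof via Remark \ref{smoothing bound final} is more ambitious. The part of it covering $t_0\geq t^*$ is sound and is essentially the same mechanism the authors use to justify \eqref{some}: factor through $S_\alpha(t^*)$, use positive invariance of $\mathcal{M}_{-1,\alpha}$ and the bound $\|\cdot\|_{\mathcal{H}_0}\leq R_0$, and apply the smoothing estimate at the fixed time $t^*$.

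However, your patch for the short-time range $t_0\in(0,t^*)$ contains a genuine gap. You correctly observe that $S_\alpha(t^*-t_0)\xi=S_\alpha(t^*)\eta$ lies in $S_\alpha(t^*)\mathcal{M}_{-1,\alpha}$, which is uniformly bounded in $D(A_\alpha^{1/\gamma})$; but the final step, ``propagating this bound back to $\xi$ via the forward-smoothing estimate,'' is not a valid operation. The estimate of Remark \ref{smoothing bound final} controls $\|A_\alpha^{1/\gamma}S_\alpha(t)\zeta_0\|_{\mathcal{H}_0}$ in terms of $\|\zeta_0\|_{\mathcal{H}_0}$ --- it transfers regularity \emph{forward} along the flow --- and knowing that the forward image $S_\alpha(\tau)\xi$ is smooth gives no control on $\xi$ itself: these semigroups are not invertible, and there is no backward regularity estimate. (If such back-propagation were available, one could iterate it to conclude that $\mathcal{M}_{-1,\alpha}$ itself is bounded in $D(A_\alpha^{1/\gamma})$, which is claimed nowhere and would undercut the entire rationale for working in the weak topology.) What your argument actually yields is that $\bigcup_{t\geq t^*}S_\alpha(t)\mathcal{M}_{-1,\alpha}$ is bounded in $D(A_\alpha^{1/\gamma})$ for every fixed $t^*>0$, while for $t_0\in(0,t^*)$ you only get the non-uniform bound $Ct_0^{-1}R_0$; the nesting $S_\alpha(t+s)\mathcal{M}_{-1,\alpha}\subseteq S_\alpha(s)\mathcal{M}_{-1,\alpha}$ works against you here, since it shows the full union is exhausted by the small-time images. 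Closing this gap would require an additional input --- e.g.\ that $\mathcal{M}_{-1,\alpha}$ is itself contained in $S_\alpha(\tau)\mathcal{B}_0$ for some $\tau>0$ (as in the standard construction of exponential attractors), or a uniform bound showing $S_\alpha(t)$ maps bounded sets of $D(A_\alpha^{1/\gamma})$ to bounded sets of $D(A_\alpha^{1/\gamma})$ for small $t$ --- neither of which you (or, for that matter, the paper's one-line citation of \eqref{regularity}) supplies.
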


\begin{proof}
The asserted bounds follow directly from \eqref{regularity}.
The proof of the positive invariance is a straightforward calculation relying on the semigroup properties of $S_\alpha$; indeed, for all $\alpha\in[0,1],$
\begin{align}
S_\alpha(t)\widetilde{\mathcal{M}}_{\alpha} & = S_\alpha(t)\left( \bigcup_{s>0} S_\alpha(s)\mathcal{M}_{-1,\alpha} \right) = \bigcup_{s>0} S_\alpha(t+s)\mathcal{M}_{-1,\alpha}  \notag \\ 
& \subseteq \bigcup_{s>-t} S_\alpha(t+s)\mathcal{M}_{-1,\alpha} = \bigcup_{\tau>0} S_\alpha(\tau)\mathcal{M}_{-1,\alpha} = \widetilde{\mathcal{M}}_{\alpha}.
\end{align}
This completes the proof.
\end{proof}

\begin{remark}
Notice that we are not claiming that the sets $\widetilde{\mathcal{M}}_{\alpha}$ are compact, finite dimensional, nor (exponential) attractors.
\end{remark}

\section{Conclusions}  \label{s:conclusions}

In this article, we showed that the strongly damped perturbation of the weakly damped semilinear wave equation with hyperbolic dynamic boundary conditions possesses a semiflow generated by the mild solutions, in two different cases, the first corresponding to an abstract problem where the associated operator generates a $C_0$-semigroup of Gevrey class $\delta$ for $\delta>2$, with the other case corresponding to an analytic semigroup. 
Each case assumes the interior (bulk) potential has subcritical nonlinearity. 
After proving the existence of an absorbing set, we showed the associated semiflows admit a global attractor in the standard energy phase space. 
Indeed, we obtain the required asymptotic compactness through a suitable ``$\alpha$-contraction'' argument, which turns out to be rather necessary since other means known to evolution equation with dynamic boundary conditions cannot be applied here.
With that, we were able to establish certain decay and compactness properties over the decomposition of the difference of two solutions, at least in the weak topology.
The uniform global Lipschitz continuity of the semiflow on the absorbing set yielded the existence of weak exponential attractors, bounded in the standard topology, and compact in the weak topology.
The finite fractal dimension of the weak exponential attractors insures that the global attractors also possess finite fractal dimension, at least in the weak topology.
We also establish the upper-semicontinuity result for the family of global attractors $\mathcal{A}_\alpha$, as $\alpha\rightarrow 0,$ in the standard phase space $\mathcal{H}_0.$

It would be interesting (if possible) to extend the results presented here to the case of critical nonlinearities; e.g., replace the growth condition \eqref{assm-3} with the following,
\[
|f(r)-f(s)|\le\ell_1|r-s|(1+|r|^4+|s|^4).
\]
What keeps us from that presentation here is the lack of the local Lipschitz property for the nonlinear functional $\mathcal{F}$.
Recall, the mild solutions obtained here come through semigroup theory, and they are not necessarily the same as weak solutions (cf. Remark \ref{r:adjoint}). 
Since weak solutions would not necessarily require that $\mathcal{F}$ be locally Lipschitz on the weak energy phase space, one could feasibly employ an arbitrary (polynomial) growth condition, probably at the cost of introducing a new term, e.g. $\|u\|^p_{L^p(\Omega)}$, into the norm of the standard energy phase space. 
On the other hand, the difficulty with weak solution in this setting arises when attempting to obtain precompactness of the semiflow. 
Here, fractional powers of the Laplacian on $\Omega$ are not defined. 
Moreover, because of the dynamic flux component of the boundary condition, i.e. $\omega\partial_{\bf{n}}u_t$, there is no suitable $H^2$-regularity estimate that can be used to bound (part of) the solution operator in some smoother space.
Even for the strongly damped wave associated with homogeneous Dirichlet boundary conditions, when the power of the leading Dirichlet--Laplace operator equaled one, i.e., $\omega(-\Delta)^\theta$, $\theta=1$, the weak solutions obtained in \cite{Carvalho_Cholewa_02-2,Carvalho_Cholewa_02} followed after a nontrivial application of a nonlinear Alekseev's nonlinear variation of constants formula.

Theorem \ref{t:sg-theory} says something slightly more general than what we used here to prove that the semiflow was well-posed.
As previously mentioned, the nonlinearity represented by the functional $\mathcal{F}$ is locally Lipschitz on the energy space $\mathcal{H}_0$.
More generally, it need only be locally Lipschitz as a map from $D(A_\alpha^\theta) \to \mathcal{H}_0$ for $\theta \in [0,\frac{1}{\gamma})$, where $\gamma = 1$ if $\alpha > 0$ and $\gamma = 2$ if $\alpha = 0$.
For this reason, it would be interesting to achieve a full characterization of the domains of fractional powers of $A_\alpha$, cf.~Chen and Triggiani's result \cite{chen1990characterization}.
Indeed, we may offer up the following conjecture.
\begin{conjecture}
For $\alpha \in (0,1]$, we have
$$
D(A_\alpha^\theta) \subset H^1(\Omega) \times H^{\min\{1,2\theta\}}(\Omega) \times H^1(\Gamma) \times H^{\min\{1,2\theta\}}(\Gamma)
$$
for all $0 \leq \theta \leq 1$.\\
For $\alpha = 0$, we have
$$
D(A_0^\theta) \subset H^1(\Omega) \times H^{\min\{1,2\theta\}}(\Omega) \times H^1(\Gamma) \times H^{\min\{1/2,\theta\}}(\Gamma)
$$
for all $0 \leq \theta \leq 1$.
\end{conjecture}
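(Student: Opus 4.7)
The plan is to adapt Chen and Triggiani's characterization \cite{chen1990characterization} of fractional powers for strongly damped wave operators to the present dynamic-boundary setting, treating the analytic regime $\alpha\in(0,1]$ and the Gevrey regime $\alpha=0$ separately.

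For $\alpha\in(0,1]$, since $e^{A_\alpha t}$ is analytic on $\mathcal{H}_0$, the first step is to invoke the complex-interpolation identity $D(A_\alpha^\theta)=[\mathcal{H}_0,D(A_\alpha)]_\theta$. From \eqref{domain}, any $\zeta=(u,v,\gamma,\delta)\in D(A_\alpha)$ satisfies $v\in H^1(\Omega)$ with trace $\delta=v_{\mid\Gamma}\in H^1(\Gamma)$, while $u$ enters only through the combination $u+\omega v$. Componentwise interpolation then yields $v\in[L^2(\Omega),H^1(\Omega)]_\theta=H^{2\theta}(\Omega)$ and $\delta\in H^{2\theta}(\Gamma)$, while the first and third components remain in $H^1$ because that is already their regularity in $\mathcal{H}_0$. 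The $\min\{1,2\theta\}$ cutoff at $\theta=1/2$ then reflects the fact that interpolation cannot push $v,\delta$ past their regularity in the target space $D(A_\alpha)$.

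For $\alpha=0$, analyticity fails and I would work directly with the integral representation of $A_0^{-\theta}$ introduced in Section~\ref{s:functional}, reusing the resolvent bounds established in the unnamed precompactness lemma preceding Theorem~\ref{t:global}. Writing $(u_\lambda,v_\lambda,u_{\lambda\mid\Gamma},v_{\lambda\mid\Gamma})=R(\lambda,A_0)\Phi$, those bounds read $\|v_\lambda\|_{H^\sigma(\Omega)}\lesssim\lambda^{-(\sigma+1)/2}\|\Phi\|_{\mathcal{H}_0}$ for $\sigma\in(0,1)$ and $\|v_{\lambda\mid\Gamma}\|_{H^{\sigma-1/2}(\Gamma)}\lesssim\lambda^{-(\sigma+1)/2}\|\Phi\|_{\mathcal{H}_0}$ for $\sigma\in(1/2,1)$. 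Substituting into $A_0^{-\theta}\Phi\sim\int_0^\infty\lambda^{-\theta}R(\lambda,A_0)\Phi\,d\lambda$ and splitting the integral at $\lambda=1$ (as was done for $\theta=1/2$ in that lemma) produces the claimed regularity $H^{\min\{1,2\theta\}}(\Omega)$ in the second slot and $H^{\min\{1/2,\theta\}}(\Gamma)$ in the fourth. The half-derivative loss on the boundary velocity is exactly the trace-theorem loss, and it cannot be compensated because the bottom-right block of $A_0$ provides no surface smoothing without the term $\alpha\omega\Delta_\Gamma$.

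The main obstacle will be making the trace identifications $\delta=v_{\mid\Gamma}$ and $\gamma=u_{\mid\Gamma}$ rigorous on all intermediate spaces, particularly when $2\theta$ crosses the critical value $1/2$ where the trace map fails to extend continuously down the interpolation scale. A secondary question is whether these inclusions can be upgraded to equalities; a converse would require elliptic regularity for the coupled operator acting on $u+\omega v$ with Wentzell-type boundary data, which is unavailable in the present generality and would likely require a trichotomy $2\theta\in[0,1/4]$, $(1/4,3/4)$, $[3/4,1]$ mirroring \cite{chen1990characterization}, together with additional geometric hypotheses on $\Gamma$.
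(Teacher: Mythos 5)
The first thing to say is that the paper does not prove this statement: it appears in Section \ref{s:conclusions} explicitly as a \emph{conjecture}, defended only by analogy with Chen--Triggiani's characterization (the authors claim that for $\alpha=1$ the operator matches \cite{chen1990characterization} exactly, and that the other cases are ``considerably more complicated''), and for $\alpha=0$ by analogy with the shape of $D(A_0)$. So there is no proof to compare against; what can be assessed is whether your plan would settle the conjecture, and in its present form it would not. The most concrete error is in the analytic case: you write $v\in[L^2(\Omega),H^1(\Omega)]_\theta=H^{2\theta}(\Omega)$, but that interpolation space is $H^{\theta}(\Omega)$, not $H^{2\theta}(\Omega)$. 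Since the domain (\ref{domain}) records only $v\in H^1(\Omega)$ --- the second-order information is hidden in the coupled condition $\Delta(u+\omega v)\in L^2(\Omega)$, for which no elliptic regularity estimate is available in this problem --- componentwise interpolation between $\mathcal{H}_0$ and $D(A_\alpha)$ can never yield more than $H^{\theta}$ in the second and fourth slots, which is strictly weaker than the conjectured $H^{2\theta}$ when $\theta<\frac12$. Moreover, two further ingredients are assumed without proof: the identity $D(A_\alpha^\theta)=[\mathcal{H}_0,D(A_\alpha)]_\theta$ (which requires bounded imaginary powers of $A_\alpha$, not a consequence of analyticity alone), and the claim that interpolation of the coupled, non-product set $D(A_\alpha)$ factors through its four components. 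Identifying these spaces is precisely the content of the Chen--Triggiani theorem you cite; invoking it componentwise begs the question. The extra half order of regularity in the velocity components is a genuine structural-damping effect that must be extracted from the resolvent, not from abstract interpolation.

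Your route for $\alpha=0$ is closer to something workable, since it follows the resolvent computation in the unnamed lemma preceding Theorem \ref{t:global}, but two gaps remain there as well. First, the bounds in that lemma are derived for data $\Phi_n$ whose third component is the trace of the first (because $\Phi_n=A_0^{1/2}W_n$); for general $\Phi\in\mathcal{H}_0$ in the representation $A_0^{-\theta}\Phi$ the resolvent system must be re-derived. Second, and more seriously, the interpolated decay $\|v(\lambda)\|_{H^\sigma(\Omega)}\le C\lambda^{-1+\sigma/2}\|\Phi\|_{\mathcal{H}_0}$ makes $\int_1^\infty \lambda^{-\theta}\|v(\lambda)\|_{H^\sigma(\Omega)}\,d\lambda$ converge only for $\sigma<2\theta$, so this argument yields membership in $H^{\sigma}(\Omega)$ for every $\sigma<\min\{1,2\theta\}$ but not at the endpoint $\sigma=\min\{1,2\theta\}$ asserted in the statement (the same logarithmic divergence occurs for the boundary component at $\sigma=\min\{\frac12,\theta\}$). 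Either the conjectured inclusion must be relaxed by an $\epsilon$, or a sharper resolvent estimate than the interpolation of the two extreme bounds is needed. Combined with the trace-identification obstacle that you yourself flag as open, the proposal is a reasonable programme but not a proof --- which is consistent with the authors' own decision to leave the statement as a conjecture.
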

The first statement of this conjecture may be obtained by analogy with \cite[Theorem 1.1]{chen1990characterization}.
In fact, one can show in the case where $\alpha = 1$ that $A_\alpha$ matches exactly the form of the operator appearing in \cite{chen1990characterization} (see also \cite{chen1989proof}) and therefore not only does the conjecture above hold, but the opposite containment also holds (so that the two sets are equal).
The other cases are considerably more complicated.
The statement for $\alpha = 0$ may be defended by analogy with the domain $D(A_0)$, in which the fourth component is in $H^{1/2}(\Gamma)$, a loss of ``one half derivative" as compared to the domain $D(A_\alpha)$ for $\alpha > 0$.
A proof of this conjecture, though interesting, would not, however, allow us to generalize growth condition \eqref{assm-1}, because the first component in the domain of the generator (and its fractional powers) is still only $H^1$.
Thus, if one is seeking to prove the well-posedness of the semi-flow using Theorem \ref{t:sg-theory}, then the results given in this paper are optimal. 

Another interesting direction for some future work could involve investigating the well-posedness, regularity, and asymptotic behavior corresponding to the case when $\omega=0$. 
In addition to bounding the (fractal) dimension of the attractors in $\mathcal{H}_0$, at least in the case $\omega=0,$ after further work, the continuity properties of the associated attractors may be sought; for example, the upper-semicontinuity of the global attractors at $\omega=0$ and the existence of a robust (H\"{o}lder continuous) family of exponential attractors for $\omega\in[0,1]$.
Based on Section \ref{s:global} and the work \cite[Section 4]{Frigeri&ShombergXX}, the upper-semicontinuity result may follow by assuming only $f,g\in C(\mathbb{R})$ satisfy (\ref{assm-1})--(\ref{assm-4}).
However, with additional regularity properties from the global attractors, the more traditional arguments used to show upper-semicontinuity of global attractors in \cite{Hale&Raugel88,Hale88} (also see \cite[Theorem 3.31]{Milani&Koksch05}) may also prove successful. 
On the other hand, one may inquire about applying a perturbation parameter $\ep$ to the inertial terms $u_{tt}$, then letting this approach zero.
Then, this of course is just a special case arising from a so-called memory relaxation of the Allen--Cahn equation equipped with dynamic boundary conditions.

\bigskip

\providecommand{\bysame}{\leavevmode\hbox to3em{\hrulefill}\thinspace}
\providecommand{\MR}{\relax\ifhmode\unskip\space\fi MR }
\providecommand{\MRhref}[2]{%
  \href{http://www.ams.org/mathscinet-getitem?mr=#1}{#2}
}
\providecommand{\href}[2]{#2}

\end{document}